\documentclass{amsart}

\usepackage{amsmath}
\usepackage{amsfonts}
\usepackage{amssymb}
\usepackage{amsthm}
\usepackage{mathrsfs}
\usepackage{hyperref}
\usepackage{url}
\usepackage{quiver}
\usepackage[T1]{fontenc}

\theoremstyle{plain}
\newtheorem{theorem}{Theorem}[section]
\newtheorem{prop}[theorem]{Proposition}
\newtheorem{lem}[theorem]{Lemma}
\newtheorem{cor}[theorem]{Corollary}

\theoremstyle{definition}
\newtheorem{rem}[theorem]{Remark}
\newtheorem{defn}[theorem]{Definition}
\newtheorem{ex}[theorem]{Example}

\title{Moduli Spaces of Twisted Endomorphisms and Euler Characteristics}
\author{Atharva Korde}
\date{}

\begin{document}

\begin{abstract}
For a (quasi)projective variety $ X $ and an automorphism $ f $ of $ X $, we define moduli spaces of twisted endomorphisms of sheaves on $ X $. If the automorphism $ f $ has finite order, we construct a sheaf of noncommutative algebras $ \mathcal{A}_0(f) $ and show that sheaves with a twisted endomorphism can be described as $ \mathcal{A}_0(f)$-modules. We compute the topological Euler characteristics of Hilbert schemes of points of the algebra $ \mathcal{A}_0(f) $, generalizing the answer for the case where $ f $ is the identity automorphism.
\end{abstract}

\maketitle

\section{Introduction}

\subsection{Overview} 

In this paper, we introduce a notion of `twisted endomorphisms'. These are morphisms $ \phi : \mathcal{F} \to f^* \mathcal{F}$ for sheaves $ \mathcal{F}$ on a scheme equipped with an automorphism $ f$. We study the moduli spaces of pairs $ (\mathcal{F}, \phi) $ of a sheaf with a twisted endomorphism.

Our first main result, proved in Section 2, states that when $ f$ is of finite order, then the category of pairs $ (\mathcal{F}, \phi)$ is essentially the same as modules over a certain sheaf of noncommutative algebras.
\begin{theorem}
Let $ X$ be a quasi-projective variety and $ f : X \to X$ be an automorphism of finite order. Denote by $ \mathsf{(Q)Coh}_{X,f} $ the category whose objects are pairs $ (\mathcal{F}, \phi)$ where $ \mathcal{F}$ is a (quasi)coherent sheaf on $ X$ and $ \phi : \mathcal{F} \to f^* \mathcal{F}$ is a twisted endomorphism, and whose morphisms $ (\mathcal{F}, \phi) \to (\mathcal{F}', \phi')$ are morphisms $ \eta : \mathcal{F} \to \mathcal{F}'$ such that $ (f^*\eta) \phi = \phi \eta$. Then, there is a quasi-coherent sheaf of noncommutative algebras $ \mathcal{B}_0(f)$ on the quotient $ X/f $ and a coherent sheaf of noncommutative algebras $ \mathcal{A}_0(f)$ on $ X/f \times \mathbb{A}^1$ such that: \begin{itemize}
    \item The category $ \mathsf{(Q)Coh}(\mathcal{B}_0(f))$ of (quasi)coherent $ \mathcal{B}_0(f) $-modules and $ \mathsf{(Q)Coh}_{X,f} $ are equivalent.
    \item The category $ \mathsf{QCoh}(\mathcal{A}_0(f))$ of quasicoherent $ \mathcal{A}_0(f) $-modules is equivalent to $ \mathsf{QCoh}_{X,f} $. This equivalence restricts to an equivalence between $ \mathsf{Coh}(\mathcal{A}_0(f))^{pr} $ - the category of coherent $ \mathcal{A}_0(f)$-modules whose support is proper over $ X/f$, and $ \mathsf{Coh}_{X,f}  $.
\end{itemize} 
\end{theorem}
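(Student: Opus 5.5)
Let $n$ be the order of $f$, $G=\langle f\rangle\cong\mathbb{Z}/n$, and $\pi:X\to Y:=X/f$ the quotient map. Since $X$ is quasi-projective, $\pi$ is finite and $Y$ is quasi-projective, so pushforward identifies (quasi)coherent sheaves on $X$ with (quasi)coherent $\pi_*\mathcal{O}_X$-modules on $Y$. The plan is to package both $\pi_*\mathcal{O}_X$ and the twisted endomorphism into one sheaf of algebras on $Y$.

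\textbf{Construction of $\mathcal{B}_0(f)$.} Set $\mathcal{R}=\pi_*\mathcal{O}_X$; $G$ acts on $\mathcal{R}$ by $\mathcal{O}_Y$-algebra automorphisms, with $\sigma=f^\#$. Since $f$ is an automorphism, a twisted endomorphism $\phi:\mathcal{F}\to f^*\mathcal{F}$ is the same as a morphism $f_*\mathcal{F}\to\mathcal{F}$, after adjunction and replacing $f$ by $f^{-1}$ if needed. On $Y$ this becomes an additive map $t:\pi_*\mathcal{F}\to\pi_*\mathcal{F}$ that is $\sigma$-semilinear, i.e.\ $t(rm)=\sigma(r)t(m)$. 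Define $\mathcal{B}_0(f)$ to be the skew polynomial algebra $\mathcal{R}[t;\sigma]$: as a left $\mathcal{R}$-module it is $\bigoplus_{k\ge0}\mathcal{R}t^k$, with multiplication determined by $tr=\sigma(r)t$. This is a quasi-coherent sheaf of algebras on $Y$. It is not coherent, since it is infinitely generated. The first bullet is then essentially tautological: a $\mathcal{B}_0(f)$-module is an $\mathcal{R}$-module with a semilinear $t$, and morphisms correspond. For the coherent version, take $\mathsf{Coh}(\mathcal{B}_0(f))$ to mean modules coherent over $\mathcal{O}_Y$, equivalently over $\mathcal{R}$. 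The only routine checks are that the dictionary $\phi\leftrightarrow t$ is compatible with restriction to affine opens of $Y$, which holds because $\pi$ is affine, and that the compatibility condition $(f^*\eta)\phi=\phi'\eta$ becomes $t$-linearity.

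\textbf{Construction of $\mathcal{A}_0(f)$.} The key point is that $t^n$ is central: $t^n r=\sigma^n(r)t^n=rt^n$, and $t^n$ commutes with $t$. Also, $\mathcal{B}_0(f)$ is finite over $\mathcal{O}_Y[s]$, where $s=t^n$: it is generated as a module by $\mathcal{R}t^k$ for $0\le k<n$, and $\mathcal{R}$ is coherent over $\mathcal{O}_Y$. Hence $\mathcal{B}_0(f)$ is the pushforward to $Y$ of a coherent sheaf of algebras $\mathcal{A}_0(f)$ on $Y\times\mathbb{A}^1=\operatorname{Spec}_Y\mathcal{O}_Y[s]$. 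Since $p:Y\times\mathbb{A}^1\to Y$ is affine, $p_*$ gives an equivalence between quasi-coherent $\mathcal{A}_0(f)$-modules and quasi-coherent $\mathcal{B}_0(f)$-modules. Composing with the first bullet gives $\mathsf{QCoh}(\mathcal{A}_0(f))\simeq\mathsf{QCoh}_{X,f}$.

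\textbf{Restriction to coherent objects.} This is the main obstacle. Let $\mathcal{M}$ be a coherent $\mathcal{A}_0(f)$-module. It is coherent over $\mathcal{O}_{Y\times\mathbb{A}^1}$ because $\mathcal{A}_0(f)$ is coherent there. We need to show that $p_*\mathcal{M}$ is coherent on $Y$ if and only if $\operatorname{Supp}\mathcal{M}$ is proper over $Y$. For $\Leftarrow$: a closed subscheme of $Y\times\mathbb{A}^1$ that is proper over $Y$ is finite over $Y$, since it is proper and affine over $Y$. So $p_*\mathcal{M}$ is coherent; here one should take a scheme-theoretic support such as the annihilator, and coherence is unaffected. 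For $\Rightarrow$: if $p_*\mathcal{M}$ is coherent, then locally on $Y=\operatorname{Spec}A$ the module $M$ is finitely generated over $A$, so $s$ satisfies a monic polynomial by Cayley--Hamilton. The support therefore lies in $V(\chi(s))$, which is finite over $Y$. Finally, morphisms in both categories agree because the equivalence is full, so it restricts to an equivalence $\mathsf{Coh}(\mathcal{A}_0(f))^{pr}\simeq\mathsf{Coh}_{X,f}$.
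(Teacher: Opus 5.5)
Your proposal is correct and is essentially the paper's argument. Your global skew polynomial sheaf over $\pi_*\mathcal{O}_X$ is the paper's $\mathcal{B}_0(f)$, which the paper builds by gluing the local rings $R_i[z;\theta_i^{-1}]$; centrality of the $d$-th power of the variable then gives the coherent algebra on $X/f\times\mathbb{A}^1$, and the determinant trick applied to $\phi^d$ gives finiteness, hence properness, of supports.

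One small fix: with $\sigma=f^\#$ one actually has $\phi r=\sigma^{-1}(r)\phi$, so the algebra should be $\mathcal{R}[t;\sigma^{-1}]$ (this is the paper's ``note the inverse''). Do not ``replace $f$ by $f^{-1}$'', since that would change the category $\mathsf{Coh}_{X,f}$.
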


Next, we specialize to coherent sheaves of fixed length $ n \ge 0$. We prove that the stack $ \mathcal{P}^n(X,f) $ parametrizing pairs $ (\mathcal{F}, \phi) $ of coherent sheaves of length $ n$ and a twisted endomorphism is algebraic. Adding a section $ s \in H^0(X, \mathcal{F})$, we consider the moduli space $ \mathscr{H}^n_{X,f} $ parametrizing triples $ (\mathcal{F}, \phi, s) $ such that $ s$ and $ \phi$ together generate $ \mathcal{F}$. It is shown that this moduli space is the Hilbert scheme parametrizing quotients of $ \mathcal{A}_0(f)$.

For any integer $ m \ge 0$, let $ p_3(m) $ be the number of ways to create a `3D-partition' of $ m$. This is the number of ways to stack unit cubes in the positive octant, starting from the corner (in analogy with Young diagrams which yield `2D-partitions'). The generating series $ \sum_{m \ge 0} p_3(m)q^m $ is well-known to be equal to the infinite product $ \prod_{k \ge 1} \frac{1}{(1-q^k)^k}$. The function $ M(q) = \prod_{k \ge 1} \frac{1}{(1-q^k)^k}$ is the MacMahon function.
\par
Let $ e(\cdot) $ denote the topological Euler characteristic. Our second result, proved in Section 3, gives the Euler characteristics $ e(\mathscr{H}^n_{X,f})$.

\begin{theorem}
    Let $ X$ be a smooth projective surface over $ \mathbb{C}$ and $ f : X \to X$ be an automorphism of finite order. Then the generating function for the Euler characteristics of $ \mathscr{H}^n_{X,f}$ satisfies the identity \[ \sum_{n \ge 0} e(\mathscr{H}^n_{X,f})q^n = M(q)^{e(X)} \] 
\end{theorem}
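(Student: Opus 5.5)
The plan is a torus-localization argument. Points of $\mathscr{H}^n_{X,f}$ are triples $(\mathcal{F},\phi,s)$ with $\mathcal{F}$ of length $n$, and $s$ generates $\mathcal{F}$ under $\phi$. The first step is a stratification by support. Since $\phi:\mathcal{F}\to f^*\mathcal{F}$ is twisted, the support of $\mathcal{F}$ is $f$-stable. I would therefore work over $X/f$: a triple decomposes canonically as a direct sum over the $f$-orbits in its support. This gives, in the usual way, a product formula
\[ \sum_{n} e(\mathscr{H}^n_{X,f})q^n = \prod_{\text{orbit types}} \Bigl(\sum_n e(\mathscr{H}^n_{\mathrm{loc},O})q^n\Bigr)^{e(\text{stratum})}. \]
Here the strata of $X/f$ are indexed by stabilizer type: orbits of size $d$, with stabilizer $\langle f^d\rangle$ acting on the tangent space at a point. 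Each local factor is the Euler characteristic of the punctual moduli space at one orbit $O$. To make this rigorous I would use the constructible-function / power-structure formalism (Gusein-Zade–Luengo–Melle-Hernández). This is justified because the punctual spaces depend only on the formal neighbourhood of the orbit together with the action of $f$. By Theorem 1.1, they are punctual Hilbert schemes of the completed algebra $\widehat{\mathcal{A}_0(f)}$ at the corresponding point of $X/f$.

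The second step computes the local factor for an orbit $O=\{x,f(x),\dots,f^{d-1}(x)\}$ of size $d$. Let $g=f^d$, a finite-order automorphism fixing $x$. After linearizing, $g$ acts diagonally on $\widehat{\mathcal{O}}_{X,x}\cong\mathbb{C}[[u,v]]$ with $u\mapsto\zeta_1u$, $v\mapsto\zeta_2v$, where the $\zeta_i$ are roots of unity; this is possible because $g$ has finite order and we are over $\mathbb{C}$. A twisted endomorphism on the orbit is then a cyclic chain of maps $\mathcal{F}_x\to\mathcal{F}_{f(x)}\to\dots\to\mathcal{F}_x$, and the last map is twisted by $g$. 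I would identify this data with a module over $\mathbb{C}[[u,v]]\langle t\rangle/(tu=\zeta_1ut,\ tv=\zeta_2vt)$, where $t$ is the composite, together with the bookkeeping of which orbit point each piece sits over. Concretely, the local algebra is a skew polynomial ring, a quantum $3$-space $\mathbb{C}_{\zeta}[u,v,t]$ localized formally in $u,v$. The generated-by-section condition makes the punctual moduli a space of cyclic modules, and the length is counted over all $d$ orbit points.

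Next, $T=(\mathbb{C}^*)^3$ acts on this local algebra by rescaling $u,v,t$; the relations are homogeneous, so the action is well defined. The fixed points are cyclic monomial quotients, i.e.\ monomial ideals in the quantum affine space, and these are indexed by 3D partitions exactly as in the commutative case. Since the $\zeta$-commutation relations do not change the set of monomials, each fixed point is isolated, and $e$ of the punctual space at length $m$ equals $p_3(m)$ (with the appropriate rescaling of the length variable by $d$). The local factor is therefore $M(q^d)$ in the orbit length, or $M(q)$ per point in the right normalization. Combining with the stratification, the exponents sum to
\[ \sum_d d\cdot e(\{x: |O_x|=d\})/d\cdot\ldots, \]
and this should collapse to $e(X)$, giving $M(q)^{e(X)}$. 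I would check the normalization carefully on $X=\mathbb{P}^1\times\mathbb{P}^1$ with a swap.

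The main obstacle is the normalization of length across an orbit of size $d$. I need to verify that the local generating series really is $M(q^d)$ when the grading is by total length of $\mathcal{F}$, rather than something like $M(q)^d$. I also need to check that $e(\text{orbits of size } d\text{ in } X/f)\cdot d$ summed over $d$ yields $e(X)$; this holds because the map from free orbits is a $d$-fold étale cover, so $e$ multiplies by $d$. If $M(q^d)$ appears instead, I would reexamine whether a section $s$ at one point of the orbit forces the module to be spread with equal length over all $d$ points. If it is not spread, the local monomial count gives $p_3$ in total length directly, and the exponent is $e(X)$ once all $d$ points of the orbit are counted separately. A secondary issue is justifying the power-structure/stratification step on the possibly singular quotient $X/f$; I would handle it by working $f$-equivariantly on $X$ itself.
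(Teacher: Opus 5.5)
Your overall architecture matches the paper's: stratify $X/f$ by stabilizer type, reduce to punctual moduli over a single orbit, torus-localize to monomial quotients, and reassemble with exponents $e(\text{stratum})$. Your constructible-function version of the reassembly is fine, and it needs only the pointwise computation of each fibre.

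The gap is in the one step that actually produces the answer: the local factor at an orbit $O=\{x,f(x),\dots,f^{c-1}(x)\}$ of size $c$ (your $d$). Base-changing $\mathcal{B}_0(f)$ to the completion at the image of $O$ in $X/f$ gives $\bigl(\prod_{i=0}^{c-1}\widehat{\mathcal{O}}_{X,f^i(x)}\bigr)[z;\theta^{-1}]$. After linearizing the stabilizer, this is the algebra $B$ of Proposition 3.5 with $D=\prod_{i=0}^{c-1}\mathbb{C}e_i$ and $e_{i+1}z=ze_i$.

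Your quantum $3$-space with $t=\phi^c$ is only the corner $e_0Be_0$. Since $z$ is not invertible, $Be_0B\neq B$ for $c\ge 2$, so modules over the corner do not capture the data. For example, the stable triple given by the skyscraper $\mathbb{C}_{f(x)}$ with $\phi=0$ restricts to zero at $x$. Consequently, counting one 3D partition per orbit is wrong, and so is your leading guess $M(q^c)$. Already for $n=1$, an orbit of size $c\ge 2$ supports exactly $c$ stable triples (a skyscraper at one of its points, with $\phi=0$), whereas $M(q^c)$ has no $q^1$ term. For the swap on $\mathbb{P}^1\times\mathbb{P}^1$, your guess would give $M(q)^2M(q^2)$ instead of $M(q)^4$.

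The missing idea is that the generating section corresponds to $1=\sum_i e_i$. A torus-fixed left ideal then splits along the decomposition $B=\bigoplus_i Be_i$ into left ideals, and each $Be_i$ has monomial basis $x^ay^bz^ke_i$, exactly like a polynomial ring in three variables. Hence the fixed points are $c$-tuples $(\pi_0,\dots,\pi_{c-1})$ of 3D partitions with $\sum_i|\pi_i|=n$, and the local series graded by total length is $M(q)^c$. If $R^c\subset X$ denotes the locus of points with orbit size $c$, then $e(R^c/f)=e(R^c)/c$. So this stratum contributes $(M(q)^c)^{e(R^c)/c}=M(q)^{e(R^c)}$, and the exponents add up to $e(X)$.

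Your fallback (``count the $d$ points of the orbit separately'') lands on the right series, but not for the reason you give. The module is spread around the orbit by $z$. The $c$ factors of $M(q)$ are indexed by the components $e_is$ of the section, not by the points of the support.
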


The right hand side is independent of the automorphism. If $ f = id_X $ is the trivial automorphism, then the moduli spaces $ \mathscr{H}^n_{X, id_X} $ are the Hilbert schemes $ \operatorname{Hilb}^n(X \times \mathbb{A}^1) $. So Theorem 1.2 is an extension of the result of Cheah in \cite{cheah96}, which says that for any smooth threefold $ Y$, $ \sum_{n \ge0} e(\operatorname{Hilb}^n(Y))q^n = M(q)^{e(Y)}$.

\subsection{Related Work}

Twisted endomorphisms of the form $ \mathcal{F} \to \mathcal{F} \otimes \mathcal{L} $ a sheaf $ \mathcal{F} $ and a line bundle $ \mathcal{L} $ on $ X $ were considered by Beauville-Narasimhan-Ramanan in \cite{beauville1989spectral}. In that case, pairs $ (\mathcal{F}, \phi: \mathcal{F} \to \mathcal{F} \otimes \mathcal{L}) $ are equivalent to sheaves on the total space $ \operatorname{Tot}(\mathcal{L})$. Since $ e(\operatorname{Tot}(\mathcal{L})) = e(X) $, the generating function for $ e(\operatorname{Hilb}^n(\operatorname{Tot}(\mathcal{L})))$ is $ M(q)^{e(X)} $ for any smooth projective surface $ X$, by \cite{cheah96}. We take the different direction of twisting by an automorphism, and show that this also produces the `correct' identity via Theorem 1.2. 
\par 
The generating series for Euler characteristics of Hilbert schemes of points on a smooth surface was found by Göttsche in \cite{gottsche2001motive}. For singular surfaces with Kleinian singularities, Gyenge-Némethi-Szendröi conjecture and compute the generating series in \cite{gyenge2018euler}. The enumerative theory for $G $-invariant Hilbert schemes has been studied by Bryan-Gyenge in \cite{bryan2022euler} and Pietromonaco in \cite{pietromonaco2021g} for K3 and Abelian surfaces equipped with the action of a finite symplectic group $ G$, and it is shown that the generating series are related to modular forms.
\par
The MacMahon function is ubiquitous in enumerative geometry and Donaldson-Thomas theory. For example, the generating series for Donaldson-Thomas invariants of Hilbert schemes of points on threefolds are expressed in terms of $ M(q)$ and invariants of $ X$ by Behrend-Fantechi in \cite{BehrendFantechi2008}, Levine-Pandharipande in \cite{levine2009algebraic} and Li in \cite{Li2006Zero}. 
\par
The higher-rank Donaldson-Thomas theory was considered by Fasola-Monavari-Ricolfi in \cite{fasola2021higher}. The torus fixed points of the `local-Quot scheme' $ \operatorname{Quot}_{\mathbb{A}^3}(\mathcal{O}^{\oplus r}, n)$ parameterizing length $ n$ quotients of the trivial bundle of rank $ r$, are proven to be in bijection with $ r$-colored plane partitions (Proposition 3.12 of \cite{fasola2021higher}). Our result in Proposition 3.5 related to the local model in the noncommutative setting may be regarded as a direct analogue.

\subsection{Acknowledgments}

I would like to thank my PhD advisor Kai Behrend for introducing me to this line of research, and for many helpful discussions. Parts of this work have appeared in my PhD thesis.

\subsection{Conventions}

 Unless otherwise stated, a product $ S \times T$ of schemes is always assumed to be over a base field. If $ g : S \to T$ is a morphism of schemes and $ U$ is any scheme, then $ g_U$ denotes the morphism $ g \times id_U : S \times U \to T \times U$. The projections $ S \times T \to S$ and $ S \times T \to T$ are denoted by $ p_S $ and $ p_T$ respectively. The notation $ |U|$ is used for the underlying topological space of a scheme $ U$.
\par
For a scheme $ Z$ of finite type over the complex numbers, the (topological) Euler characteristic is $ e(Z) = \sum_{i \ge 0} (-1)^i \dim H^i(Z^{an}, \mathbb{C})$, where $ Z^{an} = Z(\mathbb{C})$ with the analytic topology. It satisfies the excision property: For a finite-type scheme $ Z$, $ e(Z) = e(W) + e(Z - W) $ for any closed subscheme $ W \subset Z$. It also satisfies the multiplicative property: For finite-type schemes $ E,B,F $, we have $ e(E) = e(B)e(F)$ where $ E \to B $ is a locally trivial fibration in the analytic topology with fiber $ F$.

\subsection{Statement about the use of AI} Generative AI was not used to obtain any of the results in this paper.

\section{The Moduli Spaces}

\newcommand{\f}{\mathcal{F}}
\newcommand{\ff}{\mathscr{F}}
\newcommand{\suppf}{\operatorname{Supp} \mathscr{F}}
\newcommand{\cohnx}{\mathcal{C}\! \mathit{oh}^n(X)}
\newcommand{\cohn}{\mathcal{C}\! \mathit{oh}^n}
\newcommand{\homm}{\mathcal{H}\! \mathit{om}}
\newcommand{\pnxf}{\mathcal{P}^n(X,f)}
\newcommand{\rzt}{R[z; \theta]}
\newcommand{\bo}{\mathcal{B}_0(f)}
\newcommand{\ao}{\mathcal{A}_0(f)}
\newcommand{\af}{\mathcal{A}(f)}
\newcommand{\hnxf}{\mathscr{H}^n_{X,f}}

\subsection{Some Technical Background} 

Let $ X $ be a quasi-projective variety over an algebraically closed field $ k $ of characteristic 0. We review some definitions and results on families of coherent sheaves. 
\par
Let $ n \ge 0 $ be a fixed non-negative integer and $ T $ be a $ k$-scheme. A family of 0-dimensional coherent sheaves of length $ n $ parameterized by the base $ T$ is a coherent sheaf $ \ff $ on $ X \times T$ such that \begin{itemize}
    \item $ \ff$ is flat over $ T$.
    \item The support $ \suppf $ is finite over $ T$.
    \item $ (p_T)_* \ff $ is a vector bundle of rank $ n$ on $T$.
\end{itemize}
The stack $ \cohnx $ is the stack parameterizing 0-dimensional coherent sheaves of length $ n$ on $ X$. More precisely, this is the category whose objects are pairs $ (T, \ff) $ where $ T $ is a $ k $-scheme and $ \ff$ is a family of 0-dimensional coherent sheaves of length $ n$ parameterized by $ T$, and morphisms from $ (T', \ff')$ to $ (T, \ff)$ are pairs $ (h, \psi)$ where $ h : T' \to T$ is a $k$-morphism and $ \psi : h_X^* \ff \to \ff' $ is an isomorphism. The fact that $ \cohnx $ is an algebraic stack is a well-known result proved in many sources; see, for example, \cite{fantechi2024stack0dimensionalcoherentsheaves} or \cite{fantechi2025stack0dimensionalcoherentsheaves} for some special cases or \cite[\href{https://stacks.math.columbia.edu/tag/08WC}{Theorem 08WC}]{stacks-project} for a more general statement.
\vspace{6pt}
\par
Next, we aim to construct a moduli space parameterizing sheaves with a twisted endomorphism. Let $ f : X \to X$ be a $k$-automorphism.

\begin{defn} 
    Let $ \f $ be a coherent sheaf on $ X$. A twisted endomorphism of $ \f$ is a morphism $ \phi : \f \to f^* \f$.
\end{defn}

Note that $ \phi $ is not a morphism of the sheaf $ \f$ but can instead be viewed as lying over the automorphism $ f $. However, $ \phi$ itself needn't be an isomorphism.

\begin{defn}
    The category $ \pnxf$ is defined as follows. The objects are triples $ (T, \ff, \Phi) $ where $ (T, \ff) $ is an object of $ \cohnx $ and $ \Phi : \ff \to f_T^* \ff$ is a morphism. A morphism from $ (T', \ff', \Phi') $ to $ (T, \ff, \Phi)$ is a morphism $ (h, \psi): (T', \ff') \to (T,\ff) $ in $ \cohnx$ such that the square
\[\begin{tikzcd}
	{h_X^*\ff} & {h_X^* f_T^* \ff } \cong f_{T'}^*h_X^* \ff \\
	{\ff'} & {f_{T'}^* \ff'}
	\arrow["{h_X^* \Phi}", from=1-1, to=1-2]
	\arrow["{\psi}"', from=1-1, to=2-1]
	\arrow["{f_{T'}^* \psi}", from=1-2, to=2-2]
	\arrow["\Phi"', from=2-1, to=2-2]
\end{tikzcd}\]
commutes.
\end{defn}

By forgetting the twisted endomorphism, there is a natural morphism $ \pnxf \to \cohnx $. The fact that this morphism is representable by algebraic spaces may be obtained as a direct consequence of Proposition 2.1.3 of \cite{lieblich2006remarksstackcoherentalgebras}. For brevity, let $ \mathcal{S} := \cohnx $. The automorphism $ f $ of $ X $ induces a natural  automorphism $ f_{\mathcal{S}} $ of $ \mathcal{S}$. Let $ \mathscr{U} $ be the universal coherent sheaf on $ X \times \mathcal{S}$. Note that the Hom-stack $ \underline{\homm}(\mathscr{U}, f_{\mathcal{S}}^* \mathscr{U}) $ is precisely $ \pnxf$. Since $ f_{\mathcal{S}}^* \mathscr{U} $ is $ \mathcal{S} $-flat and has proper (in fact finite) support over $ \mathcal{S}$, Proposition 2.1.3 of \cite{lieblich2006remarksstackcoherentalgebras} applies by taking $ Y = X \times \mathcal{S} $ and $ S = \mathcal{S}$. It follows that $ \pnxf $ is also an algebraic stack.

\begin{lem}
    Let $ (T, \ff, \Phi)$ be an object in $ \pnxf$. For $ t \in T$, let $ \ff_t$ denote the sheaf $ \ff|_{X \times t}$ and $ \Phi_t : \ff_t \to f^* \ff_t $ be the morphism $ \Phi $ restricted to $ X \times t $. The locus of points $ t \in T $ such that $ \Phi_t $ is an isomorphism is open in $ T$, and the locus of points $ t \in T $ for which $ \Phi_t = 0$ is closed in $ T $.
\end{lem}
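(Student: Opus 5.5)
The plan is to push everything down to $T$, where the statement becomes a claim about a morphism of vector bundles. Write $\pi = p_T : X \times T \to T$. Since $\operatorname{Supp}\ff$ is finite over $T$, the morphism $\pi$ restricted to a closed subscheme containing the support is finite, hence affine. Pushforward along a finite morphism is exact, and it commutes with arbitrary base change for quasi-coherent sheaves. We know $\pi_*\ff$ is a vector bundle of rank $n$. The same holds for $\pi_* f_T^*\ff$: the map $f_T$ is an automorphism of $X \times T$ over $T$, so $f_T^*\ff \cong (f_T^{-1})_*\ff$, and hence $\pi_* f_T^*\ff \cong \pi_*\ff$, again locally free of rank $n$. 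Pushing $\Phi$ forward therefore gives a morphism of rank $n$ vector bundles
\[ \pi_*\Phi : \pi_*\ff \to \pi_* f_T^*\ff \]
on $T$.

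The key step is to check that this pushforward is compatible with restriction to fibres. By finite base change, $(\pi_*\ff)\otimes k(t) \cong H^0(X,\ff_t)$ and $(\pi_*\Phi)\otimes k(t) = H^0(\Phi_t)$. Moreover, because $\ff_t$ is $0$-dimensional, the functor $H^0$ is an exact and faithful equivalence between quasi-coherent sheaves on the finite scheme $\operatorname{Supp}\ff_t$ and modules over its coordinate ring. Consequently $\Phi_t$ is an isomorphism if and only if the $n\times n$ matrix $H^0(\Phi_t)$ is invertible, and $\Phi_t = 0$ if and only if $H^0(\Phi_t) = 0$. Care is needed to make this identification with a single support scheme for both $\ff_t$ and $f^*\ff_t$; I would take the scheme-theoretic union of $\operatorname{Supp}\ff$ and $f_T^{-1}\operatorname{Supp}\ff$, which is still finite over $T$.

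Both loci can now be read off locally. Work on an open set of $T$ trivializing both bundles, so that $\pi_*\Phi$ is an $n\times n$ matrix $M$ of regular functions. The isomorphism locus is $\{\det M \neq 0\}$, which is open. The zero locus is cut out by the vanishing of all entries of $M$, which is closed. In both cases the value at $t$ is computed by $M(t) = M\otimes k(t)$, which the previous step identified with $H^0(\Phi_t)$.

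The main obstacle is justifying the base-change identification $(\pi_*\Phi)\otimes k(t) = H^0(\Phi_t)$ together with the faithfulness of $H^0$ on $0$-dimensional sheaves. The first follows from affineness of $\pi$ on the support, since pushforward along an affine morphism commutes with base change. The second holds because a sheaf on an affine scheme is determined by its global sections.
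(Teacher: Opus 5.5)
Your argument is correct, but it is organized differently from the paper's proof. You push $\Phi$ down to a morphism $\pi_*\Phi$ of rank-$n$ bundles on $T$ and read off both loci at once from a local matrix $M$. The isomorphism locus is $\{\det M \neq 0\}$ and the zero locus is cut out by the entries of $M$. For this you only need two facts: pushforward along the finite support commutes with base change, and $H^0$ detects zero maps and isomorphisms between $0$-dimensional sheaves.

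The paper instead treats the two loci by separate mechanisms. For the isomorphism locus, it first takes the cokernel of $\Phi$ and removes the closed image in $T$ of its support; this gives the surjectivity locus. It then uses $T$-flatness of $f_T^*\mathscr{F}$ (Huybrechts--Lehn, Lemma 2.1.4) to see that the kernel of $\Phi$ restricts to the kernel of $\Phi_t$, and removes the image of the kernel's support. For the zero locus it simply invokes EGA III, Corollaire 7.7.8.

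Your route is more elementary and more explicit: it gives natural local equations for both loci, and the local freeness of $\pi_*\mathscr{F}$ plays the role of the flatness lemma. However, it relies essentially on the sheaves being $0$-dimensional, since that is what makes $H^0$ conservative and faithful. The paper's argument uses only that the supports are proper over $T$ and that the sheaves are $T$-flat, so it would apply verbatim to positive-dimensional flat families with proper support. Its EGA step is also the same device reused later for the Hilbert scheme construction.

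One small simplification for you: the scheme-theoretic union of the supports is not really needed. On a fibre both sheaves have finite support, so $H^0$ is the direct sum of the stalks at the finitely many points of the union of the supports. Hence $H^0(\Phi_t)$ is the direct sum of the stalk maps, and it vanishes or is invertible exactly when $\Phi_t$ is.
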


\begin{proof}
    Let $ \mathscr{L}$ be the cokernel of $ \Phi$. Since $ \operatorname{Supp} \mathscr{L}$ is finite over $ T$, the image $ p_T(|\operatorname{Supp} \mathscr{L}|) \subset T $ is closed. Let $ U$ be the open complement. Then $ \Phi_t$ is surjective iff $ t \in U$, so we may assume $ U=T$ and that $ \Phi$ is surjective. Let $ \mathscr{K} $ be the kernel of $ \Phi$. Since $ \ff$ and $ f^* \ff$ are $ T$-flat, $ \mathscr{K}_t$ is the kernel of $ \Phi_t$ for every $ t \in T$ (by Lemma 2.1.4 in \cite{huybrechts2010geometry}). Since $ \operatorname{Supp} \mathscr{K}$ is also finite over $ T$, the previous argument applies and we get an open subset $ V \subset T$ such that $ \Phi_t$ is an isomorphism for all $ t \in V$.
    \par
    That the fiber-wise zero locus $ \{ t \in T \: | \: \Phi_t=0 \}$ is closed follows from a direct application of Corollaire 7.7.8 in \cite{EGAIII_2}.
\end{proof}

From the previous lemma, we have an open substack $ \pnxf^{\times} \subset \pnxf $ parameterizing pairs $ (\ff, \phi) $ where $ \phi$ is an isomorphism. There is an alternative description of $ \pnxf $. Let $ \mathcal{S} = \cohnx$ with the natural automorphism $ f_{\mathcal{S}} $ as before. Then we have a 2-cartesian square of stacks
\[\begin{tikzcd}
	{\pnxf^{\times}} & {\mathcal{S}} \\
	{\mathcal{S}} & {\mathcal{S} \times \mathcal{S}}
	\arrow[from=1-1, to=1-2]
	\arrow[from=1-1, to=2-1]
	\arrow["{\Gamma_{f_{\mathcal{S}}}}", from=1-2, to=2-2]
	\arrow["\Delta"', from=2-1, to=2-2]
\end{tikzcd}\]
where $ \Gamma_{f_{\mathcal{S}}}$ is the graph of $ f_{\mathcal{S}} $ and $ \Delta $ is the diagonal morphism.

\begin{rem}
    When $ f = id_X $, the stack $ \pnxf $ is isomorphic to $ \cohn(X \times \mathbb{A}^1)$. If $ \phi : \f \to \f $ is an endomorphism, $ \f$ gets the structure of a coherent $ \mathcal{O}_X[t]$-module through $ \phi$ and, therefore, defines a 0-dimensional coherent sheaf $ \mathcal{G}$ on $ \underline{\operatorname{Spec}}(\mathcal{O}_X[t]) = X \times \mathbb{A}^1 $ such that $ (p_X)_* \mathcal{G} = \mathcal{F} $. Conversely, if $ \mathcal{G} $ is a 0-dimensional coherent sheaf on $ X \times \mathbb{A}^1 $, then $ \f = (p_X)_* \mathcal{G}$ is a coherent $ \mathcal{O}_X[t] $-module on $ X $ as $ \operatorname{Supp} \mathcal{G} $ is finite over $ X $. Then $ \f $ has an endomorphism defined by the action of $ t $. The lengths of $ \f$ and $ \mathcal{G}$ are equal because $ h^0(X \times\mathbb{A}^1, \mathcal{G}) = h^0(X, (p_X)_* \mathcal{G}) = h^0(X, \mathcal{F}) $.
    \par
    It is a routine check to verify that this correspondence carries over to families of sheaves and defines an isomorphism $ \pnxf \cong \cohn(X \times \mathbb{A}^1) $.
\end{rem}

\subsection{The Case of Finite Order} From now on, assume that $ f $ has finite order, say $ d$. We first discuss some preliminaries in the affine case.

\begin{defn} (Twisted Polynomial/Laurent Rings)
    Let $ R $ be a ring (not necessarily commutative) and $ \theta : R \to R $ be a ring automorphism. The twisted polynomial ring $ R[z; \theta]$ has the same elements and additive structure as the ordinary polynomial ring $ R[z] $. The multiplication of two monomials $ r_1z^{n_1} $ and $ r_2z^{n_2} $ ($ r_1,r_2 \in R, \: n_1, n_2 \ge 0 $) in $ R[z; \theta] $ is defined by the rule $$ (r_1 z^{n_1}) (r_2z^{n_2}) = r_1 \theta^{n_1}(r_2) z^{n_1+n_2}$$ and extended linearly. The twisted Laurent ring $ R[z, z^{-1}; \theta] $ is defined similarly - the multiplication rule holds for all $ n_1, n_2 \in \mathbb{Z}$.
\end{defn}

\begin{lem}
    Let $ R$ be a commutative ring and $ \theta$ have finite order $ d $. Let $ R^{\theta} := \{ r \in R \: | \: \theta(r) = r \} $ be the ring of invariants. Then, the center of $ \rzt $ contains $ R^{\theta}[z^d] $ and the center of $ R[z, z^{-1}; \theta] $ contains $ R^{\theta}[z^d, z^{-d}]$. If $ R$ is a domain, these containments are equalities.
\end{lem}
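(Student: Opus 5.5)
The plan is to verify the containments by direct computation with monomials, and then to prove the reverse containments for a domain by comparing coefficients.

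\emph{Containments.} Since the multiplication is additive and bilinear, it suffices to check that $r z^{dk}$ commutes with the generators. Take $r \in R^{\theta}$ and $k \ge 0$ (or $k \in \mathbb{Z}$ in the Laurent case), and let $s z^m$ be an arbitrary monomial. Then
\[ (r z^{dk})(s z^m) = r\,\theta^{dk}(s) z^{dk+m} = r s z^{dk+m}, \]
because $\theta^d = \mathrm{id}$. In the other order,
\[ (s z^m)(r z^{dk}) = s\,\theta^m(r) z^{m+dk} = s r z^{m+dk}, \]
because $\theta(r) = r$. Commutativity of $R$ shows these agree. Hence $R^{\theta}[z^d]$ (respectively $R^{\theta}[z^d, z^{-d}]$) is central.

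\emph{Reverse containment for a domain.} Let $p = \sum_n a_n z^n$ be central. First test commutation with $z$. We have $z p = \sum_n \theta(a_n) z^{n+1}$ and $p z = \sum_n a_n z^{n+1}$. Comparing coefficients gives $\theta(a_n) = a_n$ for all $n$, so every coefficient lies in $R^{\theta}$. Next test commutation with a constant $s \in R$. We have $p s = \sum_n a_n \theta^n(s) z^n$ and $s p = \sum_n s a_n z^n$. This gives $a_n(\theta^n(s) - s) = 0$ for all $s \in R$ and all $n$. Now suppose $a_n \neq 0$. Since $R$ is a domain, $\theta^n(s) = s$ for all $s$, so $\theta^n = \mathrm{id}$. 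Because $\theta$ has order exactly $d$, this forces $d \mid n$. Therefore $p \in R^{\theta}[z^d]$, and the same argument gives $p \in R^{\theta}[z^d, z^{-d}]$ in the Laurent case.

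\emph{The one delicate point.} The step $\theta^n = \mathrm{id} \Rightarrow d \mid n$ requires $d$ to be the exact order of $\theta$, not merely some exponent with $\theta^d = \mathrm{id}$. I will read ``finite order $d$'' this way. For negative $n$ in the Laurent ring, $\theta^n = \mathrm{id}$ still implies $d \mid n$, so no change is needed there. Everything else is routine bookkeeping with the multiplication rule.
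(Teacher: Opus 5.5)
Your proposal is correct and follows the paper's proof essentially step for step. For the containment, you commute $rz^{dk}$ past monomials $sz^m$. For the reverse inclusion over a domain, commuting a central element with $z$ puts its coefficients in $R^{\theta}$, and commuting it with constants $s \in R$ gives $a_n(\theta^n(s)-s)=0$, hence $a_n=0$ unless $d \mid n$. Your remark that this last step needs $d$ to be the exact order of $\theta$ is the same assumption the paper makes implicitly.
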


\begin{proof}
 We prove the polynomial case, the proof for Laurent polynomials is the same. For any $ k \ge 0 $, it is clear that $ rz^{kd} $ is a central element whenever $ r \in R^{\theta} $ because for any monomial $ sz^n$, $ (rz^{kd})(sz^n) = r\theta^{kd}(s)z^{kd+n} = rsz^{kd+n} $ and $ (sz^n)(rz^{kd}) = s\theta^n(r) z^{n+kd} = sr z^{n+kd} $. Hence, $ R^{\theta}[z^d]$ is a central subring of $ \rzt$.
 \par
 Conversely, assume that $ p(z) = \sum_i r_iz^i$ is central in $ \rzt$. Since $ p(z) \cdot z = z \cdot p(z) $, we get $ \sum_i (r_i - \theta(r_i))z^{i+1} = 0 $ so that $ r_i \in R^{\theta} $ for all $ i $. Since $ p(z) \cdot r = r \cdot p(z) $ for every $ r \in R$, we get $ \sum_i r_i(\theta^i(r)-r)z^i = 0 $ so that $ r_i(\theta^i(r)-r) = 0$ for every $ i \ge 0 $ and every $ r \in R$. If $ d $ divides $ i $, this equation is true for any $ r$. If $ d$ does not divide $ i$, $ \theta^i$ is not the identity automorphism and choosing an $ r$ such that $ \theta^i(r) \neq r$, we see that $ r_i = 0$. It follows that $ p(z) $ is of the form $ \sum_{j} r_{dj}(z^d)^j$ with $ r_{dj} \in R^{\theta}$, so the center is exactly $ R^{\theta}[z^d] $.
\end{proof}

\begin{cor}
    Let $ R$ be a finitely generated commutative $ k$-algebra which is a domain, and let $ \theta$ have finite order $ d$. Then $ \widetilde{\rzt}$ is a coherent sheaf of noncommutative algebras on $ \operatorname{Spec} R^{\theta}[z^d] $.
\end{cor}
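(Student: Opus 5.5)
The plan is to show three things. First, the preceding lemma makes $C := R^{\theta}[z^d]$ a central subring of $\rzt$, so $\rzt$ is a $C$-algebra. Second, $C$ is a finitely generated commutative $k$-algebra, so that $\operatorname{Spec} C$ is a variety. Third, $\rzt$ is a finitely generated $C$-module. With these, the associated sheaf $\widetilde{\rzt}$ on $\operatorname{Spec} C$ is a coherent $\mathcal{O}$-module. Its multiplication is $C$-bilinear, because $C$ is central. Hence it sheafifies to a multiplication on $\widetilde{\rzt}$, since localizing at central elements $c \in C$ is compatible with the ring structure: $\rzt_c = \rzt \otimes_C C_c$ is again a ring. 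This gives a coherent sheaf of noncommutative algebras.

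For finite generation of $C$, I will use the classical Noether theorem on invariants of finite groups in characteristic 0. The group $G = \langle \theta \rangle \cong \mathbb{Z}/d$ acts on the finitely generated $k$-algebra $R$. Hence $R^{\theta}$ is a finitely generated $k$-algebra and $R$ is a finite $R^{\theta}$-module. The finiteness of $R$ over $R^{\theta}$ is the standard integrality argument: each $r \in R$ is a root of the monic polynomial $\prod_{i=0}^{d-1}(x - \theta^i(r))$, which has coefficients in $R^{\theta}$. Combined with finite generation of $R$ as an algebra, this gives module-finiteness. Then $C = R^{\theta}[z^d]$ is a polynomial ring over $R^{\theta}$ in one variable, so it is also finitely generated over $k$.

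For module-finiteness of $\rzt$ over $C$, I take generators $r_1,\dots,r_m$ of $R$ as an $R^{\theta}$-module. I claim the elements $r_j z^i$ with $0 \le i < d$ generate $\rzt$ as a $C$-module. Indeed, any monomial $s z^n$ can be written with $n = qd + i$ and $0 \le i < d$. Writing $s = \sum_j a_j r_j$ with $a_j \in R^{\theta}$, we get
\[ s z^n = \sum_j (a_j z^{qd}) \cdot (r_j z^i). \]
The factor $a_j z^{qd}$ lies in $C$ and is central, so the product is unambiguous. Thus $\rzt$ is a finite $C$-module, which proves the corollary.

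The only substantive input is Noether's finiteness theorem; the rest is bookkeeping. The domain hypothesis is not needed for this containment argument. It only serves to identify $C$ with the full center via the preceding lemma.
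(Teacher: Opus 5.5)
Your proposal is correct and follows essentially the same route as the paper. Both arguments obtain Noetherianity of $R^{\theta}[z^d]$ and module-finiteness of $R$ over $R^{\theta}$ by integrality, then deduce that $R[z;\theta]$ is finite over $R^{\theta}[z^d]$ via the tower $R^{\theta}[z^d] \subset R[z^d] \subset R[z;\theta]$, which your explicit generators $r_j z^i$ ($0 \le i < d$) simply make concrete; your remarks on sheafifying the multiplication by central localization, and on the domain hypothesis being needed only to identify the full center, are accurate additions to the paper's terser proof.
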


\begin{proof}
    The ring $ R^{\theta}[z^d]$ is Noetherian. In addition, $ R$ is integral over $ R^{\theta} $ and hence is finitely generated as a $ R^{\theta}$-module. So $ \rzt$ is a finitely generated $ R^{\theta}[z^d]$-module as can be seen from the tower of modules $ R^{\theta}[z^d] \subset R[z^d] \subset \rzt $.
\end{proof}

\begin{rem} (Functoriality)
    Let $ \alpha : R \to R'$ be a homomorphism of commutative rings. Let $ \theta $ and $ \theta'$ be automorphisms of $ R $ and $ R'$ of order $ d$, such that $ \theta' \alpha = \alpha \theta$. For brevity, denote by $ I $ and $ I' $ the invariants $ R^{\theta} $ and $ (R')^{\theta'} $.
    \par
    There is a natural induced homomorphism $ \rzt \to R'[z; \theta']$ that maps $ I[z^d] $ into $ I'[z^d] $. Moreover, the natural map $ R \otimes_I I' \to R' $ is compatible with the automorphisms. That is, the square
    \[\begin{tikzcd}
	{R \otimes_I I'} & {R'} \\
	{R \otimes_I I'} & {R'}
	\arrow[from=1-1, to=1-2]
	\arrow["{\theta \otimes id_{I'}}"', from=1-1, to=2-1]
	\arrow["{\theta'}", from=1-2, to=2-2]
	\arrow[from=2-1, to=2-2]
\end{tikzcd}\]
    is commutative. From this it follows that if $ m : R[z] \otimes_I R[z] \to R[z]$ denotes the multiplication map of $ \rzt$ and likewise for $ m'$, then multiplication is compatible under passage to invariants. That is,
    \[\begin{tikzcd}
	{(R[z] \otimes_I R[z]) \otimes_I I'} && {R[z] \otimes_I I'} \\
	{R'[z] \otimes_{I'} R'[z]} && {R'[z]}
	\arrow["{m \otimes id_{I'}}", from=1-1, to=1-3]
	\arrow[from=1-1, to=2-1]
	\arrow[from=1-3, to=2-3]
	\arrow["{m'}"', from=2-1, to=2-3]
\end{tikzcd}\]
    is also a commutative square.
\end{rem}

These local considerations allow us to define a noncommutative sheaf of algebras on the quotient variety $ X/f $. Recall that the quotient $ X/f $ is constructed as follows. Cover $ X $ by affine open $ f $-invariant subschemes $ \{ U_i = \operatorname{Spec} R_i \}_i $. Let $ U_{ij} = \operatorname{Spec} R_{ij} = U_i \cap U_j $ be the intersections which are also affine (as $ X$ is separated) and $ f$-invariant. Each of the rings $ R_i$, respectively $ R_{ij}$ gets automorphisms $ \theta_i $, respectively $ \theta_{ij} $ that are compatible with $ R_i \to R_{ij}$ in the sense of the previous remark. Then, the quotient is obtained by gluing the spectra of the invariants $ I_i = R_i^{\theta_i} $, that is, $ \operatorname{Spec} I_i$ and $ \operatorname{Spec} I_j$ are glued on $ \operatorname{Spec} I_{ij} $ (and this gluing data is compatible).
\par
Let $ q : X \to X/f $ be the quotient map. To define an (associative) multiplication map $ q_* \mathcal{O}_X[z] \otimes q_* \mathcal{O}_X[z] \to q_* \mathcal{O}_X[z] $, it is sufficient to define multiplications locally on each $ \operatorname{Spec} I_i$ and check compatibility under restriction. On $ I_i$, define $ R_i[z] \otimes_{I_i} R_i[z] \to R_i[z] $ by multiplication in the polynomial ring $ R_i[z; \theta_i^{-1}]$ (note the inverse). Remark 2.8 implies that these local sections of $ \homm_{X/f}(q_* \mathcal{O}_X[z] \otimes q_* \mathcal{O}_X[z], q_* \mathcal{O}_X[z]) $ can be glued to give a well-defined algebra structure. We denote this algebra by $ \bo $. The underlying quasicoherent sheaf of $ \bo $ is $ q_* \mathcal{O}_X[z] $.
\par
The usefulness of $ \bo $ is in the next lemma which says that a (quasi)coherent sheaf with a twisted endomorphism is essentially the same as a (quasi)coherent $ \bo$-module.

\begin{lem}
    Let $ \phi : \f \to f^* \f $ be a twisted endomorphism of a (quasi)coherent sheaf on $ X$. Then $ \mathcal{G} = q_* \f $ is a (quasi)coherent $ \bo$-module. Conversely, a (quasi)coherent $ \bo$-module $ \mathcal{G} $ naturally gives a (quasi)coherent sheaf $ \f$ on $ X $ with a twisted endomorphism $ \phi $, and this correspondence is bijective. Therefore, this correspondence induces an equivalence of categories $ \mathsf{(Q)Coh}_{X,f} $ and $ \mathsf{(Q)Coh}(\bo) $.
\end{lem}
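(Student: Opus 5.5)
The plan is to reduce to the affine case and then glue. Since $q : X \to X/f$ is finite and affine, $q_*$ gives an equivalence between (quasi)coherent $\mathcal{O}_X$-modules and (quasi)coherent $q_*\mathcal{O}_X$-modules on $X/f$. So it suffices to show the following: over each affine $\operatorname{Spec} I_i$, extending an $R_i$-module structure on $M = \Gamma(U_i, \f)$ to an $R_i[z;\theta_i^{-1}]$-module structure is the same as giving a twisted endomorphism on $U_i$, compatibly with restriction to $U_{ij}$.

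\textbf{Local computation.} On $U = \operatorname{Spec} R$, with $f|_U$ given by $f^\# = \theta : R \to R$, we have $f^*\widetilde{M} = \widetilde{R \otimes_{\theta, R} M}$. A morphism $\phi : M \to R\otimes_{\theta,R} M$ of $R$-modules corresponds to an $R$-semilinear map. Since $\theta$ is an automorphism, $R \otimes_{\theta,R} M \cong M^{\theta^{-1}}$, which is $M$ with the $R$-action twisted by $\theta^{-1}$, via $r\otimes m \mapsto \theta^{-1}(r) m$. I will fix the convention carefully, since the paper's use of $\theta_i^{-1}$ in defining $\bo$ signals exactly this. Under this identification, $\phi$ becomes an additive map $\zeta : M \to M$ satisfying $\zeta(rm) = \theta^{-1}(r)\zeta(m)$. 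This is precisely the relation $z r = \theta^{-1}(r) z$ in $R[z;\theta^{-1}]$. So defining $z\cdot m := \zeta(m)$ makes $M$ a left $R[z;\theta^{-1}]$-module. Conversely, any such module gives back $\zeta$ and hence $\phi$. The two constructions are mutually inverse. A morphism $\eta$ commutes with the $z$-actions if and only if $(f^*\eta)\phi' \!=\! \phi\eta$ (read with the correct primes), which gives full faithfulness.

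\textbf{Gluing.} I then check that the local dictionaries are compatible with the restrictions $R_i \to R_{ij}$. By Remark 2.8, $R_i[z;\theta_i^{-1}]\otimes_{I_i} I_{ij} \to R_{ij}[z;\theta_{ij}^{-1}]$ is an algebra map, and the identification $f^*\widetilde{M}\cong \widetilde{M^{\theta^{-1}}}$ is natural in localization along $f$-invariant opens. Therefore the twisted endomorphisms defined on the $U_i$ agree on overlaps and glue to $\phi : \f \to f^*\f$, and conversely a global $\phi$ restricts to module structures that glue to a $\bo$-module structure on $q_*\f$. Quasi-coherence is preserved in both directions, since the underlying $q_*\mathcal{O}_X$-module is unchanged.

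\textbf{Coherence and the main obstacle.} Coherence needs a small remark. A coherent $\bo$-module means a quasi-coherent module that is finitely generated over $\bo$. This is \emph{not} the same as $q_*\f$ being coherent over $\mathcal{O}_{X/f}$, because $\bo$ itself is not coherent over $X/f$. The $\mathsf{Coh}$ statement must therefore be interpreted with "coherent" meaning that the underlying $q_*\mathcal{O}_X$-module is coherent, i.e.\ that $\f$ is coherent. I expect this definitional point, together with pinning down the $\theta$ versus $\theta^{-1}$ convention so that the multiplication rule matches semilinearity, to be the only real subtlety; the rest is bookkeeping.
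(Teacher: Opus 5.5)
Your proposal is correct and is essentially the paper's argument: the paper also uses that $q$ is affine (noting $q_*f^*\mathcal{F} = q_*(f^{-1})_*\mathcal{F} = q_*\mathcal{F}$, so $q_*\phi$ is already a global endomorphism of $\mathcal{G}$, which makes the gluing automatic), reduces to an $f$-invariant affine $\operatorname{Spec} R$, and observes that a twisted endomorphism is exactly an additive map with $\phi(rm)=\theta^{-1}(r)\phi(m)$, i.e.\ the defining relation of $R[z;\theta^{-1}]$. Your reading of ``coherent $\mathcal{B}_0(f)$-module'' as ``coherent over $\mathcal{O}_{X/f}$'' is what the paper's one-line justification (``$q$ is finite'') implicitly uses, and you are right that the finitely-generated-over-$\mathcal{B}_0(f)$ reading would make the $\mathsf{Coh}$ statement false; this is consistent with the properness condition the paper imposes on $\mathcal{A}_0(f)$-modules in Corollary 2.10.
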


\begin{proof}
    Since $ f^* = (f^{-1})_*$ we have $ q_* f^* \f = q_* \f$, so $ q_* \phi : \mathcal{G} \to \mathcal{G}$ is an endomorphism. Note that $ \mathcal{G} $ is already a $ q_* \mathcal{O}_X$-module. We claim that the action of $ q_*\phi$ gives it the structure of a $ \bo$-module. This is a local question, so assume $ \mathcal{F} \cong \tilde{M}$ on an $ f$-invariant open subset $ \operatorname{Spec} R \subset X $. If $ \theta$ is the induced automorphism of $ R$, then $ \mathcal{G} = \tilde{_I M} $ on $ \operatorname{Spec} I = \operatorname{Spec} R^{\theta}$ (here $ _I M $ is $ M$ viewed as a $ I$-module). On $ \operatorname{Spec} R$, $ \phi: M \to M $ becomes a morphism of abelian groups satisfying $ \phi(rm) = \theta^{-1}(r)\phi(m)$ for every $ r \in R$ and $ m \in M $, or equivalently, $ \phi r = \theta^{-1}(r) \phi $ as endomorphisms of $ M $. So $ \mathcal{G}$ is a $ \bo$-module.
    \par
    Conversely, if $ \mathcal{G}$ is a $ \bo$-module, then it is a $ q_* \mathcal{O}_X$-module and it defines a sheaf $ \f$ on $ X$ such that $ q_* \f = \mathcal{G} $. The same steps as above show that the action of $ \bo$ on $ \mathcal{G}$ gives a twisted endomorphism $ \phi : \f \to f^* \f$. In this correspondence, the morphisms $ (\f, \phi) \to (\f', \phi') $ are morphisms of $ \bo$-modules, and coherent sheaves map to coherent sheaves as $ q $ is finite, so the equivalence of categories is now clear.
\end{proof}

By Lemma 2.6, the center of $ \bo$ is $ \mathcal{O}_{X/f}[z^d]$. Let $ Y_0 = X/f \times \mathbb{A}^1_{z^d} $ and $ p : Y_0 \to X/f$ be the projection. It follows that there is a noncommutative sheaf of algebras $ \ao$ on $ Y_0 $ such that $ p_* \ao = \bo$. Note that $ \ao$ is coherent (Corollary 2.7) while $ \bo$ is only quasicoherent. 
\par
Therefore, there is an equivalence between $ \bo$-modules and $ \ao$-modules and the next corollary is the analogue of Remark 2.4.

\begin{cor}
    Let $ \phi : \f \to f^* \f$ be a twisted endomorphism of a (quasi)coherent sheaf on $ X$. Then, there is a (quasi)coherent $ \ao$-module $ \mathcal{H}$ on $ Y_0 $ such that $ q_* \f = p_* \mathcal{H}$. Conversely, a (quasi)coherent $ \ao$-module $ \mathcal{H}$ naturally gives a (quasi)coherent sheaf $ \f$ on $ X$ with a twisted endomorphism $ \phi$, and this correspondence is bijective. Therefore, there is an equivalence of categories $ \mathsf{QCoh_{X,f}} $ and the category $ \mathsf{QCoh}(\ao) $, which restricts to an equivalence between $ \mathsf{Coh_{X,f}} $ and $ \mathsf{Coh}(\ao)^{pr} $.
\end{cor}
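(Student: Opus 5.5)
The plan is to compose Lemma 2.9 with the standard correspondence for relatively affine morphisms, applied to $p : Y_0 = X/f \times \mathbb{A}^1_{z^d} \to X/f$. Lemma 2.9 already identifies $\mathsf{(Q)Coh}_{X,f}$ with $\mathsf{(Q)Coh}(\bo)$. So it suffices to show that $p_*$ gives an equivalence $\mathsf{QCoh}(\ao) \to \mathsf{QCoh}(\bo)$, and then to decide which $\ao$-modules correspond to coherent $\bo$-modules.

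\textbf{Quasicoherent equivalence.} Since $p$ is affine, $p_*$ is an equivalence between quasicoherent $\mathcal{O}_{Y_0}$-modules and quasicoherent $p_*\mathcal{O}_{Y_0} = \mathcal{O}_{X/f}[z^d]$-modules on $X/f$. I will check this locally on an affine $\operatorname{Spec} I \subset X/f$ coming from an $f$-invariant $\operatorname{Spec} R \subset X$. There $\ao$ is the sheaf $\widetilde{\rzt}$ on $\operatorname{Spec} I[z^d]$; with the paper's convention the twist is $\theta^{-1}$, which changes nothing. A quasicoherent $\ao$-module on $\operatorname{Spec} I[z^d]$ is $\widetilde{N}$ for a left $\rzt$-module $N$, and its pushforward is $N$ regarded as an $I$-module with its $\rzt$-action. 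Conversely, a $\bo$-module over $\operatorname{Spec} I$ is an $\rzt$-module $N$. Because $I[z^d]$ is central by Lemma 2.6, $N$ is an $I[z^d]$-module, and so $\widetilde{N}$ on $\operatorname{Spec} I[z^d]$ carries an $\ao$-action. These constructions are mutually inverse and compatible with localization, which is just Remark 2.8 applied to $I \to I_g$, so they glue. Composing with Lemma 2.9 gives $\mathcal{H}$ with $p_*\mathcal{H} = q_*\f$, together with the bijectivity and the equivalence of the quasicoherent categories.

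\textbf{Coherent restriction.} This is the step needing actual argument. I need that a quasicoherent $\ao$-module $\mathcal{H}$ has $p_*\mathcal{H}$ coherent over $\mathcal{O}_{X/f}$ if and only if $\mathcal{H}$ is coherent with support proper over $X/f$.

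For the forward direction, suppose $p_*\mathcal{H}$ is coherent over $\mathcal{O}_{X/f}$. Locally $N$ is then finite over $I$, hence finite over $I[z^d]$, so $\mathcal{H}$ is coherent as an $\mathcal{O}_{Y_0}$-module, and therefore as an $\ao$-module, since $\ao$ is coherent by Corollary 2.7. Moreover, $z^d$ acts on the finite $I$-module $N$, so by Cayley--Hamilton it satisfies a monic polynomial over $I$. Hence $\operatorname{Supp}\mathcal{H}$ is contained in a closed subscheme that is finite over $\operatorname{Spec} I$, and so the support is proper over $X/f$.

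For the converse, suppose $\mathcal{H}$ is coherent over $\ao$ with support $Z$ proper over $X/f$. Since $\ao$ is coherent, $\mathcal{H}$ is coherent as an $\mathcal{O}_{Y_0}$-module. Then $Z$ is proper and affine over $X/f$, because it is closed in the affine $Y_0$; hence $Z$ is finite over $X/f$. Taking a suitable nilpotent thickening of $Z$ on which $\mathcal{H}$ is a module, $p_*\mathcal{H}$ is a pushforward of a coherent sheaf along a finite morphism, so it is coherent.

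Finally, by Lemma 2.9, $q_*\f$ is coherent exactly when $\f$ is, because $q$ is finite. So the quasicoherent equivalence restricts to an equivalence $\mathsf{Coh}_{X,f} \simeq \mathsf{Coh}(\ao)^{pr}$. The main obstacle is this coherent/proper-support comparison; everything else is formal gluing.
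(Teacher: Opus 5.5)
Your proposal is correct and follows essentially the same route as the paper: you reduce to Lemma 2.9 through the affine morphism $p$, and for the coherent case you use the determinant trick (your Cayley--Hamilton argument for $z^d$, which acts as $\phi^d$) to get a monic $P(z^d)$ whose zero locus contains the support and is finite over $X/f$, while the converse is that a coherent module with proper support pushes forward to a coherent sheaf. Your extra justification of the converse (proper plus affine over $X/f$ implies finite) only expands what the paper states in one line; note that you need $Y_0$ to be affine \emph{over} $X/f$, not affine outright as your wording says.
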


\begin{proof}
    Everything is clear except the last assertion. If $ \mathcal{H}$ has proper support, then $ p_* \mathcal{H}$ is coherent and so is $ \f$. On the other hand, assume that $ \f$ is coherent, and hence so is $ q_* \f $. Checking properness of $ \operatorname{Supp} \mathcal{H}$ over $ X/f$ is a local question, so work locally on $ X/f$ as in Lemma 2.9. Then $ \phi^d : {}_{I}M \to {}_{I}M $ is a morphism of finitely generated $ I$-modules. The determinant trick implies that $ \phi^d $ satisfies a polynomial of the form $ P(t) = t^l + a_{l-1}t^{l-1} + \ldots + a_0 $ with coefficients in $ I$. Then the annihilator of $ {}_{I}M$ in $ I[z^d]$ contains $ P(z^d) = (z^d)^l + a_{l-1}(z^d)^{l-1} + \ldots + a_0$. Therefore, locally over $ \operatorname{Spec} I$, $ \operatorname{Supp} \mathcal{H}$ is contained in the closed subscheme $ \operatorname{Spec} I[z^d]/P(z^d) $ which is finite over $ \operatorname{Spec} I$ as $ P$ is monic. It follows that $ \operatorname{Supp} \mathcal{H}$ is finite and hence proper over $ X/f$ (indeed, being proper and being finite are equivalent in this scenario).
\end{proof}

Once again, it is routine to check that all these constructions carry over to families, since the constructions of $ \bo$ and $ \ao$ are compatible under base change. That is, if $ T$ is a $k$-scheme, then $ \mathcal{B}_0(f_T) $ is the pullback of $ \bo$ under the projection $ (X \times T)/f_T \cong X/f \times T \to X/f $, and similarly, for $ \mathcal{A}_0(f)$. Therefore, we get an alternate description of $ \pnxf$ as the stack of $ \ao$-modules. The equivalence of categories follows

\begin{theorem}
    If $ f $ has finite order, then $ \pnxf $ is isomorphic to the stack $ \cohn(Y_0, \ao)$ parameterizing 0-dimensional coherent $ \ao$-modules of length $ n$ on $ Y_0$. Therefore, $ \pnxf$ is an algebraic stack.
\end{theorem}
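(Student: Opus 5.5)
We will promote the equivalence of Corollary 2.10 to an isomorphism of fibered categories over $k$-schemes, working one base $T$ at a time. The argument has three steps: a family version of the equivalence, a check that the length-$n$ conditions match, and compatibility with pullback.

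\textbf{Step 1: the family equivalence.} Fix a base $T$. Apply Lemma 2.9 and Corollary 2.10 to the automorphism $f_T$ of $X \times T$. This requires two compatibilities. First, $(X\times T)/f_T \cong X/f \times T$. Second, $\mathcal{A}_0(f_T)$ is the pullback of $\ao$ to $Y_0 \times T$. Both follow from the local construction and Remark 2.8, since taking invariants under a finite group commutes with flat base change, and in characteristic $0$ with arbitrary base change. We then get an equivalence between coherent sheaves $\ff$ on $X \times T$ with $\Phi : \ff \to f_T^*\ff$ and coherent $\ao_T$-modules $\mathscr{H}$ on $Y_0 \times T$ with support proper over $X/f \times T$. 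The equivalence satisfies $q_{T*}\ff = p_{T*}\mathscr{H}$, and this is an equality of quasi-coherent sheaves on $X/f\times T$.

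\textbf{Step 2: matching the length-$n$ conditions.} Let $\pi$ denote the structure maps to $T$; note that $q_T$ and $p_T$ here are $q\times\mathrm{id}_T$ and $p\times\mathrm{id}_T$ in the paper's convention, not projections. Since $q_T$ is finite and $p_T$ is affine, flatness over $T$ and the pushforward to $T$ can both be read off $q_{T*}\ff = p_{T*}\mathscr{H}$. Flatness is local on the source, and both pushforwards are affine, so $\ff$ is $T$-flat iff $q_{T*}\ff$ is, iff $p_{T*}\mathscr{H}$ is, iff $\mathscr{H}$ is. The pushforwards to $T$ agree, so one is locally free of rank $n$ iff the other is. For the support condition: if $\operatorname{Supp}\ff$ is finite over $T$, then the determinant-trick argument of Corollary 2.10, done over $X/f\times T$, puts $\operatorname{Supp}\mathscr{H}$ inside a closed subscheme cut out by a monic polynomial in $z^d$. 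Hence $\operatorname{Supp}\mathscr{H}$ is finite over $X/f\times T$, and so finite over $T$. Conversely, if $\operatorname{Supp}\mathscr{H}$ is finite over $T$, then $p_{T*}\mathscr{H}$ has support finite over $T$, and so does $\ff$ because $q_T$ is finite.

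\textbf{Step 3: pullback compatibility and conclusion.} The equivalence commutes with pullback along $h : T' \to T$. Indeed, $q_T$ and $p_T$ are affine, so pushforward commutes with the relevant base change. The $\ao$-action is determined by $\Phi$ through the local formula $\phi r = \theta^{-1}(r)\phi$, and this formula is stable under base change. The same reasoning shows that the isomorphisms $\psi$ on one side correspond exactly to isomorphisms of $\ao$-modules on the other. This yields an isomorphism of stacks $\pnxf \cong \cohn(Y_0,\ao)$.

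Algebraicity then follows immediately, since $\pnxf$ was already shown to be algebraic through the Hom-stack argument. Alternatively, $\cohn(Y_0,\ao)$ is algebraic as the stack of coherent modules over a coherent algebra with finite support, by Lieblich's results.

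The main obstacle is Step 1: verifying carefully that forming the quotient and the algebra $\ao$ commutes with arbitrary base change $T$. I would handle this affine-locally using the Reynolds operator $\frac1d\sum_i\theta^i$, which exists because the characteristic is $0$. It shows $(R\otimes A)^{\theta} = R^\theta\otimes A$ for any $k$-algebra $A$.
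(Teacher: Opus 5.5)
Your argument is correct. For the isomorphism itself, the paper gives no detail: just before the theorem it says that Lemma 2.9 and Corollary 2.10 carry over to families because forming $X/f$, $\mathcal{B}_0(f)$ and $\mathcal{A}_0(f)$ commutes with base change along $T$. Your Steps 1--3 carry out that check, in particular matching the three defining conditions of a length-$n$ family. Flatness transfers because $q_T$ and $p_T$ are affine, the pushforwards to $T$ coincide, and finiteness of supports comes from the determinant trick.

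The genuine difference is in the algebraicity clause, which is the only thing the paper's proof actually addresses.

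\begin{itemize}
\item Your first option transports algebraicity from Section 2.1, where $\mathcal{P}^n(X,f)=\underline{\mathcal{H}\!\mathit{om}}(\mathscr{U},f_{\mathcal{S}}^*\mathscr{U})$ over $\mathcal{C}\!\mathit{oh}^n(X)$. It does not use the new isomorphism at all.
\item The paper argues directly on the algebra side. Over $\mathcal{S}=\mathcal{C}\!\mathit{oh}^n(Y_0)$ it takes the Hom-stack $\underline{\mathcal{H}\!\mathit{om}}(\mathcal{A}_0(f)^{\mathcal{S}}\otimes\mathscr{U},\mathscr{U})$ of candidate actions. It then cuts out $\mathcal{C}\!\mathit{oh}^n(Y_0,\mathcal{A}_0(f))$ as the vanishing locus of the associativity and unit conditions.
\end{itemize}

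The paper's argument is an independent proof that works for any coherent $\mathcal{O}_{Y_0}$-algebra, and it is what makes the ``therefore'' a real consequence of the isomorphism. Your ``alternatively, by Lieblich'' is essentially that argument in citation form. Your transport route is shorter but rests entirely on Section 2.1.

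Two small tightenings:
\begin{itemize}
\item In Step 2, finiteness of $\operatorname{Supp}\mathscr{H}$ over $X/f\times T$ does not by itself give finiteness over $T$, since $X/f\times T\to T$ is not finite. You should add that $\operatorname{Supp}\mathscr{H}$ lies over $\operatorname{Supp}(p_{T*}\mathscr{H})=\operatorname{Supp}(q_{T*}\mathscr{F})=q_T(\operatorname{Supp}\mathscr{F})$, which is finite over $T$.
\item The Reynolds operator is unnecessary in Step 1. Base change along $k\to A$ is flat because $k$ is a field, and invariants form a kernel. Hence $(R\otimes_k A)^{\theta}=R^{\theta}\otimes_k A$ in any characteristic.
\end{itemize}
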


\begin{proof}
    We only explain why $ \pnxf \cong \cohn (Y_0, \ao) $ is an algebraic stack with this alternate approach. The stack $ \mathcal{S} = \cohn (Y_0)$ is algebraic. Let $ \ao^{\mathcal{S}} $ be the pullback of $ \ao$ to $ Y_0 \times S$. Let $ \mathscr{U} $ be the universal family, $ m : \ao^{\mathcal{S}} \otimes \ao^{\mathcal{S}} \to \ao^{\mathcal{S}} $ the multiplication, and $ u : \mathcal{O}_{\mathcal{S}} \to \mathcal{A}_0(f)^{\mathcal{S}} $ be the unit. The stack $ \cohn (Y_0, \ao)$ is the vanishing locus of the two maps $$ \underline{\homm}(\ao^{\mathcal{S}} \otimes \mathscr{U}, \mathscr{U}) \to \underline{\homm}(\ao^{\mathcal{S}} \otimes \ao^{\mathcal{S}} \otimes \mathscr{U}, \mathscr{U}) $$ and $$ \underline{\homm}(\ao^{\mathcal{S}} \otimes \mathscr{U}, \mathscr{U}) \to \underline{\homm}(\mathscr{U}, \mathscr{U}) $$ the first map being $ a \to a(1_{\ao^{\mathcal{S}}} \otimes a) - a(m \otimes 1_{\mathscr{U}}) $ and the second map being $ a \to a(u \otimes 1_{\mathscr{U}}) - 1_{\mathscr{U}}$ for any $ a \in \underline{\homm}(\ao^{\mathcal{S}} \otimes \mathscr{U}, \mathscr{U})$, so it is algebraic.
\end{proof}

\subsubsection{Compactification} For future use, we mention that the algebra $ \ao$ can be extended to an algebra $ \af$ on $ Y = X/f \times \mathbb{P}^1_{z^d}$. Let $ w = z^{-1}$. On $ Y_{\infty} := X/f \times \mathbb{A}^1_{w^d}$ we have the algebra $ \mathcal{A}_{\infty}(f) $ obtained using the automorphisms $ \theta_i$ instead of $ \theta_i^{-1}$ in the paragraph before Lemma 2.9 (abstractly, $ \mathcal{A}_{\infty}(f)$ is isomorphic to $ \mathcal{A}_0(f^{-1})$). Then, $ Y_0$ and $ Y_{\infty}$ glue to give $ Y$, and by Remark 2.8, the algebras $ \ao$ and $ \mathcal{A}_{\infty}(f)$ glue on the intersection, where the multiplication locally is simply that of the twisted Laurent rings.

\subsection{Stability and Hilbert Schemes} Since the moduli spaces $ \pnxf $ of pairs $ (\f, \phi) $ in the previous section are stacks, we introduce an additional section $ s \in H^0(X, \f) $. Given a pair $ (\f, \phi) $ and $ s \in H^0(X, \f)$, we get a new section in $ H^0(X, \f) $ since $ \phi s \in H^0(X, f^* \f) \cong H^0(X, \f) $ and the last isomorphism is canonical. By abuse of notation, we denote the new section by $ \phi s$ as well. Similarly, $ \phi^i s $ denotes the $i$-fold application of this operation.

\begin{defn}
    A triple $ (\f, \phi, s) $ is stable if the sections $ \phi^k s , \: k \ge 0 $ generate the sheaf $ \f$.
\end{defn}

\begin{lem}
    The following statements are equivalent for a triple $ (\f, \phi, s)$ with $ \f$ a 0-dimensional sheaf of length $ n$:
    \begin{itemize}
        \item $ (\f, \phi, s) $ is stable.
        \item There is a 0-dimensional $ \bo$-module $ \mathcal{G}$ of length $ n$ on $ X/f$, and a surjection $ \bo \to \mathcal{G}$.
        \item There is a 0-dimensional $ \ao$-module $ \mathcal{H}$ of length $ n$ on $ Y_0$, and a surjection $ \ao \to \mathcal{H}$.
    \end{itemize}
\end{lem}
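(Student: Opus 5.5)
The plan is to pass through the equivalences of Lemma 2.9 and Corollary 2.10, and translate ``generated by the $\phi^k s$'' into ``cyclic as a module with generator $s$''.

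\textbf{Stable $\Leftrightarrow$ $\bo$-quotient.} Let $\mathcal{G}=q_*\f$, with $\bo$-module structure given by Lemma 2.9. Since $q$ is finite and $\f$ is $0$-dimensional, $\mathcal{G}$ is $0$-dimensional. Its length is $h^0(X/f,\mathcal{G})=h^0(X,\f)=n$, because $\mathcal{G}$ is supported at finitely many points. Under $H^0(X,\f)=H^0(X/f,\mathcal{G})$, the section $s$ gives a global section of $\mathcal{G}$. Hence it gives a morphism of left $\bo$-modules $\bo\to\mathcal{G}$, $b\mapsto b\cdot s$. Locally on an invariant affine $\operatorname{Spec} R$ with $\f=\tilde M$, $\bo$ is $R[z;\theta^{-1}]$ and $z$ acts by $\phi$. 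So the image of this map is the $R$-submodule generated by the $\phi^k s$, $k\ge 0$. Here we use the identification $H^0(f^*\f)\cong H^0(\f)$ used to define $\phi s$, which is exactly how $q_*\phi$ acts on $\mathcal{G}$. Since $q$ is affine, a map into $q_*\f$ is surjective iff the corresponding $R$-module map is surjective. Therefore $\bo\to\mathcal{G}$ is surjective iff the $\phi^k s$ generate $\f$, i.e.\ iff the triple is stable.

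Conversely, suppose we are given a surjection $\bo\to\mathcal{G}$ with $\mathcal{G}$ $0$-dimensional of length $n$. Lemma 2.9 gives a pair $(\f,\phi)$ with $q_*\f=\mathcal{G}$. We take $s$ to be the image of $1$. Then $\f$ is $0$-dimensional of length $n$, and the same local computation shows that the triple is stable. These constructions are mutually inverse, since Lemma 2.9 is a bijective correspondence.

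\textbf{$\bo$-quotient $\Leftrightarrow$ $\ao$-quotient.} We use $p_*\ao=\bo$ and the equivalence of Corollary 2.10. A $0$-dimensional $\ao$-module $\mathcal{H}$ has finite support, hence proper support over $X/f$. So $p_*\mathcal{H}$ is a $0$-dimensional $\bo$-module, and its length equals $h^0(\mathcal{H})$. Conversely, Corollary 2.10 turns a $0$-dimensional $\mathcal{G}$ into a coherent $\mathcal{H}$ with $p_*\mathcal{H}=\mathcal{G}$. Then $\mathcal{H}$ is $0$-dimensional: its support is finite over $X/f$ and maps into the finite set $\operatorname{Supp}\mathcal{G}$.

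It remains to compare surjections. Since $p$ is affine, $p_*$ is exact and faithful on quasicoherent sheaves. So $\ao\to\mathcal{H}$ is surjective iff $p_*\ao=\bo\to p_*\mathcal{H}$ is surjective. Morphisms $\ao\to\mathcal{H}$ of $\ao$-modules correspond to morphisms $\bo\to p_*\mathcal{H}$ of $\bo$-modules. Both sets are identified with global sections of $\mathcal{H}$, namely the image of $1$.

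\textbf{Main obstacle.} The delicate step is the local identification in the first equivalence. One must check that the submodule generated by $s$ under the twisted ring $R[z;\theta^{-1}]$ is exactly the $R$-span of the $\phi^k s$, where $\phi^k s$ is built from the canonical identification $H^0(f^*\f)\cong H^0(\f)$. This holds because $z^k r=\theta^{-k}(r)z^k$, so every element of $R[z;\theta^{-1}]\cdot s$ is an $R$-combination of the $z^k s$.
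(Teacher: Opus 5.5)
Your proposal is correct and takes the same route as the paper, which cites Lemma 2.9 and Corollary 2.10 and notes that generation of $\mathcal{F}$ by the $\phi^k s$ is equivalent to surjectivity of the maps $\mathcal{B}_0(f)\to\mathcal{G}$ and $\mathcal{A}_0(f)\to\mathcal{H}$ induced by $s$. You supply the details the paper leaves implicit: the local computation in $R[z;\theta^{-1}]$, the length and $0$-dimensionality bookkeeping, and the fact that pushforward along the affine projection $p$ is exact and faithful.
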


\begin{proof}
    These equivalences follow immediately from Lemma 2.9 and Corollary 2.10. The condition that the sections generate $ \f$ is equivalent to the morphisms $ \bo \to \mathcal{G} $ and $ \ao \to \mathcal{H}$ induced by $ s$ being surjective.
\end{proof}

Consider the moduli space of stable triples $ (\f, \phi, s)$ where $ \f$ has fixed length $ n$. More formally, let $ \mathscr{H}^n_{X,f}$ be the functor so that for a $ k$-scheme $ T$, $ \hnxf(T)$ consists of equivalence classes of triples $ (\ff, \Phi, s)$ where $ (\ff, \Phi) \in \pnxf(T)$ and the sections $ \Phi^k s$ generate $ \ff$. Two triples $ (\ff, \Phi, s)$ and $ (\ff', \Phi', s')$ are equivalent if there is an isomorphism $ \Psi : \ff \to \ff'$ which makes the obvious diagram commute, and satisfies $ \Psi s = s'$.
\par
By Lemma 2.13, $ \hnxf$ is isomorphic to the Hilbert functor $ \mathscr{H}^n(Y_0, \ao)$ parameterizing quotients $ \ao \to \mathcal{H}$ of $ \ao$-modules, where $ \mathcal{H}$ is 0-dimensional of length $ n$. The existence of Hilbert schemes of noncommutative algebras has been addressed in the context of Simpson's work on sheaves of differential operators in \cite{simpson1994moduli}. However, we give a short proof here to elucidate some key points.

\begin{lem}
    The Hilbert functor $ \mathscr{H}^n(Y_0, \ao) $ is representable by a quasi-projective scheme $ \operatorname{Hilb}^n(Y_0, \ao)$.
\end{lem}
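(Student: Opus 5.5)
The plan is to realize $\operatorname{Hilb}^n(Y_0, \ao)$ as a closed subscheme of an ordinary Quot scheme of the coherent $\mathcal{O}_{Y_0}$-module underlying $\ao$. By Corollary 2.7, $\ao$ is coherent as an $\mathcal{O}_{Y_0}$-module. Since $Y_0 = X/f \times \mathbb{A}^1$ is quasi-projective ($X/f$ is quasi-projective, being a finite quotient of a quasi-projective variety), Grothendieck's theorem gives a quasi-projective scheme $Q = \operatorname{Quot}^n_{Y_0}(\ao)$. This $Q$ represents flat families of $\mathcal{O}_{Y_0}$-module quotients $\ao \to \mathcal{H}$ with $\mathcal{H}$ zero-dimensional of length $n$. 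There is a natural monomorphism of functors $\mathscr{H}^n(Y_0, \ao) \to Q$, obtained by forgetting the algebra structure. It is injective because a quotient of $\ao$ as a left $\ao$-module is determined by its kernel, and the kernel is the same subsheaf in either description.

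The main step is to show that the condition ``the kernel $\mathscr{K} \subset \ao^{Q}$ of the universal quotient is a left ideal'' is a closed condition on $Q$. Consider the composite
\[ \ao^{Q} \otimes \mathscr{K} \xrightarrow{m} \ao^{Q} \to \mathscr{U}, \]
where $\mathscr{U}$ is the universal quotient and $m$ is multiplication. A $T$-point of $Q$ lies in $\mathscr{H}^n(Y_0,\ao)$ exactly when the pullback of this composite to $Y_0 \times T$ vanishes. Pullback is harmless here: $\mathscr{U}$ is flat, so $\mathscr{K}$ pulls back to the kernel of the pulled-back quotient.

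To show the vanishing locus is closed, push forward to $Q$. The support of $\mathscr{U}$ is finite over $Q$, so $\pi_*\mathscr{U}$ is locally free of rank $n$ and $\pi_*$ is exact on sheaves supported there. The composite factors through the restriction of $\ao^Q\otimes\mathscr{K}$ to a finite-over-$Q$ subscheme, for instance the scheme-theoretic support of $\mathscr{U}$. One can also use the fact that a nonzero map into a finite flat sheaf is detected after pushforward. Pushing forward therefore gives a homomorphism $\pi_*(\ao^Q \otimes \mathscr{K}|_{Z}) \to \pi_*\mathscr{U}$, where $\pi_*\mathscr{U}$ is a vector bundle and the source is quasi-coherent. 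The locus where a map from a quasi-coherent sheaf to a vector bundle vanishes universally is closed: locally it is cut out by the coefficients of the images of generators. Alternatively, Corollaire 7.7.8 of \cite{EGAIII_2} gives this directly, as in the proof of Lemma 2.3.

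Once this is done, $\operatorname{Hilb}^n(Y_0,\ao)$ is the resulting closed subscheme of $Q$, hence quasi-projective. The unit axiom is automatic: the quotient of $\ao$ by a left ideal inherits the $\ao$-module structure, and associativity is inherited from $\ao$.

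I expect the closedness step to be the main obstacle, specifically making the vanishing condition representable when the source $\ao^Q\otimes\mathscr{K}$ is not finite over $Q$. If the restriction to the support of $\mathscr{U}$ proves awkward, I would instead note that $\ao$ is locally generated by finitely many sections $a_1,\dots,a_r$ (locally, generators of $R$ over $I$ together with $z$). In that case $\mathscr{K}$ is a left ideal iff it is stable under left multiplication by each $a_j$. Equivalently, each $a_j$ descends to an endomorphism of $\mathscr{U}$, which is the condition that the induced map $\mathscr{K} \to \mathscr{U}$, $k\mapsto a_jk$, vanishes. This is a map from a coherent sheaf to a flat sheaf with finite support, and EGA III 7.7.8 applies after pushforward to the Noetherian base.
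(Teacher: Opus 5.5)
Your proposal is correct, but it takes a different route from the paper.

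\textbf{The paper's route.} The paper never works with a Quot scheme on $Y_0$ directly. It first assumes $X$ projective and passes to the compactification $Y = X/f\times\mathbb{P}^1$ with the glued algebra $\mathcal{A}(f)$. It then cuts the left-ideal locus out of the projective $\operatorname{Quot}^n(Y,\mathcal{A}(f))$ by applying EGA III 7.7.8 to the composite $\mathcal{A}(f)^Q\otimes\mathscr{K}\to\mathscr{E}$; this step uses properness of $Q\times Y\to Q$. Finally it recovers $\operatorname{Hilb}^n(Y_0,\mathcal{A}_0(f))$ as the open preimage of $\operatorname{Sym}^n Y_0$ under the Hilbert--Chow map. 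For quasi-projective $X$ it also needs an $f$-equivariant compactification $(\overline{X},\overline{f})$.

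\textbf{Your route.} You take the quasi-projective Quot scheme on $Y_0$ itself and replace properness by finiteness of the support of the universal quotient. This is more elementary and avoids both compactifications. It is also more general: it works for any coherent sheaf of algebras over a quasi-projective scheme whose structure sheaf is central. What the paper's detour buys is the projective scheme $\operatorname{Hilb}^n(Y,\mathcal{A}(f))$ itself. That scheme is used later, in the connectedness remark and in the final corollary computing $M(q)^{2e(X)}$.

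\textbf{Which closedness argument to use.} Of the versions you sketch, make the restriction to the scheme-theoretic support $Z$ of $\mathscr{U}$ the official one. It works for three reasons:
\begin{itemize}
\item $Z\to Q$ is finite.
\item For every $h\colon T\to Q$, the pullback $\mathscr{U}_T$ is a module over $\mathcal{O}_{Z\times_Q T}$, even though its own annihilator may be larger.
\item Pushforward along a finite morphism is faithful on quasi-coherent sheaves and commutes with arbitrary base change.
\end{itemize}
Together these show that the left-ideal locus is the closed subscheme cut out by the image of
$\pi_*\bigl((\mathcal{A}_0(f)^Q\otimes\mathscr{K})|_Z\bigr)\otimes(\pi_*\mathscr{U})^{\vee}\to\mathcal{O}_Q$,
and that this description is universal.

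By contrast, the side remark that a map into a finite flat sheaf is ``detected after pushforward'' is not valid unless you first restrict the source to $Z$. Pushing the unrestricted source forward along the non-finite projection $Y_0\times Q\to Q$ neither detects the map nor commutes with base change. Once you have restricted to $Z$, the appeal to EGA III 7.7.8 in your fallback is unnecessary.
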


\begin{proof}
    First assume that $ X $ is projective. Consider the compactification $ Y$ and the algebra $ \af$ and denote by $ Q = \operatorname{Quot}^n(Y, \af)$ the Quot scheme that parametrizes 0-dimensional quotients of length n of the underlying $ \mathcal{O}_Y$-module $ \af$. Let $ 0 \to \mathscr{K} \to \af^Q \to \mathscr{E} \to 0$ be the universal quotient on $ Q \times Y$. A quotient $ x = [\af \to \mathcal{E} \to 0] \in Q $ is a quotient of $ \af$-modules iff the kernel $ \mathcal{K}$ is $ \af $-invariant. Therefore, the locus of points $ x \in Q$ which are surjections of $ \af$-modules is the fiber-wise zero locus of the composition $$ \nu : \af^Q \otimes_{\mathcal{O}_Y} \mathscr{K} \to \af^Q \otimes_{\mathcal{O}_Y} \af^Q \to \af^Q \to \mathscr{E} $$ 
    Since $ \mathscr{K}, \mathscr{E}, $ and $ \af^Q $ are all $ Q $-flat and $ Q \times Y \to Q$ is projective, Corollaire 7.7.8 in \cite{EGAIII_2} applies, and we get a coherent sheaf $ \mathcal{N}$ on $ Q$ such that $$ \operatorname{Hom}_{\mathcal{O}_Q}(\mathcal{N}, \mathcal{O}_Q) = \operatorname{Hom}_{\mathcal{O}_{Q \times Y}}(\mathscr{K} \otimes_{\mathcal{O}_Y} \af^Q , \mathscr{E}) $$ The map $ \nu $ corresponds to a map $ n \in \operatorname{Hom}_{\mathcal{O}_Q}(\mathcal{N}, \mathcal{O}_Q) $. The image of $ n$ defines an ideal sheaf, and the closed subscheme defined by this ideal sheaf is exactly the locus of points $ x \in Q$ such that $ \nu_x = 0$. This closed subscheme $ \operatorname{Hilb}^n(Y, \af) $ represents the functor $ \mathscr{H}^n(Y, \af)$. Then $ \mathscr{H}^n(Y_0, \ao) $ is representable by an open subscheme of this closed subscheme - it is the inverse image of $ \operatorname{Sym}^nY_0$ under the Hilbert-Chow morphism $ \mathfrak{c}_n : \operatorname{Hilb}^n(Y, \af) \to \cohn(Y) \to \operatorname{Sym}^n Y $.
    \par
    If $ X $ is quasi-projective, we can first pass to an equivariant compactification $ (\overline{X}, \overline{f})$. That is, there is a projective variety $ \overline{X}$ and an automorphism $ \overline{f} : \overline{X} \to \overline{X}$ such that $ \overline{f}|_{X} = f$. This can be easily obtained by embedding $ X$ in some projective space $ \mathbb{P} $ using a $ f $-invariant very ample line bundle of the form $ \otimes_{i=0}^{d-1} (f^i)^*\mathcal{L} $ for some very ample line bundle on $ X$, and then taking the compactification inside $ \mathbb{P}$. The Hilbert functor is representable for $ (\overline{X}, \overline{f})$, and we then restrict to sheaves supported on $ X$ exactly as in the last line of the previous paragraph. This completes the proof of representability in the general case.
\end{proof}

\begin{rem}
    In the previous lemma, $ \mathcal{N}$ is only a coherent sheaf in general and $ Q$ may not be smooth, so the construction does not provide a (global) Kuranishi model, i.e. $ \operatorname{Hilb}^n(Y, \af)$ is not directly seen to be the zero-locus of a section of a vector bundle on a smooth scheme.
\end{rem}

By abuse of notation, let $ \hnxf$ also denote the scheme representing the functor described by either of the three equivalent conditions in Lemma 2.13.

\begin{rem}
    The locus of triples in $ \hnxf$ in which $ \phi = 0$ is precisely the Hilbert scheme of the surface $ \operatorname{Hilb}^n(X)$. It is a closed subscheme of $ \hnxf$ by Lemma 2.3. In general, $ \hnxf$ is larger than $ \operatorname{Hilb}^n(X)$.
\end{rem}

\begin{ex}
    We give a description of $ \mathscr{H}^1_{X,f}$. For a $ T $-family of stable triples $ (\ff, \Phi, s)$, the support $ \operatorname{Supp} \ff$ is a section of the projection $ p_T : X \times T \to T$. Let $ i : T \to X \times T $ denote the section whose image is $ |\operatorname{Supp} \ff| $. Then $ (p_T)_* \ff = i^* \ff$ is a line bundle on $ T$. The section $ s$ defines a trivialization of $ i^* \ff $ as $ s$ cannot vanish at any point on the support. Since $ \operatorname{Hom}(\ff, (f_X)^* \ff) \cong \operatorname{Hom}(i^* \ff, i^{!} (f_X)^* \ff) $, it follows that such a triple is determined by a map $ j : T \to X $, an isomorphism $ s : \mathcal{O}_T \to \mathcal{O}_T $, and a map $ \mathcal{O}_T \to \mathcal{Q} $ for a sheaf $ \mathcal{Q}$ (determined by $ j$) which is supported only on those points $ t \in T$ such that $ j(t)$ is fixed by $ f$. Therefore, at the level of $ \mathbb{C}$-points, $ \mathscr{H}^1_{X,f}$ is the union of $ X$ (the locus $ \phi=0$) and an $ \mathbb{A}^1$-bundle over the fixed locus of $ f$ which meets $ X$ transversally.
\end{ex}

\begin{rem}
    In the spirit of \cite{Hartshorne1966}, we conjecture that $ \hnxf$ is connected for all $ n \ge 1$. The statement is not true for the compactification $ \operatorname{Hilb}^n(Y, \af)$, as can be seen by considering $ \operatorname{Hilb}^1(Y, \af)$ for some $ f$ without fixed points - in this case, $ \operatorname{Hilb}^1(Y, \af)$ is a disjoint union of two copies of $ X$. However, we also suspect that this is the only kind of example in which connectedness fails to hold for the compactification.
\end{rem}

\begin{rem}
    On $ \pnxf$ and $ \hnxf $, there is a natural $ \mathbb{G}_m$-action that scales the twisted endomorphism $ \phi$. If $ \phi$ is an isomorphism, then $ (\f, \phi, s) \in \hnxf $ cannot be a $ \mathbb{G}_m$-fixed point. Indeed, if it were, this would imply that for all $ \lambda $, $ \det(\lambda \phi) = \det(\psi^{-1}) \det(\phi) \det(\psi) $ for linear endomorphisms $ \phi $ and $ \psi $ of the finite dimensional vector space $ H^0(X, \f)$. This is impossible if $ \det(\phi) \neq 0$. It follows that if a quotient $ \af \to \mathcal{H} \to 0 $ is supported in $ X/f \times \mathbb{G}_m$, then it is not a fixed point of this $ \mathbb{G}_m$-action because such a quotient corresponds (by Lemma 2.13) to a triple $ (\f, \phi, s)$ in which $ \phi$ is an isomorphism.
\end{rem}

\section{Euler Characteristics}

\newcommand{\cx}{\mathbb{C}}
\newcommand{\mna}{\mathcal{M}^n_A}

In this section, $ X$ is a smooth projective surface over $ k = \cx $. The automorphism $ f$ remains of finite order $ d$. We prove the formula for the generating function for the Euler characteristics of the Hilbert schemes in Section 2.

\subsection{Hilbert Schemes of Associative Algebras} In \cite{larsen2012hilbertschemespointsassociative}, Larsen and Lunts have constructed Hilbert schemes of points of associative algebras. We start with a brief description of their construction.
\par
Let $ A$ be a finitely generated associative algebra over $ \cx $. For a $ \cx$-scheme $ T$, let $ A^T$ denote the sheaf of algebras associated to $ A \otimes_{\cx} \mathcal{O}_T$. The Hilbert scheme of left ideals of codimension $ n$ of $ A$ parametrizes $ A $-module quotients $ A \to Q \to 0$ where $ \dim_{\cx} Q = n $.

\begin{defn}
    (The Hilbert functor of left ideals of codimension $ n$) The contravariant functor $ \mathcal{M}^n_A $ from the category of schemes over $ \cx$ to sets is defined as follows. For a $ \cx$-scheme $ T$, $ \mna(T)$ is the set of equivalence classes of pairs $ (M,v)$ such that
    \begin{itemize}
        \item M is a left $ A^T$-module.
        \item As an $ \mathcal{O}_T$-module, $ M$ is locally free of rank $ n$.
        \item The section $ v \in H^0(T,M)$ generates $ M$ as an $ A^T$-module.
    \end{itemize}
    Two pairs $ (M,v)$ and $ (M',v')$ are equivalent if there is an isomorphism $ \psi : M \to M'$ of $ A^T$-modules such that $ \psi(v)=v'$. For a morphism $ T' \to T$ of schemes, the map $ \mna(T) \to \mna(T') $ is the obvious map of pullback of sheaves.
\end{defn}

\begin{prop}
    (Larsen, Lunts) The functor $ \mna$ is represented by a finite type $ \cx$-scheme $ H^n_{A}$.
\end{prop}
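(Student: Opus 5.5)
Choose generators $a_1,\dots,a_m$ of $A$, so that $A$ is a quotient of the free algebra $F=\mathbb{C}\langle x_1,\dots,x_m\rangle$ by a two-sided ideal $J$. The plan is to build $H^n_A$ as a quotient of an explicit quasi-affine variety by a free $GL_n$-action.

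Consider the affine space $W=\operatorname{Mat}_{n}(\mathbb{C})^{m}\times \mathbb{C}^n$ of tuples $(X_1,\dots,X_m,v)$. Inside $W$, let $Z$ be the closed subscheme cut out by the equations $P(X_1,\dots,X_m)=0$ for $P\in J$. By the Hilbert basis theorem on the commutative coordinate ring, finitely many such matrix-entry equations suffice, so $Z$ is a closed subscheme of finite type. Its $T$-points are exactly $A^T$-module structures on $\mathcal{O}_T^n$ together with a section $v$.

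Next, let $U\subset Z$ be the locus where $v$ is cyclic, meaning that the vectors $w(X)v$, for words $w$ of length $<n$, span $\mathbb{C}^n$. This is an open condition, given by the non-vanishing of some $n\times n$ minor of a matrix whose columns are these finitely many vectors. Word length $<n$ suffices because the chain of spans of words of length $\le k$ stabilizes by step $n-1$. The group $GL_n$ acts on $U$ by $g\cdot(X_i,v)=(gX_ig^{-1},gv)$, and this action is free: a $g$ fixing $v$ and commuting with all $X_i$ fixes every $w(X)v$, hence $g=1$. By construction, $\mathcal{M}^n_A$ is the quotient functor $[U/GL_n]$, using that $M$ is locally free and can be trivialized locally on $T$.

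The main step is showing that $U\to U/GL_n$ exists as a scheme and is a principal $GL_n$-bundle, so that the quotient represents the functor. I will do this with explicit slices. For each set $S$ of $n$ words $w_1,\dots,w_n$ of length $<n$, let $U_S\subset U$ be the open subset where the vectors $w_j(X)v$ form a basis. On $U_S$ the matrix $g_S=[w_1(X)v\,|\,\cdots\,|\,w_n(X)v]$ is invertible and equivariant. Hence $U_S\cong GL_n\times V_S$, where $V_S=\{g_S=1\}$ is the closed subscheme where the $w_j(X)v$ are the standard basis vectors; concretely, the isomorphism sends $u$ to $(g_S(u), g_S(u)^{-1}\cdot u)$. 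The $V_S$ are affine, and they glue along the open overlaps $V_{S}\cap U_{S'}$, since the transition is given by the regular function $g_{S'}$. This produces a separated finite type scheme $H^n_A$ with $U\to H^n_A$ a Zariski-locally trivial $GL_n$-torsor.

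Representability follows. Given $(M,v)\in\mathcal{M}^n_A(T)$, cover $T$ by opens on which $\{w_j v\}_{j}$ is a basis for some $S$; such opens cover $T$ by Nakayama's lemma applied fiberwise. On each piece this gives a map to $V_S$, and these maps agree on overlaps, which is where I expect most of the bookkeeping to lie. Separatedness can alternatively be obtained from GIT as the stable locus for the character $\det$, but the slice argument avoids this.
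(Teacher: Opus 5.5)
The paper gives no proof of this proposition. It is quoted from Larsen--Lunts and used only as a black box, for the algebras $B$ of Proposition 3.5, so there is no internal argument to compare against. Your proof is correct and self-contained. It is the standard construction: $H^n_A$ is the locus $U$ of cyclic pairs $(X,v)$ in $\operatorname{Rep}_n(A)\times\mathbb{C}^n$ modulo the free $GL_n$-action, covered by affine charts $V_S$ indexed by $n$-tuples of words of length $<n$. It gives finite type, which is all the statement asks for, and also Zariski-local triviality of $U\to H^n_A$.

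Two small remarks.

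\textbf{The closing bookkeeping is automatic.}
\begin{itemize}
\item Pairs $(M,v)$ for which $w_1v,\dots,w_nv$ is a basis form an open subfunctor of $\mathcal{M}^n_A$, represented by $V_S$, because that basis is a canonical trivialization.
\item Your fibrewise Nakayama argument shows these subfunctors cover $\mathcal{M}^n_A$.
\item $\mathcal{M}^n_A$ is a Zariski sheaf, since a cyclic pair has no nontrivial automorphisms. Gluing isomorphisms are therefore unique and satisfy the cocycle condition.
\item A Zariski sheaf covered by representable open subfunctors is representable.
\end{itemize}

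\textbf{Separatedness is asserted but not shown.} Gluing affine schemes along open subsets does not by itself give a separated scheme, so the slices as written do not prove it. The statement does not require separatedness, but it is easy to supply via the valuative criterion. Suppose $(M,v)$ and $(M',v')$ over a DVR $R$ with fraction field $K$ become isomorphic over $K$ via an $A\otimes K$-linear $\psi$ with $\psi(v)=v'$. Then
$\psi(M)=\psi\bigl((A\otimes R)v\bigr)=(A\otimes R)v'=M'$
inside $M'\otimes_R K$, so $\psi$ restricts to an isomorphism over $R$.

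Equivalently, write $x=(X,v)$ and $y=(Y,w)$. The graph of the transition $x\mapsto g_{S'}(x)^{-1}\cdot x$ is the closed subscheme of $V_S\times V_{S'}$ cut out by
$g_S(y)\,g_{S'}(x)=1$, $Y_i\,g_S(y)=g_S(y)\,X_i$, $w=g_S(y)\,v$.
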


%\begin{rem}
    %(Relation to McKay quivers) Let $ G$ be a finite group and $ \rho_1, \rho_2, \ldots, \rho_l$ be its irreducible representations. If $ V$ is a representation of $ G$, the McKay quiver associated to $ (G,V)$ is defined as follows: The vertices of the quiver are the irreducible representations and the number of edges from $ \rho_i$ to $ \rho_j$ is the multiplicity of $ \rho_j$ in $ V \otimes \rho_i$.
    %\par
    %If $ G$ is abelian, the irreducible representations are one-dimensional, arising from characters. Let $ G \cong \widehat{G}$ be a non-canonical isomorphism between $ G$ and its group of characters. For every irreducible representation $ \rho_i $, let $ \chi_i $ be the associated character and $ f_i \in G$ the inverse image under the isomorphism. If $ S = (f_{i_1}, f_{i_2}, \ldots, f_{i_r})$, let $ V = \sum_{k=1}^{r} \rho_{i_k}$. Then, in the McKay quiver associated to $ (G,V)$, the number of edges from $ \rho_i$ to $ \rho_j$ is the same as the number of $ k$ such that $ f_j = f_{i_k}f_i$. So, ignoring the relations, the Cayley quiver of $ (G,S)$ can be non-canonically identified with the McKay quiver of $ (G,V)$.
%\end{rem}

Let $ c \ge 1 $ be any integer, and let $ D = \prod_{i=0}^{c-1} \cx e_i$. Consider a non-commutative ring of the form \[ B = D \langle X_1,X_2,X_3 \rangle /(X_1X_2 - \alpha X_2X_1, X_2X_3 - \beta X_3X_2, X_3X_1 - \gamma X_1X_3) \] where $ \alpha, \beta, \gamma \in D^{\times} $ and for every $ i $, the following relations hold. \begin{itemize}
    \item $ e_iX_1 = X_1e_i $
    \item $ e_iX_2 = X_2e_i $
    \item $ e_{i+1}X_3 = X_3 e_i $
\end{itemize} We assume that $ B$ is an iterated twisted polynomial ring starting from $ D$, and by successively adjoining indeterminates $ X_1, X_2, X_3$ twisted by some automorphisms (the twist applied when adjoining $ X_1$ is the identity automorphism because of the first relation $ e_iX_1 = X_1e_i $). In that case, every $ b \in B $ can be written uniquely as a sum of monomials of the form $ e_iX_1^{m_1}X_2^{m_2}X_3^{m_3} $, that is, $ X_1,X_2,X_3$ can be rearranged to appear in lexicographical order. We will need to compute the Euler characteristics of the Hilbert schemes $ H^n_{B}$. The following proposition of Białynicki-Birula in \cite{BialynickiBirula1973OnFP} will be useful.

\begin{prop}
    Let $ X$ be a finite-type $ \cx $-scheme with an action of the multiplicative group $ \mathbb{G}_m$. Let $ X^{\mathbb{G}_m}$ be the scheme of fixed-points of the action. Then $ e(X) = e(X^{\mathbb{G}_m})$.
\end{prop}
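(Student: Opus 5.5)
The proof will go by excision: split $X$ into the fixed locus and its complement, and show that the complement has Euler characteristic zero. Throughout I will replace $X$ by its reduced scheme, which changes neither the topology of $X^{an}$ nor the fixed locus as a set. The fixed locus $X^{\mathbb{G}_m}$ is closed in $X$. By the excision property from the Conventions, $e(X) = e(X^{\mathbb{G}_m}) + e(U)$ with $U = X - X^{\mathbb{G}_m}$. So it suffices to prove $e(U) = 0$. Here $U$ is a $\mathbb{G}_m$-invariant finite-type scheme on which every point has a proper, hence finite, stabilizer $\mu_m \subset \mathbb{G}_m$.

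Next I would stratify $U$ by stabilizer type. For each $m \ge 1$, the fixed locus $U^{\mu_m}$ of the finite subgroup $\mu_m$ is closed and $\mathbb{G}_m$-invariant. By Noetherianity only finitely many distinct such loci occur. Hence $U$ is a finite disjoint union of locally closed invariant strata $U_m$, each consisting of the points whose stabilizer is exactly $\mu_m$. On $U_m$ the quotient torus $\mathbb{G}_m/\mu_m \cong \mathbb{G}_m$ acts freely. Applying excision again, it suffices to show $e(V) = 0$ for any finite-type scheme $V$ carrying a free $\mathbb{G}_m$-action.

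For a free action I would argue by Noetherian induction on $V$. By Rosenlicht's theorem there is a dense open invariant $V' \subset V$ admitting a geometric quotient $\pi: V' \to V'/\mathbb{G}_m$. After shrinking $V'$ further, $\pi$ is a principal $\mathbb{G}_m$-bundle. Since $\mathbb{G}_m$ is special (Hilbert 90), this bundle is Zariski-locally trivial, so in particular it is locally trivial in the analytic topology with fiber $\mathbb{C}^*$. The multiplicative property then gives $e(V') = e(V'/\mathbb{G}_m)\, e(\mathbb{C}^*) = 0$. Excision reduces the claim to $V - V'$, which is invariant, closed, carries a free action, and has smaller dimension or fewer components, so the induction terminates.

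The main obstacle is this last step: producing the stratification into pieces that are genuine locally trivial $\mathbb{C}^*$-fibrations, which requires both Rosenlicht's generic quotient and the passage from a free action to a principal bundle. If that proves awkward, I would fall back on a purely topological route. The circle $S^1 \subset \mathbb{C}^*$ has the same fixed points as $\mathbb{G}_m$, because the orbit map of a point fixed by $S^1$ is constant on a Zariski-dense subset. Using the compactly supported Euler characteristic, which agrees with $e$ for complex varieties, one has $\chi_c(X) = \chi_c(X^{S^1})$ for a compact Lie group action on a space with finitely many orbit types. The reason is that each non-fixed orbit type contributes a fibration whose fiber is a circle, which has $\chi_c = 0$.
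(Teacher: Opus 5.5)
The paper does not prove this proposition. It quotes it from Bia\l{}ynicki-Birula \cite{BialynickiBirula1973OnFP} and uses it as a black box in Section 3. So your argument is a self-contained proof where the paper has none, and it follows the standard route. You excise the fixed locus, stratify the complement by stabilizer $\mu_m$, pass to free actions of $\mathbb{G}_m/\mu_m \cong \mathbb{G}_m$, and kill each free piece using a generic quotient, multiplicativity, and $e(\mathbb{C}^*)=0$. What this buys is that the Euler characteristic computations then rest only on the excision and multiplicativity properties listed in the Conventions, plus standard quotient theory. What the citation buys is brevity.

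Two steps are thinner than you have written them.

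First, ``by Noetherianity only finitely many stabilizer types occur'' does not follow from Noetherianity alone: a Noetherian space can contain infinitely many pairwise disjoint nonempty closed subsets. You need an algebraic input. The stabilizer group scheme $\Sigma \subset \mathbb{G}_m \times U$ is quasi-finite over $U$. Hence it is finite and flat over a dense open subset of each irreducible component, and there its fibre degree $m$ is constant. Noetherian induction then gives finitely many types. Alternatively, use that fibre cardinalities of a quasi-finite morphism of finite type are bounded. Your $S^1$ fallback needs exactly the same input, since its hypothesis of finitely many orbit types is this statement.

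Second, Rosenlicht's theorem is stated for irreducible varieties. Apply it to an irreducible component minus the others, which is open and invariant because $\mathbb{G}_m$ is connected. Also, upgrading the generic geometric quotient of a free action to a principal bundle uses characteristic zero and Zariski's main theorem. The bijective map $\mathbb{G}_m \times V' \to V' \times_{V'/\mathbb{G}_m} V'$ becomes an isomorphism only after shrinking to a smooth locus.

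You can bypass that upgrade, and the stratification as well. Each fibre of the generic quotient is a single orbit $\mathbb{G}_m/\mu_m$, homeomorphic to $\mathbb{C}^*$. Generic topological local triviality of algebraic morphisms (Verdier) then gives a locally trivial $\mathbb{C}^*$-fibration over a dense open subset of the quotient, whatever the stabilizers are. With these repairs your proof is complete.
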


\begin{defn}
    A left ideal $ I \subset B $ is monomial if for any $ b \in I $, every monomial in the (unique) representation of $ b$ as a sum $ \sum e_iX_1^{m_1}X_2^{m_2}X_3^{m_3} $ is also in $ I$.
\end{defn}

\begin{prop}
    The Euler characteristic of $ H^n_B $ is given by the formula $$ e(H^n_B) = \sum_{\sum_{i=0}^{c-1} n_i = n  ,n_i \ge 0} \left( \prod_{i=0}^{c-1} p_3(n_i) \right)$$ Consequently, the generating series $ \sum_{n \ge 0} e(H^n_B)q^n $ equals $ M(q)^{c}$.
\end{prop}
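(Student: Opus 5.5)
We plan to compute $e(H^n_B)$ by torus localization, using the proposition of Białynicki-Birula repeatedly. The algebra $B$ carries a $(\mathbb{G}_m)^3$-action: $(\lambda_1,\lambda_2,\lambda_3)$ acts by $X_j \mapsto \lambda_j X_j$ and fixes each $e_i$. This is a well-defined algebra automorphism, because every defining relation ($X_1X_2-\alpha X_2X_1$, etc., and the commutation relations with the $e_i$) is homogeneous in each $X_j$ separately. The action induces an action on the functor $\mathcal{M}^n_B$, by twisting the $B^T$-module structure on $M$ and keeping $v$. By the Larsen--Lunts proposition it therefore gives an action on the finite-type scheme $H^n_B$.

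\textbf{Step 1: reduce to the fixed locus.} Choose a generic one-parameter subgroup $\lambda \mapsto (\lambda^{w_1},\lambda^{w_2},\lambda^{w_3})$ with the weights $w_j$ chosen so that distinct monomials of total degree $\le n$ get distinct weights. Białynicki-Birula gives $e(H^n_B)=e((H^n_B)^{\mathbb{G}_m})$.

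\textbf{Step 2: identify the fixed points.} Points of $H^n_B$ are left ideals $I\subset B$ of codimension $n$, via $I=\ker(B\to M, b\mapsto bv)$. The key claim is that the fixed points are exactly the monomial left ideals, and that they are finite in number; the fixed locus is then a finite set of reduced points, or at least has the same Euler characteristic as one.

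A fixed ideal is a graded subspace for the weight grading. Since $D$ has weight $0$, each weight space is spanned by the elements $e_iX^m$ for a single monomial $X^m$ (using the lexicographic normal form). So a weight-homogeneous element has the form $\sum_i c_i e_i X^m$. Because $I$ is a left ideal, multiplying on the left by $e_i$ shows that each $c_ie_iX^m$ lies in $I$. Hence $I$ is monomial. Conversely, monomial ideals are clearly fixed.

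Finiteness follows because a codimension-$n$ monomial ideal is determined by the finite set of basis monomials $e_iX^m$ not in $I$, all of which have degree less than $n$. I expect the scheme-theoretic fixed locus might be nonreduced, but Euler characteristic only sees the $\mathbb{C}$-points, so counting fixed points suffices.

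\textbf{Step 3: count monomial ideals.} For a monomial left ideal $I$ and each $i$, let $S_i=\{m\in\mathbb{Z}_{\ge0}^3 : e_iX^m\notin I\}$. The main obstacle is the bookkeeping of how left multiplication by $X_1,X_2,X_3$ moves basis monomials, since $X_3$ shifts the idempotent index.

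First, $X_j e_i X^m$ is a nonzero scalar times $e_{i'}X^{m+\epsilon_j}$, where $i'=i$ for $j=1,2$ and $i'=i+1$ for $j=3$. This uses $e_{i+1}X_3=X_3e_i$ and reordering into lexicographic form; reordering only introduces units from $\alpha,\beta,\gamma\in D^\times$, using uniqueness of the normal form in the iterated twisted polynomial ring.

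To get the right count, reindex: for the idempotent component "$j$", record the monomials $e_{j+m_3}X^m$, with indices taken mod $c$. Define
\[ T_j=\{m : e_{j+m_3}X^m\notin I\}. \]
Left multiplication by each $X_k$ preserves the label $j$ and sends $m\mapsto m+\epsilon_k$. Closure of $I$ under left multiplication is then exactly the condition that each complement $T_j$ is an order ideal, i.e.\ a 3D partition. The decomposition $B=\bigoplus_j Be_j$ makes the choices for different $j$ independent, since $I=\bigoplus_j Ie_j$ for a monomial ideal. Therefore monomial ideals of codimension $n$ correspond bijectively to $c$-tuples of 3D partitions with sizes summing to $n$. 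This gives
\[ e(H^n_B)=\sum_{\sum_{i=0}^{c-1} n_i=n,\ n_i\ge0}\ \prod_{i=0}^{c-1} p_3(n_i), \]
and multiplying the generating series yields $\sum_{n\ge0} e(H^n_B)q^n=M(q)^c$.
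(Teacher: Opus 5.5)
Your overall strategy matches the paper's. You localize with Bia\l{}ynicki-Birula to the monomial left ideals, then count them by moving the idempotent to the right. Your sets $T_j$, via $X^me_j=e_{j+m_3}X^m$, are exactly the paper's $M_j$, and your Step 3 is fine.

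\textbf{The gap is in Step 2.} For a one-parameter subgroup with integer weights $w$, the map $m\mapsto w\cdot m$ is never injective on $\mathbb{Z}_{\ge0}^3$. So it is false that each weight space of $B$ is $\bigoplus_i\mathbb{C}e_iX^m$ for a single $m$. A weight-homogeneous element of a fixed ideal $I$ can combine a monomial of degree $<n$ with higher-degree monomials of the same weight, and left multiplication by the $e_i$ does not separate them. Your genericity hypothesis only controls monomials of degree $\le n$. You are therefore tacitly assuming, without proof, that a $\lambda$-fixed ideal of codimension $n$ contains every monomial of degree $\ge n$.

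\textbf{How to close it.} That fact holds and is easy to supply.
Take all $w_j>0$. Then $M=B/I$ is graded in nonnegative weights. The span $J$ of the $e_iX^m$ with $|m|\ge1$ is a two-sided ideal living in strictly positive weights. So whenever $J^kM\neq0$, its lowest-weight component is not contained in $J^{k+1}M$. The chain $M\supsetneq JM\supsetneq J^2M\supsetneq\cdots$ therefore strictly decreases until it reaches $0$, and since $\dim M=n$ this gives $J^nM=0$. In particular $J^n\cdot\bar1=0$, i.e.\ $J^n\subseteq I$. Every monomial of degree $\ge n$ lies in $J^n$, since reordering only introduces units.
Hence $I=(I\cap B_{<n})\oplus B_{\ge n}$, where $B_{<n}$ is spanned by the $e_iX^m$ with $|m|<n$. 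On $B_{<n}$ your genericity does make every weight space of the form $\bigoplus_i\mathbb{C}e_iX^m$, and the rest of your argument goes through.

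\textbf{Comparison with the paper.} The paper avoids this step by scaling $X_3$, then $X_2$, then $X_1$ one at a time, applying Bia\l{}ynicki-Birula to the successive fixed loci. For a single-variable scaling, the Vandermonde argument only involves the finitely many terms of one element of $I$, so no degree bound is needed. Your corrected route uses one localization plus a graded Nakayama step; the paper's uses three localizations, each with a trivial separation step.
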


\begin{proof}
    Consider the $ \mathbb{G}_m$-action on $ B$ which scales $ X_3$. It induces a natural action on the $ \cx $-points of $ H^n_B $, or equivalently on the left ideals of $ B $ of codimension $ n$. Let $ I \subset B$ be a $ \mathbb{G}_m$-invariant left ideal. For $ b \in I $, write $ b = \sum_{i =0}^{c-1} e_ib_i(X_1, X_2, X_3)$ where $ b_i(X_1, X_2, X_3) $ denotes the sum of all the monomials which have a coefficient of $ e_i$ (it is not a commutative polynomial by itself). Then $ e_ib_i(X_1, X_2, X_3) \in I$ for every $ i$, so we reduce to this case. Write $ b_i(X_1, X_2, X_3) $ as $ \sum_l c_{l}(X_1,X_2)X_3^l $. The $ \mathbb{G}_m$-invariance of $ I$ implies that $ \sum_l \lambda^l (e_ic_l(X_1,X_2)X_3^l) \in I $ for every $ \lambda \in \cx^*$. There are only finitely many distinct indices $ l$ in the sum, so by taking equally many distinct values of $ \lambda$ and using the invertibility of the Vandermonde matrix, we find that each term in the sum $ e_ic_l(X_1, X_2)X_3^l \in I$. In other words, $ I$ is invariant iff it is `monomial in the last variable'.
    \par
    The $ \mathbb{G}_m$-action on $ B$ which scales $ X_2 $ restricts to an action on the $ \mathbb{G}_m$-fixed points of the action above. So we can apply the method of the previous paragraph two more times, first scaling $ X_2$ and then scaling $ X_1$. At the end, the fixed points are exactly the monomial ideals. By Proposition 3.3, the Euler characteristic remains constant while passing to the $ \mathbb{G}_m$-fixed points at any step, so $ e(H^n_B) = \# \text{ of monomial ideals of codim n}$. 
    \par
    For a monomial ideal $ I$, let $ M$ be all the monomials in $ I$. Using the swapping relations, rewrite every monomial so that the idempotent occurs at the end. Write $ M$ as a disjoint union $ \bigsqcup_{i=0}^{c-1} M_i$ where $ M_i$ is the set of monomials ending in $ e_i$. Note that left-multiplication by elements of $ B $ maps $ M_i$ to itself. This is clear if we multiply on the left by an indeterminate $ X_i$. On the other hand, multiplying on the left by an idempotent of $ D$ either kills any monomial or leaves it invariant. Thus, each $ M_i $ determines a 3D-partition $ \pi_i$ of size $ |\pi_i|=n_i \ge 0$, and as a vector space, the monomials in $ \pi_0, \pi_1, \ldots, \pi_{c-1} $ together give a basis of $ B/I$ as a vector space. The converse is also true - given 3D-partitions $ \pi_0, \pi_1, \ldots , \pi_{c-1}$, reversing the construction yields a monomial left ideal of codimension $ \sum_i |\pi_i|$.
    \par
    It follows that the number of monomial ideals of $ B $ of codimension $ n$ is obtained by counting the number of 3D-partitions $ \pi_i $ such that $ \sum_{i=0}^{c-1} |\pi_i|= n $. For a fixed tuple $ (n_i) $, the count is $ \prod_{i=0}^{c-1} p_3(n_i) $, so we immediately get the stated expressions for $ e(H^n_B)$ and the generating series.
\end{proof}

\subsection{Local Models for the Moduli Spaces}

For the computations in this section, it will be convenient to take the definition of $ \mathscr{H}^n_{X,f}$ as parameterizing quotients $ \bo \to \mathcal{G} \to 0$ on $ X/f$ (see Lemma 2.13). Let  $ \mathfrak{c}_n : \mathscr{H}^n_{X,f} \to \cohn(X/f) \to \operatorname{Sym}^n(X/f) $ be the Hilbert-Chow morphism. 
\par
Let $ X/f \to \operatorname{Sym}^n X/f$ be the map $ P \to n[P]$ and let $ \mathscr{H}^n_{X,f}(n)$ be the inverse image $ \mathfrak{c}_n^{-1}(X/f) $. That is, if $[\bo \to \mathcal{G} \to 0] \in \mathscr{H}^n_{X,f}(n) $, the quotient $ \mathcal{G} $ is supported at a single point of $ X/f$, or equivalently, $ \mathcal{F}$ is supported in an orbit of $ f$ on $ X$.
\par
We first consider the two simple `local' cases in which the action may be free, or may have a 1-dimensional fixed locus.

\begin{lem} (The local case of a free action)
    Let $ X = \bigsqcup_{i=1}^d \mathbb{A}^2 $ and $ f : X \to X$ be an automorphism of order $ d$ that cyclically permutes the disjoint copies of $ \mathbb{A}^2$. The quotient $ X/f $ is $ \mathbb{A}^2 $. Then, there is a canonical isomorphism $ \mathscr{H}^n_{X,f}(n) \cong \mathfrak{c}_n^{-1}(n[O]) \times X/f$, where $ O$ is the origin in $ X/f = \mathbb{A}^2$. Consequently, $$ e(\mathfrak{c}_n^{-1} (n[O])) = \sum_{\sum_{i=0}^{d-1} n_i = n, n_i \ge 0} (\prod_{i=0}^{d-1} p_3(n_i)) $$
\end{lem}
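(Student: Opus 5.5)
We work with the description of $\mathscr{H}^n_{X,f}$ as the scheme of quotients $\bo \to \mathcal{G} \to 0$ on $X/f = \mathbb{A}^2$. Write $X = \bigsqcup_{i=0}^{d-1} U_i$ with $U_i = \operatorname{Spec} \cx[x,y]$ and $f$ mapping $U_i$ to $U_{i+1}$ (indices mod $d$), identified compatibly. Then $R = \prod_{i} \cx[x,y] e_i$ and $\theta$ permutes the idempotents cyclically. The invariants are $R^\theta = \cx[x,y]$, embedded diagonally. Hence $\bo = R[z;\theta^{-1}]$ is the algebra
\[ D\langle x,y,z\rangle/(xy-yx,\ yz - zy,\ zx - xz), \]
with $x,y$ central over $D$ and $z e_i = e_{i-1} z$ (or $e_{i+1}$, depending on conventions). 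This is exactly an algebra $B$ of the form in Proposition 3.5 with $c=d$ and $\alpha=\beta=\gamma=1$.

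The first step is the translation: $\mathscr{H}^n_{X,f}$ is the Larsen--Lunts Hilbert scheme $H^n_B$, and $\mathfrak{c}_n$ records the support of the $\cx[x,y]$-module $\mathcal{G}$. Using the Euler characteristic over $\operatorname{Sym}^n$, I would not need the whole space; I only need $\mathfrak{c}_n^{-1}(X/f)$, the quotients supported at a single point $P$ of $\mathbb{A}^2$.

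The second step is to produce the product decomposition by translation. The additive group $\mathbb{A}^2$ acts on $X$ by translating each copy simultaneously. This action commutes with $f$, so it acts on $\bo$ by algebra automorphisms $x \mapsto x+a$, $y \mapsto y+b$ fixing $z$ and the $e_i$. It therefore acts on $\mathscr{H}^n_{X,f}$, compatibly with translation on $\operatorname{Sym}^n(X/f)$. The map
\[ \mathfrak{c}_n^{-1}(n[O]) \times \mathbb{A}^2 \to \mathscr{H}^n_{X,f}(n), \qquad (Q, P) \mapsto \tau_P(Q), \]
has inverse $Q' \mapsto (\tau_{-P}(Q'), P)$, where $P = \mathfrak{c}_n(Q')$. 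To make this an isomorphism of schemes and not just a bijection on points, I would define it functorially on $T$-points: a family supported set-theoretically along a section $P : T \to \mathbb{A}^2$ is translated by $-P$. Here I should work with the reduced structure, or with the fiber product defining $\mathfrak{c}_n^{-1}$, which suffices for Euler characteristics in any case. By the multiplicativity property, $e(\mathscr{H}^n_{X,f}(n)) = e(\mathfrak{c}_n^{-1}(n[O])) \cdot e(\mathbb{A}^2)$, and it remains to compute $e(\mathfrak{c}_n^{-1}(n[O]))$ directly.

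The third step computes this Euler characteristic, and I expect it to be the main point. I would rerun the torus argument of Proposition 3.5. The torus $(\cx^*)^3$ scales $x,y,z$ and preserves both $\mathfrak{c}_n^{-1}(n[O])$ and the relations, since scaling fixes the origin. Applying Proposition 3.3 three times reduces the count to the monomial left ideals of codimension $n$. Each monomial ideal has quotient supported at $O$ automatically, because $x$ and $y$ act nilpotently on a finite-dimensional monomial quotient. So the fixed points of $\mathfrak{c}_n^{-1}(n[O])$ coincide with all fixed points of $H^n_B$. Proposition 3.5 then counts these as $d$-tuples of 3D-partitions, giving
\[ e(\mathfrak{c}_n^{-1}(n[O])) = \sum_{\sum_{i=0}^{d-1} n_i = n,\ n_i \ge 0}\ \prod_{i=0}^{d-1} p_3(n_i). \]
The delicate points are checking that the torus action preserves the punctual locus, and checking the scheme-theoretic, not just set-theoretic, nature of the product isomorphism in the second step. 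For Euler characteristics, the set-theoretic statement together with constructibility suffices.
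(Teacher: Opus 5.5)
Your proposal is correct and follows the paper's proof: you identify $\mathcal{B}_0(f)$ with the algebra $B$ of Proposition 3.5 (with $c=d$), use simultaneous translations on the copies of $\mathbb{A}^2$ to get the product decomposition, and compute the Euler characteristic by torus localization to monomial ideals, all of which are supported at the origin. The only difference is one of ordering. You apply Proposition 3.3 directly to the punctual fiber $\mathfrak{c}_n^{-1}(n[O])$, while the paper applies it to $\mathscr{H}^n_{X,f}(n)\subset\mathscr{H}^n_{X,f}$ and then divides out the $\mathbb{A}^2$ factor using $e(\mathbb{A}^2)=1$; your version makes the product decomposition unnecessary for the Euler characteristic formula.
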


\begin{proof}
    The group $ \mathbb{A}^2$ acts on $ X$ and $ X/f$ by translation, and this action commutes with the quotient map. So there is an action of $ \mathbb{A}^2$ on $ \mathscr{H}^n_{X,f}$, which is also compatible under the first two equivalent characterizations of Lemma 2.13. Using this action, a quotient supported at $ P \in X/f$ can be translated to a quotient supported at $ O$. Then, the natural map $ \mathscr{H}^n_{X,f}(n) \to \mathfrak{c}_n^{-1}(n[O]) \times X/f $ is easily seen to be an isomorphism.
    \par
    For the second part, since $ e(\hnxf(n)) = e(\mathfrak{c}_n^{-1} n[O]) $, it suffices to prove the statement for $ e(\hnxf(n)) $. In this local case, $ \bo$ is the sheaf of algebras associated to $ B = (\prod_{i=0}^{d-1} \cx)\langle x,y,z \rangle/(xy-yx, yz-zy, zx -xz)$ along with the relations $ e_{i+1}z = ze_i $. This description of $ B$ easily follows from the fact that $ B = (D[x,y])[z; \theta] $ where $ \theta $ maps $ e_i\cx[x,y] $ identically to $ e_{i+1} \cx[x,y] $.
    \par
    By Proposition 3.5, $ e(\hnxf)$ equals the right hand side of the equality above. But $ e(\hnxf(n)) = e(\hnxf)$, as $ \hnxf(n)$ is $ \mathbb{G}_m^3$-invariant (when scaling on $x,y,z $) and contains all the fixed points of the $ \mathbb{G}_m^3$-action on $ \hnxf$.
\end{proof}

\begin{lem} (The local case of a 1-dimensional fixed locus) 
    Let $ X = \mathbb{A}^2 = \operatorname{Spec} \cx[x,y]$ and $ f : X \to X$ be the automorphism described by $ f(x) = x, f(y) = \mu y $ for a primitive $ d$-th root of unity $ \mu$. The quotient is $ X/f = \mathbb{A}^2 = \operatorname{Spec} \cx[x,y^d] $ and the fixed locus $ R$ is the $ x$-axis, which may be identified in $ X$ and $ X/f$ under the quotient. Then, there is a canonical isomorphism $ \mathfrak{c}^{-1}_n(R) \cong \mathfrak{c}_n^{-1}(n[O]) \times R $. Consequently, $$ e(\mathfrak{c}_n^{-1} (n[O])) = p_3(n) $$
\end{lem}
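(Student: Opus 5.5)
The plan follows the proof of Lemma 3.6. We first identify the algebra, then use a translation action along $R$ for the product decomposition, and finally compute the Euler characteristic by torus localization together with Proposition 3.5.

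\emph{The algebra.} Here $R = \cx[x,y]$ and $\theta$ fixes $x$ and sends $y \mapsto \mu y$ (up to the inverse convention, which only replaces $\mu$ by $\mu^{-1}$). Hence $\bo$ is the sheaf associated to
\[ B = \cx[x,y][z;\theta^{-1}] = \cx\langle x,y,z\rangle/(xy-yx,\ xz-zx,\ zy-\mu^{-1}yz). \]
This is an iterated twisted polynomial ring of the form in Section 3.1 with $c=1$, $D=\cx$, $\alpha=1$, $\beta=\mu$ and $\gamma=1$. It has unique normal forms $x^{m_1}y^{m_2}z^{m_3}$. Proposition 3.5 therefore gives
\[ \sum_{n} e(H^n_B)q^n = M(q), \qquad e(H^n_B)=p_3(n). \]

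\emph{The product decomposition.} The additive group $R \cong \mathbb{A}^1$ acts on $X$ by $x \mapsto x+a$. This commutes with $f$, so it preserves the algebra structure of $B$: the automorphism $x\mapsto x+a$, $y\mapsto y$, $z\mapsto z$ respects the relations. It therefore acts on $\hnxf$ compatibly with the quotient map to $X/f$ and with the Hilbert–Chow morphism. Every point of $R$ is of the form $n[P]$ with $P\in R$ under the diagonal map. Translating a quotient supported at $n[P]$ to one supported at $n[O]$ gives the map
\[ \mathfrak{c}_n^{-1}(R) \to \mathfrak{c}_n^{-1}(n[O]) \times R. \]
Its inverse is the action map, so it is an isomorphism, exactly as in Lemma 3.6. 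For the Euler characteristic, multiplicativity then gives $e(\mathfrak{c}_n^{-1}(R)) = e(\mathfrak{c}_n^{-1}(n[O]))\cdot e(R) = e(\mathfrak{c}_n^{-1}(n[O]))$. This is not yet the answer, so we localize on $\mathfrak{c}_n^{-1}(n[O])$ directly.

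\emph{Euler characteristic by localization.} The torus $\mathbb{G}_m^3$ scales $x$, $y$ and $z$ independently. All relations are homogeneous, so this preserves $B$ and acts on $H^n_B = \hnxf$. The fibre $\mathfrak{c}_n^{-1}(n[O])$ is $\mathbb{G}_m^3$-invariant, because the origin of $X/f=\operatorname{Spec}\cx[x,y^d]$ is fixed. The argument of Proposition 3.5 shows that the fixed points of $\hnxf$ are the monomial left ideals of codimension $n$. For such an ideal the quotient is killed by a power of $x$ and of $y$, so it is supported over $O$. Hence all fixed points lie in $\mathfrak{c}_n^{-1}(n[O])$. Applying Proposition 3.3 three times, once for each $\mathbb{G}_m$ factor, on both spaces gives
\[ e(\mathfrak{c}_n^{-1}(n[O])) = \#\{\text{monomial ideals}\} = e(H^n_B) = p_3(n). \]

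The main obstacle is this last step: Proposition 3.3 is stated for finite-type schemes, and one must check that the monomial fixed locus is finite and lies inside the fibre. Both follow because every monomial left ideal has finite codimension with quotient supported at the origin.
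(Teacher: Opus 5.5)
Your proposal is correct and follows the paper's proof: you identify the same twisted polynomial algebra (your $\mu^{-1}$ is only a convention), use translations along $R$ for the product decomposition, and combine Proposition 3.5 with $\mathbb{G}_m^3$-localization. The one small difference is where you localize. You work directly on the invariant fibre $\mathfrak{c}_n^{-1}(n[O])$, which contains every monomial fixed point. The paper, following Lemma 3.6, localizes on $\mathfrak{c}_n^{-1}(R)$ and then uses the product with $e(R)=1$. Your version makes the product decomposition unnecessary for the Euler characteristic claim.
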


\begin{proof}
    The proof of the first part is identical to that of Lemma 3.6 by applying translations only along the fixed locus. For the second part, $ \bo $ is the sheaf of algebras associated to the noncommutative polynomial ring $$ B = \mathbb{C} \langle x,y,z \rangle / (xy-yx, zy - \mu y z, zx-xz )$$ By Proposition 3.5, $ e(\hnxf) = p_3(n) $. The statement that $ e(\mathfrak{c}_n^{-1} (n[O])) $ equals $ p_3(n)$ follows the same way as in Lemma 3.6.
\end{proof}

Following the ideas in \cite{beentjes2021virtual} and \cite{BehrendFantechi2008}, we start with a more general setup. Let $ S$ be a quasi-projective variety with an automorphism $ g : S \to S $ of finite order and let $ S' $ be the quotient $ S/g$. Let $ \rho : T' \to S' $ be a morphism and $ T = T' \times_{S'} S $ be the base change. Let $ g_T : T \to T$ be the induced automorphism on $ T$. In addition, we will assume that there is an equivariant compactification $ (\overline{S}, \overline{g}) $ (see Lemma 2.14) of $ (S,g)$.
\par
If $ \rho $ is flat, then formation of the quotient commutes with base change, that is, $ T' = T/g_T$. Furthermore, the noncommutative algebras are also compatible: By flat base change, $ \rho^* \mathcal{B}_0(g) \cong \mathcal{B}_0(g_T)$ as $ \mathcal{O}_{T'}$-modules, and the noncommutative algebra structures on these two sheaves match because of the construction of $ T$ as $ T' \times_{S'} S$ (see Remark 2.8).

The following proposition is an analogue of Proposition A.2 in \cite{beentjes2021virtual} and Lemma 4.7 in \cite{BehrendFantechi2008}.
\newcommand{\bos}{\mathcal{B}_0(\sigma)}
\begin{prop}
    Let $ P = [\mathcal{B}_0(g_T) \to \mathcal{Q} \to 0]  \in \mathscr{H}^n_{T, g_T}$ be a point in the Hilbert scheme such that $ \rho $ is injective on $ |\operatorname{Supp} \mathcal{Q}|$ and assume $ \rho $ is etale in a neighborhood of $ |\operatorname{Supp} \mathcal{Q}| $. Then, there is an open subset $ P \in U \subset \mathscr{H}^n_{T, g_T} $ and a morphism $ U \to \mathscr{H}^n_{S,g}$ sending a quotient $ P' = [\mathcal{B}_0(g_T) \to \mathcal{Q}' \to 0] \in U $ to the composition \begin{equation}
        [\mathcal{B}_0(g) \to \rho_* \rho^* \mathcal{B}_0(g)_ = \rho_* \mathcal{B}_0(g_T) \to \rho_* \mathcal{Q}'] 
    \end{equation}
\end{prop}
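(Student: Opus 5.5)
The plan follows the proof of Proposition A.2 in \cite{beentjes2021virtual} and Lemma 4.7 in \cite{BehrendFantechi2008}. First I will choose $U$. Let $W \subset T'$ be an open neighborhood of $|\operatorname{Supp} \mathcal{Q}|$ on which $\rho$ is étale. Let $Z \subset T'$ be the closed set of points where $\rho$ fails to be étale. The Hilbert--Chow morphism $\mathfrak{c}_n : \mathscr{H}^n_{T,g_T} \to \operatorname{Sym}^n T'$ is continuous. Hence the locus of quotients whose support avoids $Z$ is open. Inside it I also need the locus where $\rho$ restricted to the support is injective, i.e.\ where $\operatorname{Supp}\mathcal{Q}'$ meets no fiber of $\rho$ in two points. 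To get this openly, I will use the set $V \subset \operatorname{Sym}^n W$ of cycles $\sum [t_j]$ such that $\rho(t_j) = \rho(t_k)$ implies $t_j = t_k$. Since $\rho|_W$ is étale and hence quasi-finite and unramified, the diagonal $W \to W \times_{S'} W$ is an open and closed immersion. The complement of $V$ is therefore the image of a closed condition under the finite map $W^n \to \operatorname{Sym}^n W$, so $V$ is open. I then set $U = \mathfrak{c}_n^{-1}(V)$, which contains $P$.

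Next I define the map on $R$-points for an arbitrary base $R$, so that it is a morphism of functors. Take a family $\mathcal{B}_0(g_{T})^R \to \mathscr{Q}' \to 0$ on $T' \times R$ with $\mathfrak{c}_n$ landing in $V$. By the base-change compatibility noted after Corollary 2.10 and before this proposition ($\rho^*\mathcal{B}_0(g) \cong \mathcal{B}_0(g_T)$ as algebras, since étale maps are flat), the pushforward $\rho_{R*}\mathscr{Q}'$ is a $\rho_{R*}\rho_R^*\mathcal{B}_0(g)^R$-module, and hence a $\mathcal{B}_0(g)^R$-module via the unit of adjunction. The support of $\mathscr{Q}'$ is finite over $R$, so $\rho_R$ restricted to it is finite. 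Therefore $\rho_{R*}\mathscr{Q}'$ is coherent and $R$-flat, with support finite over $R$, and its pushforward to $R$ equals that of $\mathscr{Q}'$, which is locally free of rank $n$. The composite in (1) is a map of $\mathcal{B}_0(g)^R$-modules. Everything is compatible with base change in $R$, so this defines the morphism $U \to \mathscr{H}^n_{S,g}$ by representability (Lemma 2.14).

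The main obstacle is surjectivity of the composite $\mathcal{B}_0(g)^R \to \rho_{R*}\mathscr{Q}'$. By Nakayama's lemma and the finiteness of the support, it suffices to check this on fibers over points $r \in R$, so I reduce to $R$ a point. Since $\rho$ is étale and injective on $\operatorname{Supp}\mathcal{Q}'$, for each support point $t$ the map $\rho$ induces isomorphisms of completed local rings $\widehat{\mathcal{O}}_{S',\rho(t)} \cong \widehat{\mathcal{O}}_{T',t}$. Injectivity also gives $\rho_*\mathcal{Q}' = \bigoplus_t \mathcal{Q}'_t$ with distinct images. So $\rho_*\mathcal{Q}'$ is a quotient of $\bigoplus_t \mathcal{B}_0(g_T)_t$, which is supported on finitely many distinct points of $S'$. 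The key point is that the image of $\mathcal{B}_0(g)_{\rho(t)}$ in $\mathcal{B}_0(g_T)_t / \mathfrak{m}_t^N \mathcal{B}_0(g_T)_t$ is everything for $N$ large: this holds because $\mathcal{B}_0(g_T)_t \cong \mathcal{B}_0(g)_{\rho(t)} \otimes \mathcal{O}_{T',t}$ and $\mathcal{O}_{S',\rho(t)}/\mathfrak{m}^N \to \mathcal{O}_{T',t}/\mathfrak{m}^N$ is an isomorphism. Each $\mathcal{Q}'_t$ is killed by such a power $\mathfrak{m}_t^N$. By the Chinese remainder theorem over $S'$, the global map then surjects onto the direct sum. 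This proves the surjectivity and completes the construction.
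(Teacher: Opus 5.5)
Your proof is correct, but it takes a different route from the paper's.

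The paper checks surjectivity of (1) only at the point $P$. It shrinks $T'$ to an affine $V$ on which $\rho$ is étale and such that no point of $V$ outside $|\operatorname{Supp}\mathcal{Q}|$ maps into $\rho(|\operatorname{Supp}\mathcal{Q}|)$. Then $V\to\rho(V)$ is faithfully flat and $\rho^*\rho_*\mathcal{Q}\cong\mathcal{Q}$, so surjectivity descends. It then defines $U$ intrinsically, as the complement of the image in $\mathscr{H}^n_{T,g_T}$ of the support of the cokernel of the universal map (2). It uses properness of $S'$ to see that this image is closed, which is why it first treats projective $S$ and then passes to the equivariant compactification $(\overline{S},\overline{g})$.

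You instead fix $U=\mathfrak{c}_n^{-1}(V)$ in advance by an explicit geometric condition. You prove it is open using the unramified (hence open) diagonal and the finiteness of $W^n\to\operatorname{Sym}^nW$. You then verify surjectivity at every point of $U$ through the Artinian comparison $\mathcal{O}_{S',\rho(t)}/\mathfrak{m}^N\cong\mathcal{O}_{T',t}/\mathfrak{m}_t^N$, followed by Nakayama.

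What each route buys:
\begin{itemize}
\item Your route never needs $S'$ to be proper, so the detour through the compactification disappears.
\item The hypotheses of the proposition (étaleness near the support and injectivity of $\rho$ on it) hold at every point of your $U$.
\item The paper's route gives a one-line surjectivity check at $P$ via faithful flatness.
\item The paper's $U$ is the full locus where the universal composite is surjective.
\end{itemize}

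One point to tighten: the truncated local ring isomorphism needs trivial residue field extension. So do the fibrewise check at closed points of $U$ for the universal family, or at geometric points, rather than at arbitrary $r\in R$. For the universal family this suffices: the cokernel has support finite over $U$, so its image is closed and, if nonempty, contains a closed point.
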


\begin{proof}
    Begin with the case of $ S $ projective. We first show that (1) (which is a map of $ \mathcal{B}_0(g)$-modules) is surjective for the point $ P$. We can shrink $ T' $ to an open affine neighborhood $ V $ of $ |\operatorname{supp} \mathcal{Q}| $ where $ \rho $ is etale, and such that none of the points in $ V - |\operatorname{Supp} \mathcal{Q}| $ map to $ \rho(|\operatorname{Supp} \mathcal{Q}|)$. We can also replace the target $ S'$ by $ \rho(V)$, which is open as $ \rho$ is now etale. After these replacements, $ \rho$ is etale and surjective, so is faithfully flat. By faithful flatness for quasi-coherent sheaves, $ \mathcal{B}_0(g) \to \rho_* \mathcal{Q} $ is surjective iff $ \rho^* \mathcal{B}_0(g) \to \rho^* \rho_* \mathcal{Q} $ is. But by the choice of $ V$ as a neighborhood containing no other preimages of $ \rho(|\operatorname{Supp} \mathcal{Q}|) $, the canonical map $ \rho^* \rho_* \mathcal{Q} \to \mathcal{Q} $ is an isomorphism. Thus $ \rho^*\mathcal{B}_0(g) \to \rho^* \rho_* \mathcal{Q} $ is simply identified with the surjection $ \mathcal{B}_0(g_T) \to \mathcal{Q} $, proving the claim.
    \par To complete the proof that there is an open subset $ U $ around $ P$ and a morphism $  U \to \mathscr{H}^n_{S,g}$, it suffices to construct an open neighborhood of the point $ P\in \mathscr{H}^n_{T, g_T} $ such that for any quotient $ P' = [\mathcal{B}_0(g_T) \to \mathcal{Q}' \to 0]$ in this neighborhood, $ \mathcal{B}_0(g) \to \rho_* \mathcal{Q}' $ remains surjective. As mentioned above, reduce to $ T' = V $, and therefore, we can assume that $ \rho $ is affine. Consider the commutative diagram 
\[\begin{tikzcd}
	{T' \times \{P\}} & {T' \times \mathscr{H}^n_{T, g_T}} & T' \\
	{S' \times \{P\}} & {S' \times \mathscr{H}^n_{T, g_T}} & S'
	\arrow["i", from=1-1, to=1-2]
	\arrow["\rho"', from=1-1, to=2-1]
	\arrow["{p_{T'}}", from=1-2, to=1-3]
	\arrow["{\tilde{\rho}}"', from=1-2, to=2-2]
	\arrow["\rho", from=1-3, to=2-3]
	\arrow["j"', from=2-1, to=2-2]
	\arrow["{p_{S'}}"', from=2-2, to=2-3]
\end{tikzcd}\] In the right square, the base change map $ p_{S'}^* \rho_* \to \tilde{\rho}_* p_{T'}^*$ is an isomorphism because $ \rho $ is affine. On $ T' \times \mathscr{H}^n_{T, g_T}$, consider the universal family of quotients $$ p_{T'}^* \mathcal{B}_0(g_T) \to \mathscr{Q} \to 0 $$
From the canonical map $ \mathcal{B}_0(g) \to \rho_* \rho^* \mathcal{B}_0(g) = \rho_* \mathcal{B}_0(g_T) $ on $ S' $, we get  \begin{equation}
    p_{S'}^* \mathcal{B}_0(g) \to p_{S'}^* \rho_* \mathcal{B}_0(g_T) \cong \tilde{\rho}_* p_{T'}^* \mathcal{B}_0(g_T) \to \tilde{\rho}_* \mathscr{Q}
\end{equation}
     on $ S' \times \mathscr{H}^n_{T, g_T} $, which is precisely the universal version of the pointwise construction above. We know that (2) is a surjection when restricted to $ S' \times \{P\}$. Let $ \mathscr{K}$ be the cokernel of (2). Then $ \mathscr{K}$ is coherent as $ \tilde{\rho}_* \mathscr{Q} $ is (because $ \mathscr{Q}$ has proper support over $ \mathscr{H}^n_{T, g_T} $, so $ \operatorname{Supp} \mathscr{Q} \to S' \times \mathscr{H}^n_{T, g_T} $ is also proper). So $ |\operatorname{Supp} \mathscr{K}|$ is closed and its image under the projection $ S' \times \mathscr{H}^n_{T, g_T} \to \mathscr{H}^n_{T, g_T}$ is also closed, as $ S $, and hence $S'$, is projective. Let $ U$ be the open complement (containing $ P$ by construction), which we claim is our desired open subset. By construction, (2) is a surjection over $ S' \times U$. It defines a flat family of quotients: $ \tilde{\rho}_* \mathscr{Q} $ is flat over $ \mathscr{H}^n_{T, g_T}$ because $ \mathscr{Q}$ is, and $ \tilde{\rho} $ is affine. Thus, the desired morphism $ U \to \mathscr{H}^n_{S, g} $ is constructed by the restriction of (2) to $ S' \times U$.
     \par
     In the general case, we can replace $ (S,g)$ by $ (\overline{S}, \overline{g})$ and $ \rho $ by $ i\circ \rho$. The assumptions on $ P$ remain preserved, and the resulting morphism $ U \to \mathscr{H}^n_{\overline{S}, \overline{g}} $ factors through the open subscheme $ \mathscr{H}^n_{S,g}$.
\end{proof}

\begin{prop}
    Under the assumptions of Proposition 3.8, the morphism $ U \to \mathscr{H}^n_{S,g}$ so obtained is etale.
\end{prop}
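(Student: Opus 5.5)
The plan is to prove étaleness through the infinitesimal lifting criterion for finite-type morphisms, as in Lemma 4.7 of \cite{BehrendFantechi2008} and Proposition A.2 of \cite{beentjes2021virtual}. Both $U$ and $\mathscr{H}^n_{S,g}$ are locally of finite type over $\mathbb{C}$, so it is enough to show that the morphism $\Psi : U \to \mathscr{H}^n_{S,g}$ of Proposition 3.8 is formally étale. Concretely, let $A' \to A$ be a square-zero extension of local Artinian $\mathbb{C}$-algebras. Suppose we have an $A$-point of $U$, that is, a flat family of quotients $\mathcal{B}_0(g_T)_A \to \mathscr{Q}_A$. Suppose also that its image $\mathcal{B}_0(g)_A \to \tilde\rho_*\mathscr{Q}_A$ extends to an $A'$-family $\mathcal{B}_0(g)_{A'} \to \mathscr{G}_{A'}$. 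We must show that there is a unique $A'$-family of quotients on $T'$ lifting the given one and mapping to $\mathscr{G}_{A'}$. Since the underlying topological spaces of $\operatorname{Spec} A$ and $\operatorname{Spec} A'$ agree, all supports stay inside the small set where the hypotheses of Proposition 3.8 hold.

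The key step is to show that $\rho$ induces an equivalence between two categories. The first consists of $A'$-flat families of $\mathcal{B}_0(g_T)$-modules, finite over $\operatorname{Spec} A'$, supported over a neighborhood $V$ of $|\operatorname{Supp}\mathcal{Q}|$. The second consists of the analogous families of $\mathcal{B}_0(g)$-modules supported on $\rho(V)$, and the functors are $\rho_*$ and $\rho^*$. As in the proof of Proposition 3.8, we shrink $T'$ to $V$ so that $\rho$ is étale and affine and no point of $V$ outside the support maps to $\rho(|\operatorname{Supp}\mathcal{Q}|)$. Then $\rho$ restricted to $\operatorname{Supp}$ is a closed immersion on the relevant infinitesimal neighborhoods. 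This rests on the fact that an étale map which is injective on the reduced support, with trivial residue field extension, restricts to an isomorphism of complete local rings, i.e.\ $\widehat{\mathcal{O}}_{S',\rho(x)} \cong \widehat{\mathcal{O}}_{T',x}$. A module of finite length with finite support over an Artinian base only depends on these completions. Hence $\rho^*\rho_*\mathscr{M} \to \mathscr{M}$ and $\mathscr{N} \to \rho_*\rho^*\mathscr{N}$ are isomorphisms for such modules. The algebra structures are compatible because $\rho^*\mathcal{B}_0(g) \cong \mathcal{B}_0(g_T)$, by flat base change and Remark 2.8. So the $\mathcal{B}_0$-module structures correspond under the equivalence, and quotient maps from $\mathcal{B}_0(g)$ correspond to quotient maps from $\mathcal{B}_0(g_T)$ by adjunction.

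Given this equivalence, the lift is $\rho^*\mathcal{B}_0(g)_{A'} \to \rho^*\mathscr{G}_{A'}$ restricted to $V$. It is surjective because $\rho^*$ is exact. It is $A'$-flat because $\rho$ is flat. It restricts over $A$ to the original family because $\rho^*\rho_*\mathscr{Q}_A \cong \mathscr{Q}_A$. Uniqueness follows since any lift $\mathscr{Q}_{A'}$ with $\rho_*\mathscr{Q}_{A'} \cong \mathscr{G}_{A'}$ compatibly must equal $\rho^*\rho_*\mathscr{Q}_{A'} \cong \rho^*\mathscr{G}_{A'}$. There is one more check: the lifted point must lie in $U$ and not merely in $\mathscr{H}^n_{T,g_T}$. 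This is automatic, since $U$ is open and the lifted point has the same underlying point as the original.

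The main obstacle I expect is making the equivalence precise over Artinian bases. One has to check that supports over $A'$ remain inside the locus where $\rho$ is injective. One also has to check that the counit $\rho^*\rho_*\mathscr{M} \to \mathscr{M}$ is an isomorphism in families, not just on the closed fiber. I would handle this by working with the scheme-theoretic support $Z \subset V \times \operatorname{Spec} A'$, which is finite over $\operatorname{Spec} A'$ with the same reduced points as $\operatorname{Supp}\mathcal{Q}$. Since $\rho \times \mathrm{id}$ is étale and separated, its restriction $Z \to S' \times \operatorname{Spec} A'$ is unramified, universally injective and finite. It is therefore a closed immersion. The étale base change $Z \times_{S'} T' \to Z$ then has a section component equal to $Z$, by the choice of $V$. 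From this, the counit and unit isomorphisms follow.
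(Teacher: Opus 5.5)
Your proposal is correct and follows the paper's proof. Both use the infinitesimal lifting criterion, shrink $T'$ to $V$, and take the lift to be $\rho^*$ of the $A'$-family, with existence and uniqueness coming from the unit/counit isomorphisms $\rho^*\rho_* \cong \mathrm{id}$ and $\mathrm{id} \cong \rho_*\rho^*$ on modules supported near $|\operatorname{Supp}\mathcal{Q}|$. Your closed-immersion argument for the scheme-theoretic support over $\operatorname{Spec} A'$ supplies the detail that the paper compresses into ``this isomorphism persists for any fat point'' and ``tracing through the diagram''.
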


\begin{proof}
     This is an application of the infinitesimal lifting criterion for etale morphisms. Consider a square of the form
\[\begin{tikzcd}
	{\operatorname{Spec}A} & U \\
	{\operatorname{Spec} A'} & \mathscr{H}^n_{S,g}
	\arrow[from=1-1, to=1-2]
	\arrow[from=1-1, to=2-1]
	\arrow[from=1-2, to=2-2]
	\arrow[dashed, from=2-1, to=1-2]
	\arrow[from=2-1, to=2-2]
\end{tikzcd}\] where $ \operatorname{Spec} A$ is an Artinian local $ \cx$-algebra whose closed point maps to $ P \in U$, and $ \operatorname{Spec} A \subset \operatorname{Spec} A'$ is a square-zero extension. We need to show that the dashed morphism $ \operatorname{Spec} A' \to U$ exists and is unique. 
\par
As in Proposition 3.8, we can assume $ T' =V $ so that $ \rho^* \rho_* \mathcal{Q} \to \mathcal{Q}$ is an isomorphism. This implies that $ \rho_{\operatorname{Spec}A}^* (\rho_{\operatorname{Spec}A})_* \mathcal{Q}_A \to \mathcal{Q}_A$ remains an isomorphism for any flat family of quotients $ \mathcal{Q}_A$ extending $ \mathcal{Q} $, on any fat point $ \operatorname{Spec} A$. 
\par 
Let $ \mathcal{T}$ be the family of quotients on $ S' \times \operatorname{Spec} A' $ defined by $ \operatorname{Spec} A' \to \mathscr{H}^n_{S,g}$. Then, tracing through the diagram and using the above observation easily implies that the family $ \rho_{\operatorname{Spec} A'}^* \mathcal{T}$ on $ T' \times \operatorname{Spec} A' $ provides the unique dashed arrow $ \operatorname{Spec} A' \to U$.
\end{proof}

\begin{cor}
    Let $ S,T,S', T', g, g_T$ and $ \rho$ be as before. Consider $ \mathscr{H}^n_{T, g_T}(n) $ and $ \mathscr{H}^n_{S,g}(n) $ with the reduced scheme structure. If $ \rho$ is etale, there is a Cartesian square
    \[\begin{tikzcd}
	\mathscr{H}^n_{T, g_T}(n) & \mathscr{H}^n_{S,g}(n) \\
	T' & S'
	\arrow[from=1-1, to=1-2]
	\arrow["{\mathfrak{c}_n}"', from=1-1, to=2-1]
	\arrow["{\mathfrak{c}_n}", from=1-2, to=2-2]
	\arrow["\rho"', from=2-1, to=2-2]
    \end{tikzcd}\]
    where the vertical maps are the respective Hilbert-Chow morphisms.
\end{cor}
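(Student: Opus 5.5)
The plan is to build the top horizontal arrow from the local morphisms of Proposition 3.8, check that it is compatible with the Hilbert--Chow maps, and then prove the square is Cartesian by showing the induced map to the fibre product is a bijective étale morphism of reduced schemes.

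\emph{Constructing the map.} Let $P=[\mathcal{B}_0(g_T)\to\mathcal{Q}\to 0]$ be a point of $\mathscr{H}^n_{T,g_T}(n)$. Then $\mathcal{Q}$ is supported at a single point $t'\in T'$, so $\rho$ is automatically injective on $|\operatorname{Supp}\mathcal{Q}|$. Since $\rho$ is étale everywhere, the hypotheses of Proposition 3.8 hold. We therefore get an open $U_P\ni P$ and a morphism $U_P\to\mathscr{H}^n_{S,g}$, given by $\mathcal{Q}'\mapsto\rho_*\mathcal{Q}'$, and it is étale by Proposition 3.9. These local morphisms are all given by the same formula, so they agree on overlaps. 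They glue to a morphism from an open neighbourhood $U$ of $\mathscr{H}^n_{T,g_T}(n)$ to $\mathscr{H}^n_{S,g}$. If $\mathcal{Q}'$ is supported at one point $t'$, then $\rho_*\mathcal{Q}'$ is supported at $\rho(t')$ and has the same length $n$. Hence $U\cap\mathscr{H}^n_{T,g_T}(n)$ maps into $\mathscr{H}^n_{S,g}(n)$, and the reduced structures give the top arrow. The equality $\mathfrak{c}_n(\rho_*\mathcal{Q}')=n[\rho(t')]=\rho(\mathfrak{c}_n(\mathcal{Q}'))$ shows the square commutes.

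\emph{Comparing with the fibre product.} Let $W=T'\times_{S'}\mathscr{H}^n_{S,g}(n)$, with its reduced structure; this is harmless because $\rho$ is étale, so $W$ is already reduced. We get a morphism $\Psi:\mathscr{H}^n_{T,g_T}(n)\to W$. Its second component is étale by Proposition 3.9, and the projection $W\to\mathscr{H}^n_{S,g}(n)$ is étale as a base change of $\rho$. Hence $\Psi$ is étale.

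I will show that $\Psi$ is bijective on $\mathbb{C}$-points. Take a point $(t',[\mathcal{B}_0(g)\to\mathcal{G}])$ of $W$, where $\mathcal{G}$ is supported at $s'=\rho(t')$. Since $\rho$ is étale at $t'$, it induces an isomorphism of completed local rings $\widehat{\mathcal{O}}_{S',s'}\cong\widehat{\mathcal{O}}_{T',t'}$. Taking the base change along $T=T'\times_{S'}S$, and using Remark 2.8, this also identifies the completions of $\mathcal{B}_0(g)$ and $\mathcal{B}_0(g_T)$. Because $\mathcal{G}$ is finite length, it is a module over the completion. So $\mathcal{G}$ has a unique lift to a quotient $\mathcal{Q}$ supported at $t'$; concretely, $\mathcal{Q}$ is the part of $\rho^*\mathcal{G}$ supported at $t'$. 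This $\mathcal{Q}$ is a preimage, and any preimage must be of this form, since $\rho^*\rho_*\mathcal{Q}\to\mathcal{Q}$ is an isomorphism near $t'$, as in the proof of Proposition 3.8.

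\emph{Conclusion and main obstacle.} A bijective étale morphism is an open immersion and injective on points, so with bijectivity it is an isomorphism onto $W$. Both $\mathscr{H}^n_{T,g_T}(n)$ and $W$ are reduced, so this proves the square is Cartesian. The delicate step is the gluing and reduced-structure bookkeeping. One must make sure that the open sets $U_P$ from Proposition 3.8 cover the whole locus $\mathscr{H}^n_{T,g_T}(n)$, and that the étaleness of Proposition 3.9, which is proved on the non-reduced Hilbert schemes, restricts correctly to these reduced closed loci. I would handle this by first proving the Cartesian property for the preimages of the diagonal loci $T'\to\operatorname{Sym}^nT'$ at the level of underlying reduced schemes only, which is all the corollary asserts.
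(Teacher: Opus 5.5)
Your proposal is correct and follows the paper's route. Proposition 3.8 supplies the top arrow and commutativity, and Proposition 3.9 supplies \'etaleness. Your bijectivity-on-points argument via completed local rings, combined with the fact that a bijective \'etale morphism of finite-type $\mathbb{C}$-schemes is an isomorphism, just spells out the step the paper calls immediate.

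The restriction issue you flag is harmless. $\mathscr{H}^n_{T,g_T}(n)$ is open in the preimage of $\mathscr{H}^n_{S,g}(n)$ under the \'etale map of Proposition 3.9. This holds because the diagonal of the unramified map $\rho$ is open, so being supported at one point of $T'$ is an open condition among quotients supported in a single fibre of $\rho$. Hence the restricted map between the reduced strata is still \'etale.
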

\begin{proof}
    By Proposition 3.8 the top horizontal arrow exists and the square is commutative, while by Proposition 3.9, it is immediate that the square is also Cartesian.
\end{proof}

To reduce to the case of the local models, we will need versions of Luna's etale slice theorem for finite groups. The more general statements for reductive groups can be found in Luna's original paper \cite{Luna1973Slices}, or the comprehensive notes of Drézet \cite{drezet2004luna}.

\begin{prop}[Luna's etale slice theorem for finite groups] Let $ G$ be a finite group acting on an affine variety $ Z$. Let $ z$ be a point of $ Z$ for which the orbit $ Gz$ is closed. Then there is a locally closed affine subvariety $ V \subset Z$ such that $ z \in V$, and
\begin{enumerate}
    \item $ V$ is $ G_z$-invariant, where $ G_z \subset G$ is the stabilizer of $ z$.
    \item The induced $ G $-equivariant morphism $ G \times_{G_z} V \to Z$ is strongly etale.
\end{enumerate}
    
\end{prop}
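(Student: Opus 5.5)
Since $G$ is finite, every orbit is closed, so the hypothesis on $Gz$ is automatic. I would prove the statement by passing through the intermediate quotient $Z/G_z$ rather than through tangent-space linearization; this keeps the argument valid when $Z$ is singular. Write $H = G_z$ and let $\pi_H : Z \to Z/H$, $\pi_G : Z \to Z/G$ be the (finite) quotient maps, with induced finite map $\psi : Z/H \to Z/G$.

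\emph{Step 1: $\psi$ is étale at $[z]_H$.} I would compare complete local rings, which is legitimate in characteristic $0$ because taking invariants of a finite group is exact and commutes with completion. The fiber $\pi_G^{-1}([z]_G)$ is the finite orbit $Gz = \{g_1z,\dots,g_rz\}$ with $g_i$ coset representatives of $G/H$. This gives
\[ \widehat{\mathcal{O}}_{Z/G,[z]} \cong \Bigl(\prod_{i} \widehat{\mathcal{O}}_{Z,g_iz}\Bigr)^{G} \cong \widehat{\mathcal{O}}_{Z,z}^{H} \cong \widehat{\mathcal{O}}_{Z/H,[z]_H}. \]
The middle isomorphism holds because $G$ permutes the factors transitively and $H$ is the stabilizer of the factor at $z$. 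So $\psi$ induces an isomorphism on completions at $[z]_H$, and a finite morphism of varieties with this property is étale there.

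\emph{Step 2: choice of $V$.} I would pick an affine open neighborhood $V'$ of $[z]_H$ in $Z/H$ on which $\psi$ is étale, and set $V = \pi_H^{-1}(V')$. Then $V$ is affine, $H$-invariant, locally closed in $Z$, and contains $z$, which gives (1). I would further shrink $V'$, by removing the closed set $\psi^{-1}\psi(\{[g_iz]_H : g_i \notin H\})$ away from $[z]_H$, so that the only point of $V'$ over $[z]_G$ is $[z]_H$.

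\emph{Step 3: strong étaleness.} This means: the map $(G\times_H V)/G = V' \to Z/G$ is étale, and the square with $G\times_H V \to Z$ on top is Cartesian. The first part holds by construction. For the second, I would consider the natural $G$-equivariant morphism
\[ \Theta : G\times_H V \to V'\times_{Z/G} Z. \]
Both sides are finite over $V'$, so it suffices to show that $\Theta$ is an isomorphism after possibly shrinking $V'$ around $[z]_H$. Indeed, the locus where a finite morphism of finite $V'$-schemes fails to be an isomorphism has closed image in $V'$, so it is enough to check that $\Theta$ is an isomorphism over the completion at $[z]_H$. There the computation reduces to Step 1. Over $\widehat{\mathcal{O}}_{Z/G,[z]}$, the scheme $Z$ becomes $\prod_i \widehat{\mathcal{O}}_{Z,g_iz}$, which is exactly the completion of $G\times_H V$ along $G\times_H\{z\}$, since $V$ has completion $\widehat{\mathcal{O}}_{Z,z}$ at $z$.

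I expect Step 3 to be the main obstacle. The Cartesian property needs care at points of $V$ other than $z$: some $hz'$ in $V$ could collide with a $G$-translate $gz'$ with $g\notin H$, which would make $\Theta$ non-injective. The shrinking in Step 2 together with the closed-image argument is meant to rule this out. If that argument fails, the fallback is to choose $V$ directly. I would take a function $\varphi$ with $\varphi(z)=1$ and $\varphi(g_iz)=0$ for $g_i\notin H$, replace it by the $H$-invariant product $\prod_{h\in H} h\cdot\varphi$, set $V = D$ of that product, and then check by hand that $gV\cap V=\emptyset$ for $g\notin H$ after further shrinking.
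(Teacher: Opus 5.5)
Your argument is correct. The paper does not prove this proposition; it quotes it from Luna and Drézet. So the relevant comparison is with Luna's general argument. There, for reductive $G$, one embeds $Z$ equivariantly in a representation $W$ and takes an $H$-stable complement $N$ to $T_z(Gz)$. The fundamental lemma then upgrades \'etaleness of $G\times_H N\to W$ at $[e,0]$ to strong \'etaleness over a saturated affine neighbourhood.

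For finite $G$ one has $T_z(Gz)=0$, so the slice is just an $H$-stable affine open piece of $Z$. Your proof is in effect a direct proof of the fundamental lemma in this degenerate case:
\begin{itemize}
\item Step 1 gives \'etaleness on quotients, via $\widehat{\mathcal{O}}_{Z/G,[z]_G}\cong\widehat{\mathcal{O}}_{Z,z}^{H}\cong\widehat{\mathcal{O}}_{Z/H,[z]_H}$.
\item Step 3 gives the Cartesian property by base-changing $\Theta$ to $\operatorname{Spec}\widehat{\mathcal{O}}_{V',[z]_H}$. There the source becomes $\bigsqcup_i\operatorname{Spec}\widehat{\mathcal{O}}_{Z,z}$, the target becomes $\operatorname{Spec}\prod_i\widehat{\mathcal{O}}_{Z,g_iz}$, and $\Theta$ is the product of the isomorphisms induced by the $g_i$. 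You then shrink $V'$ away from the closed support of $\ker\Theta^{\#}\oplus\operatorname{coker}\Theta^{\#}$, using faithful flatness of completion.
\end{itemize}
Your route gives a short, self-contained argument with no equivariant embedding and no appeal to the general fundamental lemma. It also produces $V$ \emph{open} in $Z$, which is exactly how the paper later uses the proposition: an affine open $f^c$-invariant neighbourhood in the general case of Theorem 1.2. The citation, in exchange, covers the reductive case and the linearization of Proposition 3.12, which your method does not address.

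Two small corrections. First, in Step 2 the set $\psi^{-1}\psi(\{[g_iz]_H : g_i\notin H\})$ equals $\psi^{-1}([z]_G)$, which contains $[z]_H$. You mean to delete only the finite set $\{[g_iz]_H : g_i\notin H\}$. Even that is unnecessary:
\begin{itemize}
\item the base change to $\operatorname{Spec}\widehat{\mathcal{O}}_{V',[z]_H}$ only sees $\widehat{\mathcal{O}}_{Z/G,[z]_G}$;
\item the collisions you worry about come from points $z'$ with $G_{z'}\not\subseteq H$. These form the closed set $\bigcup_{g\notin H}Z^g$ of fixed loci, which misses $z$, and the closed-bad-locus shrinking already removes them.
\end{itemize}
So the fallback construction is not needed. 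Second, Step 1 does not use characteristic $0$. Writing $Z=\operatorname{Spec}A$, the identity $(A\otimes_{A^G}\widehat{A^G})^G=\widehat{A^G}$ holds simply because invariants are a kernel and completion is flat.
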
 

\begin{prop}[Linearization to the tangent space] Let $ G, Z$ and $ z$ satisfy the assumptions in Proposition 3.11. In addition, if $ Z$ is smooth at $ z$, then there is a locally closed subvariety $ z \in V \subset Z$ for which properties (1) and (2) above hold, and an etale $ G_z$-equivariant morphism $ \psi : V \to T_zZ  $ such that
\begin{enumerate}
    \item $ \psi(z) = 0 $ and $ d\psi_z : T_zV \to T_0(T_zV) = T_zV  $ is the identity.
    \item $ \psi$ is strongly etale.
\end{enumerate}
    
\end{prop}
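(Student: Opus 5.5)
The plan is to deduce this from Proposition 3.11 together with the standard fact that a finite group acting linearly on a smooth point can be linearized étale-locally. First, apply Proposition 3.11 to get a $G_z$-invariant locally closed affine $V_0 \ni z$ such that $G \times_{G_z} V_0 \to Z$ is strongly étale. Since strongly étale maps are étale, $V_0$ is smooth at $z$ and $T_zV_0 = T_zZ$. After shrinking $V_0$ to a $G_z$-invariant affine open neighborhood of $z$, we may assume $V_0$ is smooth and affine. This reduces the problem to the case $G = G_z$ fixing $z$, with $Z = V_0$ affine.

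Next, construct $\psi$ directly. Let $\mathfrak{m} \subset \mathbb{C}[V_0]$ be the maximal ideal of $z$, which is $G_z$-stable. The projection $\mathfrak{m} \to \mathfrak{m}/\mathfrak{m}^2 = T_z^*Z$ is a surjection of $G_z$-representations. Because $G_z$ is finite and we are in characteristic $0$, Maschke's theorem lets us choose a $G_z$-equivariant splitting $s : T_z^*Z \to \mathfrak{m}$. The splitting $s$ extends to an algebra map $\operatorname{Sym}(T_z^*Z) \to \mathbb{C}[V_0]$, which defines a $G_z$-equivariant morphism $\psi : V_0 \to T_zZ$. By construction $\psi(z) = 0$, and $d\psi_z$ is the identity, since $s$ composed with the projection to $\mathfrak{m}/\mathfrak{m}^2$ is the identity. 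A morphism of smooth varieties with bijective differential at $z$ is étale at $z$, so $\psi$ is étale on an open neighborhood of $z$.

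It remains to shrink to a $G_z$-invariant neighborhood $V$ on which $\psi$ is strongly étale; this is the main obstacle. Strongly étale means that the induced map $V/G_z \to T_zZ/G_z$ is étale and that $V \cong T_zZ \times_{T_zZ/G_z} V/G_z$. The key input is Luna's fundamental lemma: if $\psi : V \to W$ is a $G_z$-equivariant morphism of affine varieties that is étale at a point with closed orbit, is injective on that orbit, and maps the closed orbit to a closed orbit, then there is a $G_z$-invariant affine open $V \ni z$ on which $\psi$ is strongly étale. Here the orbit of $z$ is the single point $\{z\}$, and its image $\{0\}$ is a closed orbit, so the hypotheses hold trivially. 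Finally, take the étale locus of $\psi$, intersect its $G_z$-translates to make it invariant, and pass to a principal invariant affine open; this gives $V$.

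Properties (1) and (2) of Proposition 3.11 are inherited by $V$, since $V$ is a $G_z$-invariant open in $V_0$. Moreover, $G \times_{G_z} V \to G \times_{G_z} V_0$ is an open immersion of the saturated kind produced by the fundamental lemma, so the composite map to $Z$ is still strongly étale. In the proof I would cite Luna and Drézet for the fundamental lemma rather than reprove it.
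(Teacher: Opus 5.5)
Your argument is correct. The paper gives no proof of this proposition; like Proposition 3.11, it is quoted from Luna and Dr\'ezet. What you wrote is the standard derivation in those sources: reduce to $G_z$ via the slice, build a $G_z$-equivariant map $V_0\to T_zZ$ that is etale at $z$ from an equivariant section of $\mathfrak m\to\mathfrak m/\mathfrak m^2$, and upgrade it to strongly etale with the fundamental lemma.

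Three small points to tidy:
\begin{enumerate}
\item Since $\mathfrak m$ is infinite-dimensional, obtain the equivariant section by averaging an arbitrary linear section over $G_z$, which works in characteristic $0$, rather than by quoting Maschke directly.
\item Your shrinkings of $V_0$ preserve property (2) because, for a finite group, every invariant open subset is saturated: the quotient map is finite, hence closed, and its fibres are exactly the orbits. So $G\times_{G_z}V\to G\times_{G_z}V_0$ is a saturated open immersion and hence strongly etale. This is the real justification, rather than ``the saturated kind produced by the fundamental lemma'', and it is equally what licenses your first shrink of $V_0$ to the smooth locus.
\item The normality hypothesis in Luna's fundamental lemma is automatic here, since $V_0$ and $T_zZ$ are smooth. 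Your separate step of passing to the etale locus is redundant, because the lemma already outputs the required saturated affine open.
\end{enumerate}
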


Return to the case of $ X $ and $f$. Let $ W \subset X $ be the open subset where the action of $ f$ is free. We can give a direct description of the local model of $ \bo $ on $ W$ (it is a special case of Proposition 3.12). Let $ \rho $ be the quotient $ W \to W/f $. It is a Galois cover. Therefore, the square
\[\begin{tikzcd}
	W^d = \bigsqcup_{i=1}^{d} W & W \\
	W & {W/f}
	\arrow["p_2", from=1-1, to=1-2]
	\arrow["p_1"', from=1-1, to=2-1]
	\arrow["q|_W", from=1-2, to=2-2]
	\arrow["\rho"', from=2-1, to=2-2]
\end{tikzcd}\]
is Cartesian, where the top left corner is a disjoint union of $ d$ copies of $ W$. Note that the projections $ p_1$ and $ p_2$ are distinct - On the $ i $-th copy of $ W$ in $ W^d$, $ p_1$ is the identity while $ p_2$ is $ f^i $.
\par 
Let $ \sigma $ be the automorphism of $ W^d $ that cyclically permutes the disjoint copies: The $ i$-th copy of $ W$ maps to the $ (i+1)$-th copy. Then $ \sigma$ is the (unique) automorphism lifting $ f$.
\[\begin{tikzcd}
	{W^d} & {W^d} & \\
	W & W & W \\
	& {W/f}
	\arrow["\sigma", from=1-1, to=1-2]
	\arrow[from=1-1, to=2-1]
	\arrow[from=1-1, to=2-2]
	\arrow[from=1-2, to=2-1]
	\arrow[from=1-2, to=2-3]
	\arrow[from=2-1, to=3-2]
	\arrow["f", from=2-2, to=2-3]
	\arrow[from=2-2, to=3-2]
	\arrow[from=2-3, to=3-2]
\end{tikzcd}\] 
So this is a special case of the general setup, and hence we get $ \rho^* \bo \cong \bos$ as sheaves of noncommutative algebras.

\begin{cor}
    $ \mathscr{H}^n_{W,f}(n) \to W/f$ is a locally trivial fibration in the analytic topology.
\end{cor}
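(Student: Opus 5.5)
The strategy is to reduce, étale-locally on $W/f$, to the local model of Lemma 3.6, and to use the translation action there to trivialize the fibration. Apply Corollary 3.10 in the setup just described, with $S = W$, $g = f|_W$, $S' = W/f$ and $T' = W$, $\rho : W \to W/f$ the quotient map. Then $T = W^d$ with $g_T = \sigma$, and $T/\sigma = W$ since $\rho$ is étale, in particular flat. This gives a Cartesian square
\[\begin{tikzcd}
	\mathscr{H}^n_{W^d, \sigma}(n) & \mathscr{H}^n_{W,f}(n) \\
	W & {W/f}
	\arrow[from=1-1, to=1-2]
	\arrow["{\mathfrak{c}_n}"', from=1-1, to=2-1]
	\arrow["{\mathfrak{c}_n}", from=1-2, to=2-2]
	\arrow["\rho"', from=2-1, to=2-2]
\end{tikzcd}\]
with reduced structures. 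Since $\rho$ is a finite étale Galois cover, it is a local biholomorphism in the analytic topology. So for $P \in W/f$ there is an analytic neighbourhood $\Omega \ni P$ over which $\rho$ admits a section, and $\mathfrak{c}_n^{-1}(\Omega) \subset \mathscr{H}^n_{W,f}(n)$ is identified with $\mathfrak{c}_n^{-1}(\tilde\Omega) \subset \mathscr{H}^n_{W^d,\sigma}(n)$, where $\tilde\Omega$ is a lift of $\Omega$. It therefore suffices to show that $\mathscr{H}^n_{W^d,\sigma}(n) \to W$ is analytically locally trivial.

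For that I would do a second étale reduction to the model of Lemma 3.6. Since $W$ is a smooth surface, at any point $w \in W$ choose an étale map $\psi : V \to \mathbb{A}^2$ from a Zariski neighbourhood $V$ of $w$. This is Proposition 3.12 with trivial group, or simply local coordinates. Form $V^d = \bigsqcup V \subset W^d$ with the cyclic permutation $\sigma$, and compare it with $(\mathbb{A}^2)^d$ with its cyclic permutation; $\psi$ induces an equivariant étale map, and the square is Cartesian over $V \to \mathbb{A}^2$ because $V^d = V \times_{\mathbb{A}^2} (\mathbb{A}^2)^d$. Applying Corollary 3.10 twice, once for the open immersion $V \hookrightarrow W$ and once for $\psi$, identifies $\mathfrak{c}_n^{-1}(V) \subset \mathscr{H}^n_{W^d,\sigma}(n)$ with the pullback along $\psi$ of $\mathscr{H}^n_{(\mathbb{A}^2)^d,\sigma}(n) \to \mathbb{A}^2$. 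By Lemma 3.6 the latter is the trivial product $\mathfrak{c}_n^{-1}(n[O]) \times \mathbb{A}^2$, so its pullback is $\mathfrak{c}_n^{-1}(n[O]) \times V$, which is in fact Zariski-locally trivial over $W$. Combined with the first step, this gives analytic local triviality over $W/f$ with fibre $\mathfrak{c}_n^{-1}(n[O])$.

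The main obstacle is checking that Corollary 3.10 legitimately applies in each step. First, its hypotheses require an equivariant compactification and the fibre-product setup $T = T' \times_{S'} S$, which must hold for $W^d$, $V^d$ and $(\mathbb{A}^2)^d$; the compactifications come from Lemma 2.14's construction. Second, Corollary 3.10 gives only a Cartesian square of reduced schemes, so local triviality is obtained at the level of reduced spaces. That is enough for the topological Euler characteristic application, and I would state the result in that sense. I would also verify that the isomorphism $\rho^*\mathcal{B}_0(f) \cong \mathcal{B}_0(\sigma)$ of noncommutative algebras matches the twist in the Lemma 3.6 model, so that the identification with the algebra $B$ there is valid.
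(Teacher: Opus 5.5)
Your proposal is correct and follows the paper's proof essentially step for step. You first apply Corollary 3.10 to the Galois cover $\rho : W \to W/f$ to reduce to $\mathscr{H}^n_{W^d,\sigma}(n) \to W$. You then use \'etale charts $V \to \mathbb{A}^2$ with Corollary 3.10 again to reduce to $(\mathbb{A}^2)^d$ with the cyclic permutation, where Lemma 3.6 gives a product; your extra bookkeeping (local analytic sections of $\rho$, the fibre-product setup, reduced structures) only makes explicit what the paper leaves implicit.
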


\begin{proof}
    By Corollary 3.10, there is a Cartesian square
    \[\begin{tikzcd}
	\mathscr{H}^n_{W^d, \sigma}(n) & \mathscr{H}^n_{W,f}(n) \\
	W & W/f
	\arrow[from=1-1, to=1-2]
	\arrow["{\mathfrak{c}_n}"', from=1-1, to=2-1]
	\arrow["{\mathfrak{c}_n}", from=1-2, to=2-2]
	\arrow["\rho"', from=2-1, to=2-2]
    \end{tikzcd}\] so it suffices to prove local triviality for $ \mathscr{H}^n_{W^d,\sigma}(n) \to W$ because $ \rho$ is etale. Cover $ W$ by Zariski open subsets $ W_i$ with etale maps $ W_i \to \mathbb{A}^2$. By Corollary 3.10 again, it is enough to prove the statement for $ (\mathbb{A}^2)^d = \sqcup_{i=1}^{d} \mathbb{A}^2 $ with a cyclic permutation automorphism. But this is true because of Lemma 3.6.
\end{proof}

\subsubsection{Cayley Quivers}
     As an interlude, we describe the quivers giving local models for sheaves with a twisted endomorphism $ (\f, \phi) $ such that the support of $ \f $ lies in a free orbit. 
     \par
     Let $ G$ be a finite abelian group. Let $ S = (f_1, \ldots, f_r) $ be a tuple of elements of $ G$ (It is allowed for elements to occur with multiplicity). 
    \par
    The Cayley quiver $ Q_{G,S} $ associated to $ (G,S) $ is the quiver with relations defined as follows: The vertex set of $ Q_{G,S}$ is $ G$ and for each generator $ f_i \in S$ and each $ g \in G$, a directed edge $ p^g_i $ is drawn from $ g$ to $ gf_i$. We impose the `obvious relations': For every pair of generators $ f_i,f_j$ and every $ g \in G$, we set $ p^{gf_i}_jp^g_i - p^{gf_j}_ip^g_j = 0$. Said more simply, the two paths $ g \to gf_i \to gf_if_j $ and $ g \to gf_j \to gf_jf_i $ are identical (the fact that $ G$ is abelian is used because the endpoints $ gf_if_j $ and $ gf_jf_i$ are equal). Let $ kQ_{G,S} $ be the path algebra of $ Q_{G,S} $ and $ I_{G,S} $ be the two-sided ideal defined by the set of relations above. The path algebra with relations is the quotient $ A_{G,S} = kQ_{G,S} / I_{G,S} $
    \par
    A rather simple description of the algebra $ A_{G,S}$ exists. Let $ \{ e_g \: | \: g \in G \} $ be the idempotent paths at the vertices of $ Q_{G,S} $. For each $ i $, let $ X_i = \sum_{g \in G} p^{g}_i $. Then $ X_ie_g = p^g_i = e_{gf_i}X_i $ for all $ i$ and $ g \in G$, and hence $ A_{G,S}$ is generated by the idempotents $ e_g$ and the sums $ X_i$. Furthermore, consider the product $ X_jX_i $. By definition, this product is the sum of paths $ g \to gf_i \to gf_if_j $ over all $ g \in G$. Similarly, $ X_iX_j$ is the sum of paths $ g \to gf_j \to gf_jf_i $ over all $ g \in G$. It follows that $ X_jX_i = X_iX_j $ because of the imposed relations. Also, for any $ a \in A_{G,S} $, $ a = \sum_{g \in G} a(g) $ where $ a(g) $ is the sum of all monomials in $ a$ which give paths that end at $ g$. If $ a(g) $ is a monomial, the path described by this monomial is of the form $$ g(f_{i_1}^{-1} \ldots f_{i_{m-1}}^{-1}f_{i_m}^{-1}) \to g(f_{i_1}^{-1} \ldots f_{i_m}^{-1}) \cdots \to g(f^{-1}_{i_1}) \to g $$ for some (possibly repeated) generators $ f_{i_1}, \ldots, f_{i_m} $ in $ S$. So $ a(g) = e_g(\lambda \cdot X_{i_1}\cdots X_{i_m}) $ for some scalar $ \lambda$, and in general, $ a(g) = e_ga_g(X)$ for a unique polynomial $ a_g(X) = a_g(X_{i_1}, \ldots, X_{i_m})$. Consider the subring of $ A_{G,S} $ generated by the idempotents $ e_g$. This is the ring $ D = \prod_{g \in G} ke_g $ which is a $ k$-algebra via the diagonal embedding $ k \hookrightarrow \prod_{g \in G} ke_g $ which maps $ \lambda \in k $ to $ (\lambda e_g)_{g \in G} $ (Recall that $ \sum_{g \in G} e_g = 1$ in the path algebra, so this is a well-defined algebra). Every generator $ f_i \in S $ describes a $ k$-algebra automorphism $ \theta_i : D \to D$, $ \theta_i(e_g) = e_{gf_i} $ (in fact, every element of $ G$ describes an automorphism). The collection of automorphisms $ \theta_i $ is pairwise commuting, since $ G$ is abelian. Therefore, $ A_{G,S} $ is isomorphic to the twisted polynomial ring $ D[X_1, \ldots, X_r; \theta_1, \ldots , \theta_r] $, where the `twisting action' on each $ X_i$ is via $ \theta_i $.
    \par
    Let $  G = \mathbb{Z}/d\mathbb{Z} \subset \operatorname{Aut}(X) $ and $ S = (f, id_X, id_X) $, Then, representations of the Cayley quiver of $ (G,S)$ exactly correspond to twisted endomorphisms $ \phi : \f \to f^* \f$ where $ |\operatorname{Supp} \f|$ is contained in a single orbit in $ W$. For example, the quiver for $ d=3$ is
\[\begin{tikzcd}
	& \bullet \arrow[out=5,in=75,loop, swap, "y_0"] \arrow[out=175,in=105,loop, "x_0"] \\
	\bullet \arrow[out=90,in=160,loop, swap, "y_2"] \arrow[out=270,in=200,loop, "x_2"] && \bullet \arrow[out=90,in=20,loop, "x_1"] \arrow[out=270,in=340,loop, swap, "y_1"]
	\arrow["{z_0}", curve={height=-6pt}, from=1-2, to=2-3]
	\arrow["{z_2}", curve={height=-6pt}, from=2-1, to=1-2]
	\arrow["{z_1}", curve={height=-6pt}, from=2-3, to=2-1]
\end{tikzcd}\]
\vspace{6pt}
\par
Next, let $ R \subset X $ be any $ 1$-dimensional component in the fixed locus of $ f$. Let $ \hnxf(n)|_R$ be the stratum parameterizing stable triples $ (\f, \phi,s)$ with $ |\operatorname{Supp} \f| $ a single point in $ R$. We prove the analogue of Corollary 3.13 for $ R$.

\begin{cor}
    There is a locally trivial fibration $ \mathscr{H}^n_{X,f}(n)|_R \to R$ in the analytic topology.
\end{cor}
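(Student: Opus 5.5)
The plan is to reduce, through a chain of étale maps, to the local model of Lemma 3.7, mirroring the proof of Corollary 3.13. The difference is that the group no longer acts freely, so Luna's slice theorem in the form of Propositions 3.11 and 3.12 replaces the Galois-cover trick. Locally trivial fibrations in the analytic topology can be checked on an open cover of $R$, so we fix a point $x \in R$ and work on a neighborhood of its image in $X/f$.

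First take an $f$-invariant affine open $Z \subset X$ containing $x$; the orbit of $x$ is the single point $\{x\}$ because $x$ is fixed. Apply Proposition 3.12 with $G = \langle f \rangle \cong \mathbb{Z}/d$ and $G_x = G$. This gives an $f$-invariant locally closed affine $V \ni x$ and an étale $f$-equivariant map $\psi : V \to T_xX$ with $\psi(x)=0$. The map $V = G \times_{G_x} V \to Z$ is strongly étale. Strong étaleness means that $V/f \to Z/f$ is étale and that $V \cong V/f \times_{Z/f} Z$, so we are in the general setup preceding Proposition 3.8. The same holds for $\psi$, with $T_xX/f$ as the target.

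Next identify the linear action. Since $R$ is a $1$-dimensional component of the fixed locus through $x$ and $X$ is a smooth surface, $df_x$ is a finite-order (hence diagonalizable) linear map on a $2$-dimensional space. Its eigenvalues are $1$ and a primitive $d'$-th root of unity $\mu$ with $d'\mid d$, and $d' = d$ since the order of $f$ equals the order of $df_x$ on a smooth connected neighborhood. So $(T_xX, df_x)$ is exactly the model of Lemma 3.7, and $\psi$ maps $R \cap V$ étale-locally onto the $x$-axis $R_0$. Here one must check that the fixed locus of $V$ maps into the fixed locus of $T_xX$ and that $\psi^{-1}(R_0)$ is $R\cap V$ near $x$; both follow from equivariance and the fact that $\psi$ is étale.

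Apply Corollary 3.10 twice, to $V/f \to X/f$ (after restricting to $Z$) and to $V/f \to T_xX/f$, which yields Cartesian squares of Hilbert--Chow morphisms over the strata $\mathscr{H}^n(n)$. Restricting to the preimages of $R$ (respectively $R_0$) shows that, over a neighborhood of $x$ in $R$, $\hnxf(n)|_R$ is the pullback along an étale map $R\cap V \to R_0$ of $\mathfrak{c}_n^{-1}(R_0) \to R_0$. By Lemma 3.7 the latter is trivial, $\mathfrak{c}_n^{-1}(n[O]) \times R_0$. Étale maps are local analytic isomorphisms, so $\hnxf(n)|_R \to R$ is analytically locally trivial near $x$, with fiber $\mathfrak{c}_n^{-1}(n[O])$.

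I expect the main obstacle to be the bookkeeping in the middle step. One has to confirm that strong étaleness delivers precisely the Cartesian hypothesis $T = T'\times_{S'} S$ needed for Corollary 3.10, and that restriction to the stratum over $R$ commutes with these base changes. The stratum $\mathscr{H}^n(n)|_R$ is defined via the support lying over $R$ in $X/f$, which is identified with $R$ because $R$ is fixed. This step also needs the equivariant compactification required in the setup, which can be obtained as in Lemma 2.14.
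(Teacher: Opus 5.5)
Your proposal is correct and follows essentially the same route as the paper: linearize at a point of $R$ via Proposition 3.12, use strong \'etaleness to get the Cartesian square required by Corollary 3.10, and pull back the trivial fibration of Lemma 3.7 over the $x$-axis. The differences are minor. The paper takes $V$ open in $X$ directly, so Corollary 3.10 is applied only once. It also simply asserts that the normal eigenvalue $\mu$ is a primitive $d$-th root of unity, which you justify.
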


\begin{proof}
    Let $ p \in X$ be any point in $ R$. Applying Proposition 3.12, we get an affine open $ f$-invariant subset $ p \in V = \operatorname{Spec} A \subset X$, and a strongly etale morphism $ (\operatorname{Spec} A, p) \to (\mathbb{A}^2_{x,y}, O) $ such that $ f_V : V \to V$ commutes with the diagonal automorphism $ g(x)= x, g(y)= \mu y$ for a primitive $ d$-th root of unity $ \mu$. So $ V/f_V \subset X/f$ is smooth and $ R \cap V$ is isomorphically mapped to its image in $ V/f_V $. So $ R$ is identified with its image in $ X/f $ and the map $ \mathscr{H}^n_{X,f}(n)|_R \to R$ is part of the Hilbert-Chow map. It suffices to prove the triviality locally around p.
    \par
    The chart $ \operatorname{Spec} A \to \mathbb{A}^2_{x,y}$ is strongly etale. This means that the square 
    \[\begin{tikzcd}
	V & \mathbb{A}^2_{x,y} \\
	{V/f_V} & {\mathbb{A}^2_{x,y^d}}
	\arrow[from=1-1, to=1-2]
	\arrow[from=1-1, to=2-1]
	\arrow[from=1-2, to=2-2]
	\arrow[from=2-1, to=2-2]
\end{tikzcd}\] is Cartesian, and the induced map between the quotients $ V/f_V \to \mathbb{A}^2_{x, y^d}$ is also etale.
    By Corollary 3.10, there is a Cartesian square
    \[\begin{tikzcd}
	\mathscr{H}^n_{V, f_V}(n) & \mathscr{H}^n_{\mathbb{A}^2, g}(n) \\
	{V/f_V} & {\mathbb{A}^2_{x,y^d}}
	\arrow[from=1-1, to=1-2]
	\arrow["{\mathfrak{c}_n}"', from=1-1, to=2-1]
	\arrow["{\mathfrak{c}_n}", from=1-2, to=2-2]
	\arrow[from=2-1, to=2-2]
\end{tikzcd}\] The fixed locus $ R \cap V $ mapping to the $ x $-axis remains an etale map. By Lemma 3.7, the right vertical map is a trivial fibration over the $ x$-axis, so the left vertical map is a locally trivial fibration over $ R \cap V$, completing the proof.
\end{proof}

\subsection{Proof of Theorem 1.2} 

\subsubsection{Proof in the special case of prime order automorphisms}For simplicity and to illustrate the computation, we first prove the theorem in the case when $ \text{ord} f= d$ is prime. Divide the surface into a disjoint union: $ X = W \sqcup R^1 $, where $ W$ is the (open) locus on which the action of $ f$ is free, and $ R^1$ is the fixed locus of $ f$. Further, $ R^1 $ is a disjoint union $ R^1_0 \sqcup R^1_1 $, where $ R^1_0$ is the union of all isolated fixed points of $ f$, and $ R^1_1$ is the union of the 1-dimensional components in the fixed locus of $ f$. Therefore, $ X/f $ is the union of the strata $ W/f \sqcup R^1_0 \sqcup R^1_1 $ where we have identified $ R^1_0$ and $ R^1_1$ with their images, as closed subsets. 
\par
From now on, let $ \hnxf$ be $ Z^n$, and for any strata $ A$ in $ X/f$, denote by $ Z^n_A$ the stratum of $ Z^n$ parameterizing quotients whose topological support is contained in $ A$. As locally closed subsets, $ Z^n = \bigsqcup_{\substack{a,b,c \ge0 \\ a+b+c = n}} Z^a_{W/f} \times Z^b_{R^0} \times Z^c_{R^1} $ and this is enough to calculate topological Euler characteristics, so we have \[ \sum_{n \ge 0} e(Z^n)q^n = \left(\sum_{a \ge 0} e(Z^a_{W/f})q^a \right) \left( \sum_{b \ge 0} e(Z^b_{R^0})q^b\right) \left(\sum_{c \ge 0} e(Z^c_{R^1}) q^c\right) \]
We can compute each of the three series on the right hand side. Consider the series $\sum_{a \ge 0} e(Z^a_{W/f})q^a $. For a fixed but arbitrary $ a$, let $ \alpha = (\alpha_1, \ldots, \alpha_r) $ be a partition of $ a$ of length $ l(\alpha) = r $. Let $ Z^{a, \alpha}_{W/f}$ be the stratum parameterizing quotients whose support has $ r$ distinct points of multiplicities $ \alpha_1, \ldots, \alpha_r$. The open subscheme $ U \subset \prod_i Z^{\alpha_i, (\alpha_i)}_{W/f}$ consisting of quotients whose supports are pairwise distinct points has an obvious morphism to $ Z^{a, \alpha}_{W/f}$. It is a Galois cover with automorphism group $ G_{\alpha} $ equal to the automorphisms of the partition $ \alpha$ (A direct product of symmetric groups). Note that $ U$ can be described by the Cartesian square
\[\begin{tikzcd}
	U & \prod_i Z^{\alpha_i, (\alpha_i)}_{W/f} \\
	{(W/f)_0^r} & {(W/f)^r}
	\arrow[from=1-1, to=1-2]
	\arrow[from=1-1, to=2-1]
	\arrow[from=1-2, to=2-2]
	\arrow[from=2-1, to=2-2]
\end{tikzcd}\] where the right vertical morphism takes quotients to their support points and $ (W/f)^r_0$ is the open subset consisting of pairwise distinct tuples. Therefore \[ e(Z^{a, \alpha}_{W/f}) = \frac{1}{|G_{\alpha}|} e(U) = \frac{1}{|G_{\alpha}|} e((W/f^r)_0) \prod_{i=1}^r j(\alpha_i) \] where $ j(\alpha_i) = \sum_{\sum_{k=0}^{d-1} n_k = \alpha_i, n_k \ge 0} (\prod_{i=0}^{d-1} p_3(n_k)) $ and the last equality holds due to Lemma 3.6 and Corollary 3.13. Since $ (W/f)^r_0 \to (W/f)^{r-1}_0 $ is a locally trivial fiber bundle, it is easy to see by induction, that $ e((W/f)^r_0) = r! \binom{e(W/f)}{r}$ for all $ r \ge 1$. So,
\begin{align*}
    \sum_{a \ge 0} e(Z^a_{W/f})q^a &= 1 + \sum_{a \ge 1} \sum_{\alpha \vdash a} e(Z^{a, \alpha}_{W/f})q^{|\alpha|} \\
     &= 1 + \sum_{r \ge 1} \sum_{\alpha; \: l(\alpha)=r} \frac{1}{|G_{\alpha}|} e((W/f)^r_0) \prod_{i=1}^r (j(\alpha_i)q^{\alpha_i} ) \\
     &= 1 + \sum_{r \ge 1} \binom{e(W/f)}{r} \sum_{\alpha; \: l(\alpha)=r} \frac{r!}{|G_{\alpha}|} \prod_{i=1}^r (j(\alpha_i)q^{\alpha_i} )
\end{align*}
Let $ J(q) = \sum_{m \ge 1} j(m)q^m $. The inner sum $ \sum_{\alpha; \: l(\alpha)=r} \frac{r!}{|G_{\alpha}|} \prod_{i=1}^r (j(\alpha_i)q^{\alpha_i} ) $ is simply the product $ \prod_{i=1}^r J(q) = J(q)^r$ by multinomial expansion. So we obtain
\begin{align*}
    \sum_{a \ge 0} e(Z^a_{W/f})q^a &= 1 + \sum_{r \ge 1} \binom{e(W/f)}{r} J(q)^r \\
     &= (1 + J(q))^{e(W/f)} \\
     &= (\sum_{m \ge 0} j(m)q^m)^{e(W/f)} \\
     &= (M(q)^{d})^{e(W/f)} \\
     &= M(q)^{e(W)}
\end{align*}
where we used Proposition 3.5 in the second-last equality.
\par
Similarly, working with every connected component of $ R^1_1$ and applying the same steps using Lemma 3.7 and Corollary 3.14 instead, we get $ \sum_{c \ge 0} e(Z^c_{R^1_1}) q^c = M(q)^{e(R^1_1)} $.
\par
Lastly, let $ p \in X$ be an isolated fixed point. The linearization in Corollary 3.14 applies the same way in this case. We find an affine open $ f$-invariant subset $ p \in V = \operatorname{Spec}A$ and a strongly etale map $ \operatorname{Spec} A \to \mathbb{A}^2_{x,y}$ such that $ f_V$ commutes with the automorphism $ g(x) = \mu_1 x, g(y) = \mu_2 y$ for some primitive $ d$-th roots of unity $ \mu_1, \mu_2$. Then $ \mathcal{B}_0(g)$ is the sheaf associated to the noncommutative algebra \[ B = \cx \langle x,y,z \rangle/ (xy-yx, zx - \mu_1 xz, zy - \mu_2 yz ) \] and $ Z^b_{ \{ p \}} $ is isomorphic to the stratum of the Hilbert scheme $ H^b_{B}$ consisting of quotients supported at the origin. As in Lemma 3.7, the Euler characteristic of this stratum is $ p_3(b) $, so $ e(Z^b_{ \{ p \}}) = p_3(b) $ for any isolated fixed point $ p$. So \[ \sum_{b \ge 0} e(Z^b_{R^0})q^b = \left(\sum_{m \ge 0} p_3(m)q^m \right)^{\# \text{ of isolated fixed points}} = M(q)^{e(R^1_0)} \]
Putting it all together, \[ \sum_{n \ge 0} e(Z^n)q^n = M(q)^{e(W) + e(R^1_0) + e(R^1_1) } = M(q)^{e(W) + e(R^1)} = M(q)^{e(X)} \]

\subsubsection{Proof in the general case} In the general case, the only additional complication is keeping track of all possible stabilizers. Suppose $ b $ divides $ d$ and write $ d=bc $. Let $ R^c $ be the locally closed subset whose points have stabilizer group $ \mathbb{Z}/b\mathbb{Z}$ (so that if $ b=d$, $ R^c = R^1$ as defined in the special case of prime order). In view of the special case, it suffices to show that the `contributions coming from $ R^c$' to the series $ \sum_{n \ge 0} e(Z^n)q^n $ is $ M(q)^{e(R^c)} $. For this, the computation above shows that it is sufficient to prove the analogues of Corollary 3.13 and 3.14. We proceed to show this.
\par
Let $ p \in R^c$, so that the subgroup generated by $ f^c$ fixes $ p$. By Proposition 3.11, we find an affine open $ f^c$-invariant neighborhood $ V $ of $ p$ and a strongly etale $ \mathbb{Z}/d \mathbb{Z} $-equivariant morphism $ \bigsqcup_{i=0}^{c-1} V \to X $. It is easy to check that on the left, the generator $ f \in \mathbb{Z}/d \mathbb{Z} $ acts by mapping the $i$-th copy of $ V$ to the $ (i+1)$-th copy by the identity for all $ i = 0, 1, \ldots, c-2$, and the last copy to the $ 0$-th copy by $ f^c$. Therefore, there is a Cartesian square
\[\begin{tikzcd}
	\bigsqcup_{i=0}^{c-1} V  & X \\
	{V/f^c} & {X/f}
	\arrow[from=1-1, to=1-2]
	\arrow[from=1-1, to=2-1]
	\arrow[from=1-2, to=2-2]
	\arrow[from=2-1, to=2-2]
\end{tikzcd}\]
in which the top map sends the $ i $-th copy to $ V \subset X \xrightarrow{f^i} X $. By Proposition 3.12, we can also assume that there is a strongly etale $ \mathbb{Z}/b{Z}$-equivariant morphism $ (V,p) \to (\mathbb{A}^2_{x,y},0)$ where the action on the right scales $ x \to \mu_1 x $ and $ y \to \mu_2 y$ for some $ b$-th roots of unity $ \mu_1, \mu_2$ (since the representation is faithful, the $ \operatorname{lcm} $ of the orders of $ \mu_1$ and $ \mu_2$ is $ b$). Using Proposition 3.8 repeatedly, to show the triviality of $ \mathscr{H}^n_{X,f}(n)|_R \to R/f $, we are reduced to proving it for $ X = \bigsqcup_{i=0}^{c-1} \mathbb{A}^2_{x,y} $ and $f : X \to X $ induced by $ (V,p) \to (\mathbb{A}^2_{x,y},0) $.
\par
For this local case, $ X = \operatorname{Spec}(\prod_{i=0}^{c-1} ke_i)[x,y] = \operatorname{Spec} A$. The automorphism $ \theta: A \to A$ acts on $ x$ as \[ \theta(x) = \theta(e_0x + \ldots + e_{c-1}x) = e_1x + \ldots + e_{c-1}x + e_0 \mu_1 x = (1+ (\mu_1-1)e_0)x \] and similarly, $ \theta(y) = (1+ (\mu_2-1)e_0)y $. The coefficients $ \alpha_x = 1+(\mu_1-1)e_0 $ and $ \alpha_y = 1+(\mu_2-1)e_0 $ are units because $ (1+(\mu_1-1)e_0)^b = 1+(\mu_1^b-1)e_0 = 1$. The sheaf $ \bo$ is associated to the noncommutative algebra \[ B = ( \prod_{i=0}^{c-1} ke_i )[x,y, z]/(xy-yx, zx-\alpha_xxz, zy-\alpha_yyz) \] with the relations $ e_{i+1}z = ze_i $. By Proposition 3.5, \[ e(H^n_{B}) = j(n) = \sum_{\sum_{i=0}^{c-1} n_i = n, n_i \ge 0} (\prod_{i=0}^{c-1} p_3(n_i))\] We note that $ e(R^c/f) = \frac{e(R^c)}{c} $, as $ f^c$ stabilizes $ R^c$ and $ R^c \to R^c/f$ is etale of degree $ c$. Therefore, the series $ (\sum_{m \ge 0} j(m)q^m)^{e(R^c/f)}$ simplifies to $ M(q)^{e(R^c)}$, completing the proof in the general case.

\begin{cor}
    Let $ Y $ and $ \af $ be the compactification as in 2.2.1. Then the series $ \sum_{n \ge 0} e(\operatorname{Hilb}^n(Y, \af))q^n$ equals $ M(q)^{2e(X)} $. 
\end{cor}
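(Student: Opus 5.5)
We decompose $Y = X/f \times \mathbb{P}^1_{z^d}$ into $Y_0 = X/f \times \mathbb{A}^1_{z^d}$ and the fiber at infinity $X/f \times \{\infty\}$, and reduce everything to Theorem 1.2 applied to $f$ and to $f^{-1}$. Over $Y_\infty = X/f \times \mathbb{A}^1_{w^d}$ the algebra $\af$ restricts to $\mathcal{A}_\infty(f) \cong \mathcal{A}_0(f^{-1})$. By Lemma 2.13 and Corollary 2.10, quotients of $\mathcal{A}_\infty(f)$ correspond to stable triples for $(X,f^{-1})$. Hence $\operatorname{Hilb}^n(Y_\infty,\mathcal{A}_\infty(f)) \cong \mathscr{H}^n_{X,f^{-1}}$. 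Since $f^{-1}$ is again of finite order, Theorem 1.2 gives the generating series $M(q)^{e(X)}$ for both charts.

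To combine the charts we use the $\mathbb{G}_m$-action of Remark 2.20, extended to $\operatorname{Hilb}^n(Y,\af)$ by scaling $z$ (and $w = z^{-1}$ inversely). This is compatible with the gluing, since on the overlap the multiplication is that of the twisted Laurent ring. By Proposition 3.3, $e(\operatorname{Hilb}^n(Y,\af))$ equals the Euler characteristic of the fixed locus. Remark 2.20 shows that a fixed quotient cannot have any support in $X/f \times \mathbb{G}_m$, because the corresponding component of $\phi$ would be an isomorphism.

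Next we split each fixed quotient by its support. A fixed quotient $\mathcal{H}$ is supported in $X/f \times \{0,\infty\}$, so it splits canonically as $\mathcal{H}_0 \oplus \mathcal{H}_\infty$. Here $\mathcal{H}_0$ is supported over $0$ and $\mathcal{H}_\infty$ over $\infty$, and each is a quotient of $\af$ (of $\ao$, respectively $\mathcal{A}_\infty(f)$, locally), with lengths $a+b=n$. This gives a decomposition of the fixed locus, as a constructible set, into pieces
\[ \bigl(\mathscr{H}^a_{X,f}\bigr)^{\mathbb{G}_m}_{0} \times \bigl(\mathscr{H}^b_{X,f^{-1}}\bigr)^{\mathbb{G}_m}_{0}, \]
where the subscript $0$ means support in the zero fiber. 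The fixed points of $\mathscr{H}^a_{X,f}$ already lie over $z^d=0$, again by Remark 2.20. Applying Proposition 3.3 in reverse to each factor, the generating series becomes
\[ \Bigl(\sum_a e(\mathscr{H}^a_{X,f})q^a\Bigr)\Bigl(\sum_b e(\mathscr{H}^b_{X,f^{-1}})q^b\Bigr) = M(q)^{e(X)} \cdot M(q)^{e(X)} = M(q)^{2e(X)}. \]

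The main obstacle is making the splitting into a genuine stratification, as a locally closed (or at least constructible) product decomposition, as in the proof of Theorem 1.2. This requires that a quotient generated by $1$ with support in two disjoint closed sets decomposes uniquely into two generated quotients. Concretely, we must check that the summands $\mathcal{H}_0$ and $\mathcal{H}_\infty$ are each cyclic, generated by the image of $1$. They are, since projection onto a direct summand along a disjoint support is an $\af$-module map. Conversely, any pair of cyclic quotients with disjoint supports glues to a cyclic quotient of the sum; this follows by the Chinese remainder argument, using central idempotents of the Artinian quotient of the center. Once this bijection is established on $\mathbb{C}$-points and shown to be a bijective morphism on strata, the Euler characteristics multiply, and the computation above concludes.
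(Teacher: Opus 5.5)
Your argument is correct and uses the same ingredients as the paper: the $\mathbb{G}_m$-action scaling $\phi$ (the paper's Remark 2.19) to eliminate everything supported in $X/f \times \mathbb{G}_m$, a product decomposition by support over $D_0$ and $D_\infty$, and the identification of the two charts with $\mathscr{H}^a_{X,f}$ and $\mathscr{H}^b_{X,f^{-1}}$, followed by Theorem 1.2 for $f$ and $f^{-1}$. The only difference is the order of steps: the paper first stratifies $\operatorname{Hilb}^n(Y,\af)$ by support and uses Proposition 3.3 only to get $e(Z^c_{Y^{\times}})=0$ for $c \ge 1$ (and likewise inside $\mathscr{H}^a_{X,f}$), whereas you pass to the fixed locus of the whole space first and then apply Proposition 3.3 in reverse to each factor.
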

\begin{proof}
    In $ Y $, let $ D_0 = X/f \times \{0\}$ and $ D_{\infty} = X/f \times \{ \infty \} $ be the divisors at $ 0$ and $ \infty$, and let $ Y^{\times} = X/f \times \mathbb{G}^m$. As before, we can write $ \operatorname{Hilb}^n(Y, \af) $ as $\bigsqcup_{\substack{a,b,c \ge0 \\ a+b+c = n}} Z^a_{D_0} \times Z^b_{D_{\infty}} \times Z^c_{Y^{\times}} $ where $ Z^m_A$ denotes the stratum parametrizing quotients of length $ m$ whose topological support is in $ A$. By Remark 2.19, $ e(Z^c_{Y^{\times}}) = 0$ for all $ c \ge 1$. Moreover, $ e(Z^a_{D_0}) = e(\mathscr{H}^a_{X,f})$ and $ e(Z^b_{D_{\infty}}) = e(\mathscr{H}^b_{X,f^{-1}}) $ which are immediate consequences of Remark 2.19 applied to the decomposition $$ \mathscr{H}^a_{X,f} = \bigsqcup_{a'+a'' = a} Z^{a'}_{D_0} \times Z^{a''}_{Y^{\times}}$$ for example, as locally closed subsets. So \[ \sum_{n \ge 0} e(\operatorname{Hilb}^n(Y, \af))q^n = \left(\sum_{a \ge 0} e(\mathscr{H}^a_{X,f})q^a \right) \left(\sum_{b \ge 0} e(\mathscr{H}^b_{X,f^{-1}})q^b \right) = M(q)^{2e(X)} \]
\end{proof}
\vspace{6pt}
\par
Since all fibrations are locally trivial in the analytic topology, the equality in Theorem 1.2 does not seem to have an obvious motivic refinement (except in the case $ f = id_X$), which is in contrast to the motivic identities in \cite{gottsche2001motive} and \cite{GuseinZade2004PowerStructure}. On the other hand, the degree $0 $ Donaldson-Thomas invariants (see \cite{Maulik2006MNOP2}) are defined for certain surfaces $ X $ and $ f = id_X$, as in that case, the compactification $ \operatorname{Hilb}^n(Y, \af)$ is the Hilbert scheme $ \operatorname{Hilb}^n(X \times \mathbb{P}^1) $. In a future paper, we will define similar invariants in some cases when $ f$ is non-trivial. This will extend the existing similarity in the enumerative theory for threefolds and the enumerative theory for surfaces with an automorphism.

\bibliographystyle{plain} % Defines the layout style
\bibliography{references}

\begin{thebibliography}{10}

\bibitem{beauville1989spectral}
Arnaud Beauville, Mudumbai~S. Narasimhan, and Sundararaman Ramanan.
\newblock Spectral curves and the generalized theta divisor.
\newblock {\em Journal f{\"u}r die reine und angewandte Mathematik (Crelles Journal)}, 1989(398):169--179, 1989.

\bibitem{beentjes2021virtual}
Sjoerd~Viktor Beentjes and Andrea~T. Ricolfi.
\newblock Virtual counts on quot schemes and the higher rank local {DT}/{PT} correspondence.
\newblock {\em Mathematical Research Letters}, 28(4):967--1032, 2021.

\bibitem{BehrendFantechi2008}
Kai Behrend and Barbara Fantechi.
\newblock Symmetric obstruction theories and hilbert schemes of points on threefolds.
\newblock {\em Algebra \& Number Theory}, 2(3):313--345, 2008.

\bibitem{BialynickiBirula1973OnFP}
Andrzej Bia{\l}ynicki-Birula.
\newblock On fixed point schemes of actions of multiplicative and additive groups.
\newblock {\em Topology}, 12(2):99--103, 1973.

\bibitem{bryan2022euler}
Jim Bryan and {\'A}d{\'a}m Gyenge.
\newblock Euler characteristics of hilbert schemes of points on surfaces with simple singularities.
\newblock {\em {\'E}pijournal de G{\'e}om{\'e}trie Alg{\'e}brique}, 6, 2022.

\bibitem{cheah96}
Jan Cheah.
\newblock On the cohomology of {Hilbert} schemes of points.
\newblock {\em Journal of Algebraic Geometry}, 5:479--512, 1996.

\bibitem{drezet2004luna}
Jean-Marc Dr{\'e}zet.
\newblock Luna's slice theorem and applications.
\newblock In Jaros{\l}aw~A. Wi{\'s}niewski, editor, {\em Algebraic group actions and quotients}, pages 39--90. Hindawi Publishing Corporation, 2004.

\bibitem{fantechi2024stack0dimensionalcoherentsheaves}
Barbara Fantechi and Andrea~T. Ricolfi.
\newblock On the stack of 0-dimensional coherent sheaves: structural aspects, 2024.

\bibitem{fantechi2025stack0dimensionalcoherentsheaves}
Barbara Fantechi and Andrea~T. Ricolfi.
\newblock On the stack of 0-dimensional coherent sheaves: motivic aspects, 2025.

\bibitem{fasola2021higher}
Nadir Fasola, Sergej Monavari, and Andrea~T. Ricolfi.
\newblock Higher rank {K}-theoretic {D}onaldson--{T}homas theory of points.
\newblock {\em Forum of Mathematics, Sigma}, 9:e15, 2021.

\bibitem{gottsche2001motive}
Lothar G{\"o}ttsche.
\newblock On the motive of the hilbert scheme of points on a surface.
\newblock {\em Mathematical Research Letters}, 8(5):613--627, 2001.

\bibitem{EGAIII_2}
Alexander Grothendieck and Jean Dieudonn{\'e}.
\newblock {\'E}l{\'e}ments de g{\'e}om{\'e}trie alg{\'e}brique: {III}. {{\'E}}tude cohomologique des faisceaux coh{\'e}rents, {S}econde partie.
\newblock {\em Publications Math{\'e}matiques de l'IH{\'E}S}, 17:5--91, 1963.

\bibitem{GuseinZade2004PowerStructure}
Sabir~M. Gusein-Zade, Ignacio Luengo, and Alejandro Melle-Hern{\'a}ndez.
\newblock A power structure over the {G}rothendieck ring of varieties.
\newblock {\em Mathematical Research Letters}, 11(1):101--107, 2004.

\bibitem{gyenge2018euler}
{\'A}d{\'a}m Gyenge, Andr{\'a}s N{\'e}methi, and Bal{\'a}zs Szendr{\H{o}}i.
\newblock Euler characteristics of {H}ilbert schemes of points on simple surface singularities.
\newblock {\em European Journal of Mathematics}, 4(1):439--524, 2018.

\bibitem{Hartshorne1966}
Robin Hartshorne.
\newblock Connectedness of the hilbert scheme.
\newblock {\em Publications Math{\'e}matiques de l'IH{\'E}S}, 29:5--48, 1966.

\bibitem{huybrechts2010geometry}
Daniel Huybrechts and Manfred Lehn.
\newblock {\em The Geometry of Moduli Spaces of Sheaves}.
\newblock Cambridge Mathematical Library. Cambridge University Press, 2nd edition, 2010.

\bibitem{larsen2012hilbertschemespointsassociative}
Michael Larsen and Valery~A. Lunts.
\newblock Hilbert schemes of points for associative algebras, 2012.

\bibitem{levine2009algebraic}
Marc Levine and Rahul Pandharipande.
\newblock Algebraic cobordism revisited.
\newblock {\em Inventiones mathematicae}, 176(2):243--286, 2009.

\bibitem{Li2006Zero}
Jun Li.
\newblock Zero dimensional {D}onaldson--{T}homas invariants of threefolds.
\newblock {\em Geometry \& Topology}, 10(4):2117--2171, 2006.

\bibitem{lieblich2006remarksstackcoherentalgebras}
Max Lieblich.
\newblock Remarks on the stack of coherent algebras, 2006.

\bibitem{Luna1973Slices}
Domingo Luna.
\newblock Slices {\'e}tales.
\newblock In {\em Sur les groupes alg{\'e}briques}, number~33 in Bulletin de la Soci{\'e}t{\'e} math{\'e}matique de France. M{\'e}moire, pages 81--105. Soci{\'e}t{\'e} math{\'e}matique de France, 1973.

\bibitem{Maulik2006MNOP2}
Davesh Maulik, Nikita Nekrasov, Andrei Okounkov, and Rahul Pandharipande.
\newblock Gromov--{W}itten theory and {D}onaldson--{T}homas theory, {II}.
\newblock {\em Compositio Mathematica}, 142(5):1286--1304, 2006.

\bibitem{pietromonaco2021g}
Stephen Pietromonaco.
\newblock G-invariant {H}ilbert schemes on {A}belian surfaces and enumerative geometry of the orbifold {K}ummer surface.
\newblock {\em Research in the Mathematical Sciences}, 8(4):56, 2021.

\bibitem{simpson1994moduli}
Carlos~T. Simpson.
\newblock Moduli of representations of the fundamental group of a smooth projective variety. {I}.
\newblock {\em Publications Math{\'e}matiques de l'IH{\'E}S}, 79:47--129, 1994.

\bibitem{stacks-project}
The {Stacks project authors}.
\newblock The stacks project.
\newblock \url{https://stacks.math.columbia.edu}, 2026.

\end{thebibliography}
\end{document}